\newtheoremstyle{repeat}{}{}{\itshape}{}{\bfseries}{.}{.5em}{#3, repeated}
\newtheorem{theo}{Theorem}[section]
\newtheorem{lemma}[theo]{Lemma}
\newtheorem{propo}[theo]{Proposition}
\newtheorem{coro}[theo]{Corollary}
\newtheorem{fact}[theo]{Fact}
\theoremstyle{definition}
\newtheorem{question}[theo]{Question}
\newtheorem{exam}[theo]{Example}
\newtheorem{exams}[theo]{Examples}
\newtheorem{rem}[theo]{Remark}
\newtheorem{defi}[theo]{Definition}
\theoremstyle{repeat}
\newtheorem*{repeated-theorem}{Repeat}
\newcommand\Ind{\operatorname{Ind}}
\newcommand\Mod{\operatorname{\bf Mod}}
\newcommand\op{\operatorname{op}}
\newcommand\Set{\operatorname{\bf Set}}
\newcommand\Ab{\operatorname{\bf Ab}}
\newcommand\Bil{\operatorname{\bf Bil}}
\renewcommand\Vec{\operatorname{\bf Vec}}
\newcommand\BinFunc{\operatorname{\bf BinFunc}}
\newcommand\ACF{\operatorname{\bf ACF}}
\newcommand\EAF{\operatorname{\bf EAF}}
\newcommand\Gra{\operatorname{\bf Gra}}
\newcommand\Coalg{\operatorname{\bf Coalg}}
\newcommand\linspan{\operatorname{span}}
\newcommand\lin{\operatorname{lin}}
\newcommand\Mono{\operatorname{Mono}}
\newcommand\Reg{\operatorname{Reg}}
\newcommand\Emb{\operatorname{Emb}}
\newcommand\td{\operatorname{td}}
\newcommand\alg{\operatorname{alg}}
\newcommand\ca{\mathcal {A}}
\newcommand\cc{\mathcal {C}}
\newcommand\cd{\mathcal {D}}
\newcommand\ce{\mathcal {E}}
\newcommand\cl{\mathcal {L}}
\newcommand\cm{\mathcal {M}}
 \newbox\noforkbox \newdimen\forklinewidth
\def\Ind#1#2{#1\setbox0=\hbox{$#1x$}\kern\wd0\hbox to 0pt{\hss$#1\mid$\hss}
\lower.9\ht0\hbox to 0pt{\hss$#1\smile$\hss}\kern\wd0}
\def\ind{\mathop{\mathpalette\Ind{}}}
\def\Notind#1#2{#1\setbox0=\hbox{$#1x$}\kern\wd0\hbox to 0pt{\mathchardef
\nn="3236\hss$#1\nn$\kern1.4\wd0\hss}\hbox to 0pt{\hss$#1\mid$\hss}\lower.9\ht0
\hbox to 0pt{\hss$#1\smile$\hss}\kern\wd0}
\newcommand{\nf}{\ind}
\title{Lifting independence along functors}
\author[M. Kamsma and J. Rosick\'{y}]{M. Kamsma and J. Rosick\'{y}}
\address{
\newline M. Kamsma\newline
Department of Mathematics and Statistics\newline
Masaryk University, Faculty of Sciences\newline
Kotl\'{a}\v{r}sk\'{a} 2, 611 37 Brno, Czech Republic\newline
mark@markkamsma.nl\newline
\newline J. Rosick\'{y}\newline
Department of Mathematics and Statistics\newline
Masaryk University, Faculty of Sciences\newline
Kotl\'{a}\v{r}sk\'{a} 2, 611 37 Brno, Czech Republic\newline
rosicky@math.muni.cz
}
\date{1 September 2025}
\subjclass{03C45 (Primary), 18C35, 03C95 (Secondary)}
\keywords{independence relation, functor, neostability}
\begin{document}

\begin{abstract}
Given a functor $F: \cc \to \cd$ and a model-theoretic independence relation on $\cd$, we can lift that independence relation along $F$ to $\cc$ by declaring a commuting square in $\cc$ to be independent if its image under $F$ is independent. For each property of interest that an independence relation can have we give assumptions on the functor that guarantee the property to be lifted.
\end{abstract}

\maketitle
\begin{center}
Dedicated to Robert Par\'{e} for his 80th birthday.
\end{center}

\tableofcontents

\section{Introduction}
Frequently in mathematics the notion of independence comes up. For example: linear independence, algebraic independence, or probabilistic independence. In model theory, specifically in stability theory, independence plays a central role. Here it takes the form of the abstract notion of forking as developed by Shelah \cite{Sh}, and the aforementioned examples are specific instances of this general theory. Initially, forking was developed for the very well-behaved class of stable theories. In the late nineties Kim and Pillay generalised this work to the class of simple theories, a class that strictly contains the class of stable theories \cite{KP}. Much more recently, Kaplan and Ramsey defined a notion of Kim-forking, allowing for a generalisation to the class of NSOP$_1$ theories \cite{KapRam}, which strictly contains the simple theories. 

While the notions of forking and Kim-forking are heavily syntactic, the independence relation that they yield allows for purely semantic treatment. This goes back to Harnik and Harrington's work \cite{HH} on independence in stable theories, where it is proved that a theory is stable if and only if an independence relation satisfying a certain list of properties exists and that said independence relation is unique. Later, si\-mi\-lar fully semantic treatments of independence were given in simple theories, by Kim and Pillay \cite{KP}, and in NSOP$_1$ theories, by Dobrowolski and the first named author \cite{DK}. Again, we get that a theory is simple (resp.\ NSOP$_1$) if and only if there is an independence relation satisfying one less property (resp.\ two less properties) compared to the stable case, and this independence relation is unique. Such a unique independence relation is called the canonical independence relation.

Nowadays, we use Makkai's notation \cite{M} for independence relations, namely the $\ind$ symbol. For example, we can define the independence relation $\ind^{\lin}$ given by linear independence in vector spaces as follows. If $A, B, C$ are subsets of a vector space $V$, then we write
\[
A \ind^{\lin, V}_C B \quad \Longleftrightarrow \linspan(A \cup C) \cap \linspan(B \cup C) = \linspan(C).
\]
This is indeed the canonical independence relation for vector spaces. That is, the unique independence relation witnessing that the theory is stable. Generally, $A \ind^V_C B$ should be read as ``$A$ is independent from $B$ over $C$ in $V$'', and can intuitively be understood as ``all the information that $B$ has about $A$ is already contained in $C$''.

The semantic treatment of independence relations opened new ways of studying them. In particular, a category-theoretic treatment became possible. As the original motivation for accessible categories was to develop a ``categorical model theory'' \cite{MP}, it should be no surprise that it is the framework of accessible categories in which the categorical treatment of independence relations takes place. In the combined work of \cite{LRV, K, K1}, canonicity of categorical independence is proved, which roughly states that in an accessible category there can be only one independence relation with the properties that independence is known to have in NSOP$_1$, simple or stable theories (see also Fact \ref{canonicity}). These canonicity results tell us what the correct definitions for categorical independence are, allowing for a new categorical approach to independence. For example, in \cite{LRV1} a categorical construction of stable independence is given, and in \cite{KR} the same is done for (potentially unstable) simple and NSOP$_1$-like independence. In this context, it should also be mentioned that substantial work on independence has been done in the context of AECs (e.g., \cite{ShAEC, V, HytKes, BGKV}), which is a different framework from the categorical one, but allows for a direct translation \cite{BR}.

In this paper we contribute to the work on categorical independence by providing various criteria for which properties of an independence relation lift along a functor. This is best illustrated with an example (which is essentially the well-known model-theoretic example of the random graph). Let $F: \Gra_{\Emb} \to \Set_{\Mono}$ be the forgetful functor from graphs with embeddings to sets with monomorphisms. There is a stable independence relation $\ind$ on $\Set_{\Mono}$, which yields an independence relation $F^{-1}(\ind)$ on $\Gra_{\Emb}$ by declaring a commuting square to be independent if its image under $F$ is $\ind$-independent. The question then becomes: what properties of $\ind$ lift to $F^{-1}(\ind)$? In this case the answer will be: all the properties of a stable independence relation, except for one (uniqueness).

\textbf{Main results.} For each property of interest that an independence relation can have we give assumptions on the functor $F$ that guarantee the property to be lifted. For a good number of properties (uniqueness, existence and 3-amalgamation) there are two flavours of assumptions: we can either assume $F$ to be a left multiadjoint, or we can assume something about the image of $F$, like a higher dimensional variant of cofinality. Here cofinality is a property of a functor, which generalises the usual cofinality of a subset of a poset. We then summarise everything in Theorems \ref{lift-everything}, \ref{left-multiadjoint-lifts-everything} and \ref{left-multiadjoint-lifts-simplicity}, which tell us when a stable, simple or NSOP$_1$-like independence relation lifts to an independence relation of similar strength.

\textbf{Overview.} The paper is built up as follows. In Section \ref{sec:prelims} we start with the preliminaries, which mainly consist of the definition of a categorical independence relation. In Section \ref{sec:lifting-independence} we give a precise definition of lifting independence and establish the first results of properties that lift, most notably the lifting of the union and accessibility properties. In the sections after that (\ref{sec:lifting-uniqueness}, \ref{sec:lifting-existence}, \ref{sec:lifting-3-amalgamation} and \ref{sec:lifting-base-monotonicity}) we discuss one property per section and give conditions for when it lifts. Section \ref{sec:strong-3-amalgamation} is an exception, there we discuss a strengthening of the 3-amalgamation property, and show how for simple (and thus in stable) independence relations this stronger property follows from the rest of the properties. Finally, in Section \ref{sec:lifting-everything} we summarise everything in three main theorems.

\section{Preliminaries}
\label{sec:prelims}
We are working in the framework of accessible categories. We assume the reader is familiar with this. Good references for this topic are \cite{AR,MP}.
\begin{defi}
\label{independence-relation}
An \emph{independence relation} $\nf$ on a category $\cc$ is a class of commuting squares.
\[
\xymatrix@=1pc{
  A \ar[r] & M  \\
  C \ar[r] \ar[u] & B \ar[u] &
}
\]
If a commuting square is in the relation, we call it \emph{independent} and write $A \nf_C^M B$.
\end{defi}
We compare this definition to the example of linear independence in the introduction. If we work in the category of vector spaces (over some fixed field) and injective linear maps then a commuting square like the above corresponds to three subspaces $A, B, C \subseteq M$ such that $C \subseteq A \cap B$. So the only difference with the independence relation in the introduction is that $A$, $B$ and $C$ are now subspaces and that $C$ is contained in $A$ and $B$, instead of taking three arbitrary subsets. However, this difference is not substantial and both approaches can be recovered from one another, where in the categorical approach we replace ``arbitrary subsets'' by ``arbitrary subobjects'', see \cite[Remarks 2.10 and 2.11]{KR} for more details.\footnote{The reference deals with subobjects in a fixed category $\cc$, so in our example these would still be subspaces rather than arbitrary subsets. However, the framework of AECats in \cite{K} was set up in a way to also deal with arbitrary subsets.}
\begin{defi}
We list properties of an independence relation:
\begin{description}
\item[Invariance\footnotemark]\footnotetext{When working in the model-theoretically traditional context of a monster model, the ``invariance'' property corresponds to the property that the independence relation is invariant under automorphisms of the monster model, hence the name.} In any commuting diagram like below we have $A \nf_C^M B$ if and only if $A \nf_C^N B$.
\[
\xymatrix@=1pc{
  A \ar[r] & M \ar[r] & N \\
  C \ar[r] \ar[u] & B \ar[u] &
}
\]
\item[Isomorphism] Given two isomorphic commuting squares like below, the inner square is independent if and only if the outer square is independent.
\[
\xymatrix@=1pc{
  A' \ar[rrr] \ar@{=}[dr]|\cong & & & M' \\
  & A \ar[r] & M \ar@{=}[ur]|\cong & \\
  & C \ar[u] \ar[r] & B\ar@{=}[dr]|\cong \ar[u] & \\
  C' \ar[uuu]\ar@{=}[ur]|\cong \ar[rrr] & & & B' \ar[uuu]
}
\]
\item[Monotonicity] In any commuting diagram like below $A \nf_C^M B$ implies $A \nf_C^M B'$.
\[
\xymatrix@=1pc{
  A \ar[rr] & & M \\
  C \ar[r] \ar[u] & B' \ar[r] & B \ar[u]
}
\]
\item[Transitivity] Independent squares can be composed, so in any commuting diagram like below we have that if the two squares are independent then the outer rectangle is independent.
\[
\xymatrix@=1pc{
  A \ar[r] & M \ar[r] & N \\
  C \ar[r] \ar[u] & B \ar[r] \ar[u] & D \ar[u]
}
\]
\item[Symmetry] We have $A \nf_C^M B$ if and only if $B \nf_C^M A$.
\item[Basic existence] Any commuting square with the bottom or left morphism an isomorphism is independent. 
\item[Existence] Any span can be completed to an independent square.
\item[Base Monotonicity\footnotemark]\footnotetext{Base monotonicity might be the hardest to compare to the usual model-theoretic formulation, because we have to work with commuting squares. The usual model-theoretic formulation, where $A, B, C, D$ are just subsets of some model $M$, says that if $A \ind^M_C D$ and $C \subseteq B \subseteq D$ then $A \ind^M_B D$, which is then equivalent to $A \cup B \ind^M_B D$ (modulo some basic properties). In the category-theoretic formulation $A'$ plays the role of $A \cup B$, and $A'$ may only exist in a bigger model $N$.} Given a commuting diagram consisting of the solid arrows below such that $A \nf_C^M D$, there are $A'$ and $N$ and the dashed arrows such that everything commutes and $A' \nf_B^N D$.
\[
\xymatrix@=2pc{
  & A' \ar@{-->}[r] & N \\
  A \ar@{-->}[ur] \ar[rr] & & M \ar@{-->}[u] \\
  C \ar[r] \ar[u] & B \ar[r] \ar@{-->}[uu] & D \ar[u]
}
\]
\item[Uniqueness] Given a commuting diagram consisting of the solid arrows below such that both $A \nf_C^M B$ and $A \nf_C^{M'} B$, there are $N$ and the dashed arrows such that everything commutes.
\[
\xymatrix@=2pc{
  & M \ar@{-->}[r] & N \\
  A \ar[rr] \ar[ur] & & M' \ar@{-->}[u] \\
  C \ar[u] \ar[r] & B \ar[uu] \ar[ur] &
}
\]
\item[$3$-amalgamation] Given a commuting diagram consisting of the solid arrows below (we call this a \emph{horn}) with every square independent (we call this an \emph{$\nf$-independent horn}), there are $N$ and the dashed arrows such that everything commutes and $A \nf_M^N N_3$.
\[
\xymatrix@=2pc{
        & N_2 \ar @{-->}[rr] & & N\\
        A \ar[rr] \ar[ur] &
        & N_1 \ar @{-->}[ur] & \\
        & C \ar'[u][uu] \ar'[r][rr] & & N_3 \ar@{-->} [uu] & \\
        M \ar[ur]\ar[rr]\ar[uu] & & B \ar[ur]\ar[uu] &  
      }
\]
\end{description}
\end{defi}
Note that isomorphism follows from basic existence and transitivity. Also, basic existence follows from existence and invariance \cite[Lemma 3.12]{LRV}.

Given a category $\cc$ we write $\cc^2$ for the category that has as objects morphisms from $\cc$ and as morphisms the commuting squares in $\cc$. For an independence relation $\nf$ on $\cc$ that satisfies basic existence and transitivity we then write $\cc_{\nf}$ for the subcategory of $\cc^2$ where the morphisms are independent squares.
\begin{defi}
We define two more properties for an independence relation $\nf$, which are properties of $\cc_{\nf}$ so we need to assume basic existence and transitivity.
\begin{description}
\item[Accessible] The category $\cc_{\nf}$ is accessible.
\item[Union] The category $\cc_{\nf}$ has directed colimits and these are preserved by the inclusion functor $\cc_{\nf} \hookrightarrow \cc^2$.
\end{description}
\end{defi}
We have now defined all the properties of independence relations that we will consider. As discussed in the introduction, each of the model-theoretic classes of stable, simple and NSOP$_1$ can be characterised by the list of properties that the canonical independence relation has. We now recall this characterising list of properties for each of these classes, and we will name an independence relation satisfying such a list after the class that it characterises.
\begin{defi}
\label{stable-simple-nsop1-independence}
An independence relation $\nf$ is called
\begin{description}
\item[NSOP$_1$-like] If it is accessible and satisfies invariance, monotonicity, transitivity, symmetry, existence, 3-amalgamation and union.
\item[Simple] If it is NSOP$_1$-like and satisfies  base monotonicity.
\item[Stable] if it is simple and satisfies uniqueness.
\end{description}
\end{defi}
We also recall a simplified version of the framework of \emph{AECats} (\emph{Abstract Elementary Categories}) from \cite{K}, as this is the categorical framework that we want to work in. In \cite{K} these are defined as a pair of categories $(\cc, \cm)$, but will only be interested in the case $\cc = \cm$, hence the simplification.
\begin{defi}
An \emph{AECat} is an accessible category with directed colimits and where all morphisms are monomorphisms.
\end{defi}
Normally, we are only interested in AECats with the amalgamation property (i.e., any span of morphisms can be completed to a commuting square). In the main results we will only deal with AECats that have an independence relation that satisfies existence, and so the amalgamation property is automatic.
\begin{fact}[{Canonicity of categorical independence, \cite{LRV, K, K1}}]
\label{canonicity}
Let $\cc$ be an AECat and let $\ind$ and $\ind'$ be independence relations on $\cc$, and suppose that $\ind$ is NSOP$_1$-like.
\begin{enumerate}
\item If $\ind'$ is simple then $\ind = \ind'$.
\item If $\cc$ satisfies the technical assumption ``existence axiom'' and $\ind'$ is NSOP$_1$-like then $\ind = \ind'$.
\end{enumerate}
\end{fact}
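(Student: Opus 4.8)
The plan is to establish the equality $\ind = \ind'$ by proving the two inclusions of classes of commuting squares separately. Observe first that in both parts both relations are NSOP$_1$-like: in part (1) this is because simplicity is by definition NSOP$_1$-like together with base monotonicity. So the task in each case is to show that two a priori different NSOP$_1$-like relations coincide, with the extra hypothesis (base monotonicity in (1), the existence axiom on $\cc$ in (2)) reserved for the one delicate inclusion. The engine throughout is the 3-amalgamation property, while the accessibility and union properties are what make the transfinite constructions below legitimate and allow their directed colimits to be recognized as independent squares; symmetry, monotonicity and transitivity are the soft glue used to manipulate configurations.

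For the first inclusion --- the analogue of ``independence implies non-dividing'' in the Kim--Pillay argument \cite{KP, DK} --- I would take a square that is $\ind'$-independent and use monotonicity, transitivity and symmetry of $\ind'$ to iterate it into an arbitrarily long $\ind'$-independent, indiscernible sequence (a Morley-type sequence) over the base. The point is that such a sequence witnesses that the configuration cannot divide, and hence --- via the characterization of the canonical relation through independent sequences --- must already be $\ind$-independent. This direction consumes only the soft axioms together with union (to take the colimit of the chain) and accessibility (to locate the indiscernible sequence inside $\cc_{\ind'}$), and is essentially symmetric between the two relations.

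The converse inclusion is the hard one and is where 3-amalgamation is indispensable. Given, say, $A \ind_C B$, I would build an $\ind$-independent Morley sequence and then use the independence theorem, i.e. the 3-amalgamation property of $\ind'$, to amalgamate two copies of the relevant configuration over the base and thereby realize the amalgam witnessing $A \ind'_C B$. It is exactly here that the hypotheses of (1) and (2) diverge. With base monotonicity available (the simple case), ordinary Morley sequences over the base behave well enough that the amalgamation goes through unconditionally, so no assumption on $\cc$ is needed. Without base monotonicity (the pure NSOP$_1$ case) one must instead work with Kim-Morley sequences, built from suitably invariant or finitely-satisfiable expansions of the base, and the ``existence axiom'' on $\cc$ is precisely the hypothesis guaranteeing that the associated notion of Kim-dividing is well defined and symmetric, which is what lets the 3-amalgamation step succeed.

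The main obstacle I anticipate is not any single inclusion in isolation but the construction of these indiscernible and (Kim-)Morley sequences purely inside an accessible category, where compactness is unavailable and one must substitute the presentability/accessibility machinery, and then the verification that the square produced by 3-amalgamation is genuinely a member of the independence relation rather than merely a commuting amalgam. Pushing independence through the directed colimit of a long chain is exactly what the union property is for, and the most subtle point separating part (2) from part (1) is the well-definedness of Kim-dividing in the absence of base monotonicity, which is why the existence axiom cannot be dropped there. For these reasons the full argument is genuinely distributed across \cite{LRV, K, K1}, and the sketch above is the strategy one would assemble from them rather than a self-contained derivation.
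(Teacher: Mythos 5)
The first thing to say is that the paper contains no proof of this statement: it is presented as a \emph{Fact}, quoted from \cite{LRV, K, K1}, so there is no in-paper argument to compare yours against --- the comparison has to be with the proofs in those references. At that level your sketch does capture the right architecture: canonicity there is indeed proved Kim--Pillay style \cite{KP, DK}, by comparing each of $\ind$ and $\ind'$ to a third, combinatorially defined relation (non-dividing witnessed by long independent sequences), with 3-amalgamation carrying the hard direction and union/accessibility legitimising the transfinite constructions, and you correctly locate base monotonicity versus the existence axiom as the point where (1) and (2) diverge.

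As a proof, however, the sketch has a genuine gap at its central device. Indiscernible sequences and Morley sequences in the model-theoretic sense are simply not available in a general AECat: without compactness there is no way to extract indiscernibles, and accessibility does not ``locate'' them inside $\cc_{\ind'}$. This is precisely the problem the cited papers exist to solve --- \cite{K, K1} replace indiscernibles by \emph{isi-sequences} (initial-segment-invariant sequences) and replace dividing by \emph{isi-dividing}, and the canonicity results are obtained by showing that any simple (respectively, under the existence axiom, any NSOP$_1$-like) independence relation coincides with the relation of non-isi-dividing along sufficiently long such sequences. Your steps ``iterate into an indiscernible sequence'' and ``hence must already be $\ind$-independent'' therefore name exactly the content that has to be reinvented in the accessible-category setting, rather than supplying it; likewise your account of the existence axiom (well-definedness and symmetry of Kim-dividing) is an extrapolation from the first-order/positive-logic picture rather than the actual formulation in \cite[Definition 6.14]{K1}, where it plays the role of the forking-existence axiom needed to run the long-sequence constructions over arbitrary bases. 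You acknowledge all this in your final paragraph, and that self-assessment is accurate: what you have is a faithful strategic outline of \cite{LRV, K, K1}, not a derivation, and the paper itself never attempts one either.
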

The interested reader can refer to \cite[Definition 6.14]{K1} for the definition of the ``existence axiom'', but it is not important for this paper. The point of the above fact is to recall the precise statement of canonicity of categorical independence, giving context to Definition \ref{stable-simple-nsop1-independence}.

We finish this section by recalling some notation.
\begin{defi}
\label{class-of-morphisms-notation}
Let $\cm$ be a class of morphisms in a category $\cc$.
\begin{itemize}
\item If $\cm$ is closed under composition and contains all isomorphisms we call it \emph{composable}.
\item If $gf \in \cm$ implies that $f \in \cm$ we call it \emph{left-cancellable}.
\item If $gf, g \in \cm$ implies that $f \in \cm$ we call it \emph{coherent}.
\end{itemize}
For a composable class of morphisms we write $\cc_\cm$ for the subcategory of $\cc$ whose objects are all of those in $\cc$ and whose morphisms are precisely those from $\cm$.
\begin{itemize}
\item We call $\cm$ \emph{continuous} if $\cc_\cm$ is closed under directed colimits in $\cc$.
\item We call $\cm$ \emph{accessible} if $\cc_\cm$ is accessible.
\end{itemize}
\end{defi}
An example of a left-cancellable composable class is the class $\Mono$ of all monomorphisms in $\cc$. Another example is the class $\Reg$ of regular monomorphisms in a coregular category. We will usually be interested in the case where $\cm$ consists of monomorphisms, by which we mean $\cm \subseteq \Mono$.

\section{Lifting independence}
\label{sec:lifting-independence}
We start this section by discussing two motivating examples that we will revisit throughout the paper. There will be more examples, but most often we will refer to these two.
\begin{exams}\label{motivating-examples}
These two examples are based on \cite{K2} and \cite{AHKK}.
\begin{enumerate}
\item Fix a field $K$ and let $\Bil_K$ be the category of bilinear spaces over $K$, whose morphisms are the injective linear maps that preserve the bilinear form. If the reader wishes, they may also restrict to symmetric or alternating bilinear spaces (and everything goes through in the same way). Let $\Vec_K$ be the category of vector spaces over $K$ with injective linear maps. In both categories, the canonical independence relation $\ind^{\lin}$ is given by linear independence. As discussed right after Definition \ref{independence-relation}, we can simplify the definition of linear independence from the introduction by only considering subspaces $A, B, C$ of some bilinear/vector space $V$ with $C \subseteq A \cap B$---i.e., those forming a commuting square in $\Vec_K$ or $\Bil_K$. We then have $A \ind^{\lin, V}_C B$ if and only if $A \cap B = C$. This relation is stable on $\Vec_K$ and is simple\footnote{There is a well-known model-theoretic example that bilinear spaces over infinite fields are never simple, when considered as a two-sorted structure (they can be NSOP$_1$, e.g., when the field is algebraically closed). We emphasise that in our example the field is fixed, which is exactly what gives us simplicity. For more details, we refer to \cite{K2}.} unstable on $\Bil_K$. Writing $F: \Bil_K \to \Vec_K$ for the obvious forgetful functor, we thus see that a commuting square in $\Bil_K$ is independent if and only if its image under $F$ is independent in $\Vec_K$.
\item An exponential field is a field $F$ (of characteristic 0) together with a group homomorphism $\exp: F^+ \to F^{\times}$ from the additive group to the multiplicative group. Such an exponential field is called an \emph{EA-field} if it is algebraically closed as a field.

Recall that for algebraically closed fields the canonical independence relation $\ind^{\td}$ is given by algebraic independence. That is, for subsets $A, B, C$ of an algebraically closed field $F$ we have $A \ind^{\td, F}_C B$ if for every finite tuple $\bar{a}$ from $A$ we have $\td(\bar{a}/C) = \td(\bar{a}/B \cup C)$, where $\td$ is the transcendence degree. This independence relation is stable. Given an EA-field $F$ and a subset $A \subseteq F$ we write $\langle A \rangle^{EA}_F$ for the smallest EA-subfield of $F$ containing $A$. The canonical independence relation $\ind^{EA}$ for EA-fields is then given by $A \ind^{EA, F}_C B$ if and only if $\langle A \cup C \rangle^{EA}_F \ind^{\td, F}_{\langle C \rangle^{EA}_F} \langle B \cup C \rangle^{EA}_F$ for arbitrary subsets $A, B, C \subseteq F$. So after applying a closure operation, we just look at the usual algebraic independence. The independence relation on EA-fields is NSOP$_1$-like non-simple.

We consider the following two categories: $\ACF$ is the category of algebraically closed fields of characteristic 0 and $\EAF$ is the category of EA-fields. The morphisms are simply embeddings in both cases. Writing $F: \EAF \to \ACF$ for the underlying field functor, we can reformulate the above to saying that a commuting square in $\EAF$ is independent if and only if its image under $F$ is independent in $\ACF$. In this formulation we only look at squares in $\EAF$, and therefore the step where we would have to take the $\langle - \rangle^{EA}$-closure trivialises. That is, for EA-subfields $A, B, C \subseteq F$ with $C \subseteq A \cap B$ the independence relation on EA-fields simplifies to $A \ind^{EA, F}_C B$ if and only if $A \ind^{\td, F}_C B$.

We briefly note that instead of EA-fields, we could also look at ELA-fields where the kernel of the exponential map is of a fixed isomorphism type. An ELA-field is an EA-field whose exponential map is surjective. While these are interesting objects to study, they add little value as an example in this paper, so we will only consider EA-fields and refer the interested reader to \cite{AHKK}.
\end{enumerate}
\end{exams}
In each case we see that the independence relation on the more complicated objects can be computed in terms of the simpler objects. Viewed differently: the independence relation on the more complicated objects arises by lifting the independence relation on the simpler objects along a forgetful functor.
\begin{defi}\label{lifting-independence}
Given a functor $F: \cc \to \cd$ and an independence relation $\nf$ on $\cd$, we define an independence relation $F^{-1}(\nf)$ on $\cc$, the \emph{lift} of $\nf$, by declaring a commuting square in $\cc$ to be $F^{-1}(\nf)$-independent if and only if its image in $\cd$ is $\nf$-independent:
\[
\vcenter{\vbox{\xymatrix@=3pc{
  A \ar[r] & D \\
  C \ar[u] \ar@{}[ur]|-{F^{-1}(\nf)} \ar[r] & B \ar[u]
}}}
\quad \Longleftrightarrow \quad
\vcenter{\vbox{\xymatrix@=3pc{
  F(A) \ar[r] & F(D) \\
  F(C) \ar[u] \ar@{}[ur]|-{\nf} \ar[r] & F(B) \ar[u]
}}}
\]
\end{defi}
\begin{propo}\label{lifting-basic-properties}
Let $F: \cc \to \cd$ be a functor and $\nf$ an independence relation on $\cd$. Then the following properties are lifted to $F^{-1}(\nf)$ on $\cc$: invariance, monotonicity, transitivity, symmetry and basic existence. That is, if $\nf$ has one of these properties then $F^{-1}(\nf)$ has that property.
\end{propo}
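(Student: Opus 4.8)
The plan is to observe that all five properties lift by essentially the same formal argument, driven entirely by functoriality of $F$ together with the defining equivalence of $F^{-1}(\nf)$. Since $F$ is a functor, it sends any commuting diagram in $\cc$ to a commuting diagram of the same shape in $\cd$ (it preserves composites, identities, and hence the pasting of squares into rectangles), and by Definition \ref{lifting-independence} a square in $\cc$ is $F^{-1}(\nf)$-independent precisely when its $F$-image is $\nf$-independent. So for each property I would take the witnessing diagram in its hypothesis, apply $F$ to transport the whole diagram into $\cd$, invoke the corresponding property of $\nf$ there, and read the conclusion back through the definition of $F^{-1}(\nf)$.

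Concretely, for \textbf{invariance}, \textbf{monotonicity}, \textbf{transitivity}, and \textbf{symmetry}, the diagrams in the respective statements consist only of squares (together, in the case of transitivity, with the rectangle obtained by pasting two squares). Applying $F$ yields a diagram of exactly the same shape in $\cd$: commutativity is preserved, and in the transitivity case the two abutting squares map to two abutting squares whose composite rectangle is the $F$-image of the original rectangle, precisely because $F$ preserves composition. Translating each square through the defining equivalence, the hypothesis of the property in $\cc$ becomes the hypothesis of the same property for $\nf$ in $\cd$, and the conclusion for $\nf$ becomes the desired conclusion back in $\cc$. For \textbf{basic existence} the only extra ingredient is that a functor preserves isomorphisms: if the bottom or left edge of a commuting square in $\cc$ is an isomorphism, so is the corresponding edge of its $F$-image, whence that image is $\nf$-independent and the original square is $F^{-1}(\nf)$-independent.

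I do not expect any genuine obstacle: each step is a direct consequence of preservation of composition, identities, and isomorphisms. The only point to keep straight is the bookkeeping in transitivity---verifying that the composite of the two $F$-images really is the $F$-image of the composite rectangle---but this is exactly functoriality of $F$ on the relevant morphisms.
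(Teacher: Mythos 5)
Your proof is correct and matches the paper's approach: the paper simply states that each property ``follows easily and quickly from the definitions,'' and your argument---applying $F$ to the witnessing diagram, invoking the property of $\nf$ in $\cd$, and translating back through Definition \ref{lifting-independence}, with preservation of isomorphisms handling basic existence---is exactly the routine verification being alluded to.
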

\begin{proof}
Each of these properties follows easily and quickly from the definitions.
\end{proof}
\begin{theo}\label{lifting-union-accessibility}
Let $\cc$ and $\cd$ be accessible categories having directed colimits and let $F: \cc \to \cd$ be a directed colimit preserving functor. Suppose that $\nf$ is an independence relation on $\cd$ that satisfies basic existence and transitivity.
\begin{enumerate}
\item If $\nf$ satisfies union then so does $F^{-1}(\nf)$.
\item If $\nf$ satisfies union and is accessible then the same holds for $F^{-1}(\nf)$.
\end{enumerate}
\end{theo}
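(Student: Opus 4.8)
The plan is to route everything through the functor on arrow categories. Applying $F$ objectwise gives a functor $F^2 \colon \cc^2 \to \cd^2$ that sends a commuting square in $\cc$ to its image in $\cd$, and by the definition of the lift a square is $F^{-1}(\nf)$-independent exactly when $F^2$ carries it into $\cd_\nf$. Since $\nf$ has basic existence and transitivity, Proposition \ref{lifting-basic-properties} gives the same for $F^{-1}(\nf)$, so $\cd_\nf$ and $\cc_{F^{-1}(\nf)}$ are legitimately defined as wide subcategories of $\cd^2$ and $\cc^2$; moreover $\cc_{F^{-1}(\nf)}$ is precisely the (strict) pullback of the inclusion $\cd_\nf \hookrightarrow \cd^2$ along $F^2$. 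Two facts underpin both parts: directed colimits in $\cc^2$ and $\cd^2$ are computed objectwise from those in $\cc$ and $\cd$, and $F^2$ preserves them because $F$ does.

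For (1), I would take a directed diagram $D$ in $\cc_{F^{-1}(\nf)}$ and form its colimit cocone $(c,(\iota_i))$ in $\cc^2$, which exists by the objectwise computation. Post-composing $D$ with $F^2$ yields a directed diagram in $\cd_\nf$, and since $F^2$ preserves directed colimits, $(F^2 c,(F^2\iota_i))$ is its colimit in $\cd^2$. The union property for $\nf$ says this is also its colimit in $\cd_\nf$, and in particular that each leg $F^2\iota_i$ is $\nf$-independent; hence each $\iota_i$ is $F^{-1}(\nf)$-independent and the cocone $(c,(\iota_i))$ lives in $\cc_{F^{-1}(\nf)}$. To verify it is a colimit there, I would take a competing cocone $(d,(\phi_i))$ in $\cc_{F^{-1}(\nf)}$, let $u \colon c \to d$ be the unique mediating morphism in $\cc^2$, apply $F^2$, and compare with the mediating morphism supplied by the universal property of the colimit in $\cd_\nf$, which is $\nf$-independent. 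As both are mediating morphisms in $\cd^2$, where the mediator is unique, $F^2 u$ is forced to be $\nf$-independent; thus $u$ is a morphism of $\cc_{F^{-1}(\nf)}$ and is the unique such mediator. This is exactly the assertion that $\cc_{F^{-1}(\nf)}$ has directed colimits and that the inclusion into $\cc^2$ preserves them.

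For (2), I would exhibit $\cc_{F^{-1}(\nf)}$ as a pullback of accessible categories along accessible functors and invoke the stability of accessibility under such limits. The arrow categories $\cc^2$ and $\cd^2$ are accessible (functor categories from a finite category into an accessible category), $F^2$ is accessible since it preserves directed colimits, and by the union property used above the inclusion $\cd_\nf \hookrightarrow \cd^2$ is accessible as well ($\cd_\nf$ is accessible by hypothesis and the inclusion preserves directed colimits). The delicate point is that $\cc_{F^{-1}(\nf)}$ is the strict pullback, whereas it is the pseudopullback that the Makkai--Par\'e calculus of accessible $2$-limits guarantees to be accessible (see \cite{MP, AR}). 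Here I would use that $\cd_\nf \hookrightarrow \cd^2$ is an isofibration: by basic existence every isomorphism of $\cd^2$ is an $\nf$-independent square, so all isomorphisms of $\cd^2$ lie in $\cd_\nf$. Consequently the comparison functor from the strict pullback to the pseudopullback is an equivalence, and since accessibility is invariant under equivalence, $\cc_{F^{-1}(\nf)}$ is accessible.

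The main obstacle I anticipate lies entirely in part (2): making the passage from the strict pullback (what $\cc_{F^{-1}(\nf)}$ literally is) to the pseudopullback (what the accessibility-of-limits theorem applies to) fully rigorous. This rests on verifying that $\cd_\nf \hookrightarrow \cd^2$ is genuinely an isofibration, for which basic existence is exactly the right input, and on citing the correct form of the theorem that pseudopullbacks of accessible categories along accessible functors are accessible. Part (1), by contrast, is a direct diagram chase once one observes that everything is computed objectwise and that $F^2$ preserves directed colimits.
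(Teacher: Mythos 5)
Your proposal is correct, and roughly half of it coincides with the paper's own argument. For part (2) you do exactly what the paper does: realise $\cc_{F^{-1}(\nf)}$ as the strict pullback of $\cd_{\nf} \hookrightarrow \cd^2$ along $F^2$, use basic existence to see that this inclusion is an isofibration (so that the strict pullback agrees, up to equivalence, with the pseudopullback, per Joyal--Street), and then invoke closure of accessible categories under pseudopullbacks. Where you genuinely diverge is part (1): the paper handles union by the same 2-categorical device, namely that the 2-category of categories with directed colimits and directed-colimit-preserving functors is closed under pseudopullbacks, so once the cospan $\cc^2 \xrightarrow{F^2} \cd^2 \hookleftarrow \cd_{\nf}$ lives in that 2-category, so does the whole pullback diagram. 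You instead verify union by a direct diagram chase: objectwise computation of colimits in arrow categories, transport of the colimit cocone into $\cd_{\nf}$, and comparison of mediating morphisms. Your route is more elementary and self-contained, avoiding the 2-categorical closure fact entirely, at the cost of length; the paper's route settles (1) and (2) uniformly, one pseudopullback principle per part. One step in your part (1) deserves to be made explicit: union literally provides a colimit cocone computed \emph{in} $\cd_{\nf}$, and to conclude that your specific cocone $(F^2 c, (F^2 \iota_i))$ is also a colimit in $\cd_{\nf}$ --- in particular that its legs are $\nf$-independent --- you must compose with the comparison isomorphism, which is an independent square by basic existence, and then apply transitivity. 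This transport is exactly where those two hypotheses enter part (1), so the gloss is harmless, but it should be spelled out.
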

\begin{proof}
The functor $F$ induces a directed colimit preserving functor $F^2: \cc^2 \to \cd^2$. We thus have a pullback
\[
\xymatrix@=2pc{
  \cc^2 \ar[r]^{F^2} & \cd^2 \\
  \cc_{F^{-1}(\nf)} \ar@{^{(}->}[u] \ar@{}[ur] \ar[r] & \cd_{\nf} \ar@{^{(}->}[u]
}
\]
By basic existence, the inclusion functor $\cd_{\nf} \hookrightarrow \cd^2$ is an isofibration. Therefore, the pullback is a pseudopullback (see, e.g., \cite[Theorem 1]{JS}).

The 2-category of categories with directed colimits and directed colimit preserving functors is closed under pseudopullbacks. So if $\nf$ satisfies union then the cospan $\cc^2 \xrightarrow{F^2} \cd^2 \hookleftarrow \cd_{\nf}$ lives in this 2-category, so the entire diagram lives in this 2-category. We conclude that $F^{-1}(\nf)$ has union.

If $\nf$ is in addition also accessible then we can use the fact that accessible categories are closed under pseudopullbacks (see \cite[Theorem 5.1.6]{MP} or \cite[Exercise 2.n]{AR}) to conclude that $F^{-1}(\nf)$ is accessible.
\end{proof}
Proposition \ref{lifting-basic-properties} and Theorem \ref{lifting-union-accessibility} apply to both forgetful functors in Examples \ref{motivating-examples}.

\section{Lifting uniqueness}
\label{sec:lifting-uniqueness}
\begin{defi}\label{amalgamation-of-squares}
We say that two commuting squares that share the same base span, such as the solid arrows in the diagram below, can be \emph{amalgamated} if there are $E$ and the dotted arrows as in the diagram below that make everything commute.
\[
\xymatrix@=1.8pc{
    & D \ar@{{}*{\cdot}>}[r] & E \\
    B \ar[ru] \ar[rr] & & D' \ar@{{}*{\cdot\!}>}[u] \\
    A \ar[u] \ar[r] & C \ar[uu]  \ar[ur] & \\
}
\]
\end{defi}
In this terminology, the uniqueness property for an independence relation $\nf$ can be reformulated by saying that any two $\nf$-independent squares that share the same base span can be amalgamated.
\begin{defi}\label{reflecting-amalgamation-of-squares}
A functor $F: \cc \to \cd$ is said to \emph{reflect amalgamation of squares} if whenever we have two squares in $\cc$ that share the same base span and their images under $F$ can be amalgamated in $\cd$ then the original squares can already be amalgamated in $\cc$.
\[
\vcenter{\vbox{\xymatrix@=1.5pc{
    & F(D) \ar@{{}*{\cdot}>}[r] & E \\
    F(B) \ar[ru] \ar[rr] & & F(D') \ar@{{}*{\cdot\!}>}[u] \\
    F(A) \ar[u] \ar[r] & F(C) \ar[uu]  \ar[ur] & \\
}}}
\quad \rightsquigarrow \quad
\vcenter{\vbox{\xymatrix@=1.5pc{
    & D \ar@{{}*{\cdot}>}[r] & H \\
    B \ar[ru] \ar[rr] & & D' \ar@{{}*{\cdot\!}>}[u] \\
    A \ar[u] \ar[r] & C \ar[uu]  \ar[ur] & \\
}}}
\]
\end{defi}
The following is an immediate consequence of the definitions.
\begin{propo}\label{reflecting-amalgamation-of-squares-lifts-uniqueness}
Suppose that $F: \cc \to \cd$ is a functor that reflects amalgamation of squares and that $\nf$ is an independence relation on $\cd$ satisfying uniqueness, then $F^{-1}(\nf)$ satisfies uniqueness.
\end{propo}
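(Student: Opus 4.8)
The plan is to lean on the reformulation of uniqueness stated right after Definition \ref{amalgamation-of-squares}: an independence relation satisfies uniqueness precisely when any two of its independent squares that share the same base span can be amalgamated. This turns the whole task into a three-step translation back and forth between $\cc$ and $\cd$.

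First I would start with two $F^{-1}(\nf)$-independent squares in $\cc$ sharing the same base span, say $A \leftarrow C \to B$, with apexes $M$ and $M'$. By the definition of the lift (Definition \ref{lifting-independence}), applying $F$ produces two squares in $\cd$ whose shared base span is $F(A) \leftarrow F(C) \to F(B)$ and whose apexes are $F(M)$ and $F(M')$, and these image squares are $\nf$-independent by that same definition. Since $\nf$ satisfies uniqueness, the reformulation tells us that these two $\nf$-independent image squares, sharing a common base span, can be amalgamated in $\cd$; equivalently, the $F$-images of the two original squares admit an amalgamation.

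Finally, because $F$ reflects amalgamation of squares (Definition \ref{reflecting-amalgamation-of-squares}), the existence of an amalgamation of the $F$-images forces an amalgamation of the two original squares back in $\cc$. Reading this through the reformulation one last time, an amalgamation of the two $F^{-1}(\nf)$-independent squares sharing a base span is exactly the uniqueness property for $F^{-1}(\nf)$, so we are done.

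I expect no genuine obstacle here—this is why the statement is billed as an immediate consequence, the argument being nothing more than chaining the three relevant definitions (the lift, the amalgamation reformulation of uniqueness, and reflection of amalgamation of squares). The only point that deserves a second of care is the bookkeeping: one should note that $F$, being a functor, sends a shared base span to a shared base span, so that the input required by the hypothesis ``reflects amalgamation of squares'' is literally present.
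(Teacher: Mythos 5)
Your argument is correct and is exactly the one the paper intends: the paper labels this proposition an immediate consequence of the definitions, and your chain (reformulate uniqueness as amalgamation of independent squares over a common base span, push through $F$ via the definition of the lift, amalgamate in $\cd$ by uniqueness of $\nf$, then pull the amalgamation back via reflection of amalgamation of squares) is precisely that intended unpacking.
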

\begin{exam}\label{bilinear-spaces-jointly-reflecting-amalgamation-of-squares}
In addition to the forgetful functor $F: \Bil_K \to \Vec_K$ from Example \ref{motivating-examples}(1), we also consider the following functor. Let $\BinFunc_K$ be the category whose objects are sets $X$ with a binary function $X \times X \to K$ into $K$, and the morphisms are injections of these sets that respect the binary function, then we take $G: \Bil_K \to \BinFunc_K$ to be the functor that forgets about the vector space structure. Neither $F$ nor $G$ reflects amalgamation of squares, but $\langle F, G \rangle: \Bil_K \to \Vec_K \times \BinFunc_K$ does.
\end{exam}
The situation in the above example where we really have to consider a product of categories happens often in practice, so it will be useful to give this a name.
\begin{defi}\label{jointly-reflecting-amalgamation-of-squares}
We say that a family of functors $\{F_j: \cc \to \cd_j\}_{j \in J}$ \emph{jointly reflect amalgamation of squares} if $\langle F_j \rangle_{j \in J}: \cc \to \prod_{j \in J} \cd_j$ reflects amalgamation of squares.
\end{defi}
Note that the above condition is further equivalent to saying that two squares in $\cc$ can be amalgamated if and only if their images under each $F_j$ can be amalgamated.
\begin{rem}\label{reflecting-amalgamation-of-squares-vs-types}
We compare the amalgamation of squares to the model-theoretic notion of (Galois) types (see e.g., \cite[Section 3]{K}). This remark is not used anywhere else in this paper, so the reader may choose to skip it.

We recall that two tuples of morphisms $(a_i: A_i \to D)_{i \in I}$ and $(a'_i: A_i \to D')_{i \in I}$ have the same Galois type if they can be amalgamated, in the sense that there are $D \xrightarrow{f} E \xleftarrow{g} D'$ such that $fa_i = ga'_i$ for all $i \in I$. If a category has the amalgamation property this is an equivalence relation. Being able to amalgamate two squares is then a special case of having the same Galois type.

In the model-theoretic context, having the same Galois type often admits a syntactic description. For example, if the category in question is the category $\Mod(T)$ of models of a first-order theory $T$ with elementary embeddings then two tuples of morphisms have the same Galois type if and only if we can enumerate their images such that these enumerations satisfy the same first-order formulas (such a set of formulas satisfied by a tuple is called a \emph{type}). This works similarly for categories of models in some other logics, such as positive logic \cite{BY} or continuous logic \cite{BYBHU}. Note that these categories always have the amalgamation property.

We can thus define a functor $F$ between categories with the amalgamation property to be \emph{Galois type reflecting} if two tuples of morphisms have the same Galois type if and only if their images under $F$ have the same Galois type. This is more general than reflecting amalgamation of squares, but is in practice sometimes easier to check. Example \ref{bilinear-spaces-jointly-reflecting-amalgamation-of-squares} can be cast in this language: take two tuples of monomorphisms of bilinear spaces $(a_i: A_i \to D)_{i \in I}$ and $(a'_i: A_i \to D')_{i \in I}$, and view each $a_i$ as a tuple enumerating its image in $D$ (similarly for the $a'_i$). Then these can be amalgamated in $\Bil_K$ if and only if:
\begin{enumerate}
\item $\operatorname{span}(a_i : i \in I) \cong \operatorname{span}(a'_i : i \in I)$, that is, $(a_i)_{i \in I}$ and $(a'_i)_{i \in I}$ satisfy the same $K$-linear equations;
\item $[x, y] = [x', y']$ for all $x,y$ singletons in the concatenated tuple $(a_i)_{i \in I}$ and $x',y'$ of matching positions in $(a'_i)_{i \in I}$, where $[-, -]$ is the $K$-bilinear form on $D$ (and on $D'$, by overloading notation).
\end{enumerate}
Model-theoretically we can say that the type of a tuple is determined by the type of that tuple in the underlying vector space together with the type of that tuple with respect to the bilinear form (seen as just a binary function).
\end{rem}
\begin{coro}\label{jointly-reflecting-amalgamation-of-squares-lifts-uniqueness}
Suppose that $\{F_j: \cc \to \cd_j\}_{j \in J}$ is a family of functors that jointly reflect amalgamation of squares and suppose that $\nf^j$ is an independence relation on each $\cd_j$ satisfying uniqueness. Let $\bigcap_{j \in J} F_j^{-1}(\nf^j)$ be the independence relation on $\cc$ where a square is independent if it is $F_j^{-1}(\nf^j)$-independent for all $j \in J$. Then $\bigcap_{j \in J} F_j^{-1}(\nf^j)$ satisfies uniqueness.
\end{coro}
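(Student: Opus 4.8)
The plan is to reduce to Proposition \ref{reflecting-amalgamation-of-squares-lifts-uniqueness} by exhibiting a single functor together with a single independence relation whose lift recovers $\bigcap_{j \in J} F_j^{-1}(\nf^j)$. The obvious candidate for the functor is $F = \langle F_j \rangle_{j \in J} \colon \cc \to \prod_{j \in J} \cd_j$, which reflects amalgamation of squares precisely by the hypothesis (Definition \ref{jointly-reflecting-amalgamation-of-squares}). For the independence relation on $\prod_{j \in J} \cd_j$ I would take the product relation $\prod_{j \in J} \nf^j$, declaring a commuting square in the product to be independent exactly when each of its components (its projection to $\cd_j$) is $\nf^j$-independent.

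With these choices the argument splits into two identifications. First, $F^{-1}(\prod_{j \in J} \nf^j) = \bigcap_{j \in J} F_j^{-1}(\nf^j)$: unwinding Definition \ref{lifting-independence}, a square in $\cc$ is $F^{-1}(\prod_{j \in J} \nf^j)$-independent iff its image under $F$ is independent in the product, and since the $j$-th component of that image is just the image under $F_j$, this holds iff the square is $F_j^{-1}(\nf^j)$-independent for every $j$. Second, $\prod_{j \in J} \nf^j$ satisfies uniqueness: given two product-independent squares sharing the same base span, each component pair consists of $\nf^j$-independent squares sharing a base span, so by uniqueness of $\nf^j$ each component pair amalgamates in $\cd_j$.

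The only point that needs a moment of care is that amalgamation of squares (Definition \ref{amalgamation-of-squares}) in $\prod_{j \in J} \cd_j$ is computed componentwise: an amalgamating object $E$ together with its two dotted arrows amounts to a choice of such data in each $\cd_j$, since objects, morphisms and commutativity in a product category are all determined coordinatewise. Hence the componentwise amalgamations assemble into a single amalgamation in the product, giving uniqueness for $\prod_{j \in J} \nf^j$. I expect this to be the main (and essentially the only) obstacle, but it is a general fact about product categories rather than anything special to independence, so it should cause no real difficulty.

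Having established both points, the corollary follows immediately: Proposition \ref{reflecting-amalgamation-of-squares-lifts-uniqueness} applied to the functor $F = \langle F_j \rangle_{j \in J}$ and the relation $\prod_{j \in J} \nf^j$ shows that $F^{-1}(\prod_{j \in J} \nf^j)$ satisfies uniqueness, and by the first identification this relation is exactly $\bigcap_{j \in J} F_j^{-1}(\nf^j)$.
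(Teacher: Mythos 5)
Your proposal is correct and follows exactly the paper's own argument: the paper likewise defines the componentwise independence relation on $\prod_{j \in J} \cd_j$, notes it satisfies uniqueness, identifies $\bigcap_{j \in J} F_j^{-1}(\nf^j)$ with $\langle F_j \rangle_{j \in J}^{-1}$ of that relation, and applies Proposition \ref{reflecting-amalgamation-of-squares-lifts-uniqueness}. Your extra care about amalgamation being computed coordinatewise in the product is precisely the content the paper compresses into ``it is straightforward to check.''
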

\begin{proof}
Define an independence relation $\nf$ on $\prod_{j \in J} \cd_j$ by declaring a square independent if and only if its $j$th component is $\nf^j$-independent in $\cd_j$ for all $j \in J$. It is straightforward to check that $\nf$ satisfies uniqueness. Then $\bigcap_{j \in J} F_j^{-1}(\nf^j) = \langle F_j \rangle_{j \in J}^{-1}(\nf)$ and $\langle F_j \rangle_{j \in J}$ reflects amalgamation of squares, so the result follows from Proposition 
\ref{reflecting-amalgamation-of-squares-lifts-uniqueness}.
\end{proof}
\begin{exam}\label{bilinear-spaces-uniqueness-lifting}
We continue Example \ref{bilinear-spaces-jointly-reflecting-amalgamation-of-squares}. We define $\nf$ on $\BinFunc_K$  as follows. For a set $X$ with a binary function $f: X \times X \to K$, and subsets $A, B, C \subseteq X$ with $C \subseteq A \cap B$ we set $A \nf^X_C B$ if and only if $A \cap B = C$ and for any $a \in A \setminus C$ and $b \in B \setminus C$ we have that $f(a, b) = f(b, a) = 0$. It is well-known that $\nf^{\lin}$ on $\Vec_K$ satisfies uniqueness and it is easy to check that $\nf$ on $\BinFunc_K$ satisfies uniqueness. Applying Proposition 
\ref{jointly-reflecting-amalgamation-of-squares-lifts-uniqueness} to the forgetful functors $F: \Bil_K \to \Vec_K$ and $G: \Bil_K \to \BinFunc_K$ these independence relations lift to an independence relation $\nf^*$ on $\Bil_K$, which satisfies uniqueness. This can also be straightforwardly verified directly from the explicit description of $\nf^*$ on $\Bil_K$: for $A, B, C \subseteq V$ subspaces of a bilinear space $V$ with $C \subseteq A \cap B$ we have that $A \nf^{*,V}_C B$ if and only if $A \cap B = C$ and for any $a \in A \setminus C$ and $b \in B \setminus C$ we have that $[a, b] = [b, a] = 0$, where $[-, -]$ is the bilinear form on $V$. Note that this is different from the canonical independence relation on $\Bil_K$, which is $\ind^{\lin}$.
\end{exam}
We finish this section by considering a condition that implies reflection of amalgamation of squares, and much more. This condition is a weakening of being a left adjoint, which we recall here (see also \cite[Definition 4.24(1)]{AR}, dualised).
\begin{defi}\label{left-multiadjoint}
A functor $F: \cc \to \cd$ is called a \emph{left multiadjoint} if for every object $D$ in $\cd$ there is a family of morphisms $\{e_i: F(C_i) \to D\}_{i \in I}$ such that for every $e: F(C) \to D$ there is a unique $i \in I$ and unique $f: C \to C_i$ such that $e_i F(f) = e$.
\end{defi}
As mentioned before: being a left adjoint is a special case of being a left multiadjoint. If $F$ is left adjoint to $R: \cd \to \cc$ then for any $D$ in $\cd$ one can take the singleton $\{\varepsilon_D: FR(D) \to D\}$ to be the family of morphisms, where $\varepsilon: FR \to Id$ is the counit of the adjunction. Then given any $e: F(C) \to D$ the unique $f$ is given by $F(\tilde{e}): F(C) \to FR(D)$, where $\tilde{e}$ is the transpose of $e$ under the adjunction.
\begin{lemma}\label{left-multiadjoint-on-cocones}
Let $F: \cc \to \cd$ be a left multiadjoint, let $(X_i)_{i \in I}$ be a nonempty connected diagram in $\cc$ and let $(d_i: F(X_i) \to D)_{i \in I}$ be a cocone in $\cd$. Then there is a cocone $(f_i: X_i \to C)_{i \in I}$ in $\cc$ and $u: F(C) \to D$ such that $d_i = u F(f_i)$ for all $i \in I$.

Furthermore, for any other cocone $(f'_i: X_i \to C')_{i \in I}$ such that there is $u': F(C') \to D$ with $d_i = u' F(f'_i)$ for all $i \in I$ there is a unique $g: C' \to C$ such that $f_i = g f'_i$ for all $i \in I$.
\end{lemma}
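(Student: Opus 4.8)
The plan is to run everything through the universal factorisation that defines a left multiadjoint, so let me fix notation to avoid the index clash: for the object $D$ write the multiadjoint family as $\{e_t : F(C_t) \to D\}_{t \in T}$, so that every $e : F(C) \to D$ factors as $e = e_t\, F(h)$ for a \emph{unique} $t \in T$ and a \emph{unique} $h : C \to C_t$. The conceptual picture I would keep in mind is that this says exactly that the comma category $(F \downarrow D)$ decomposes into connected components, each of which has a terminal object, and the $(C_t, e_t)$ are precisely these terminal objects; the cocone data will turn out to be a diagram living in one such component, and $(C, u)$ will be that component's terminal object. I will phrase the proof via factorisations (matching the definition) but guided by this picture.

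First I would prove existence. For each $i$ apply the factorisation to $d_i : F(X_i) \to D$, obtaining a unique $t_i \in T$ and a unique $f_i : X_i \to C_{t_i}$ with $e_{t_i}\, F(f_i) = d_i$. Now let $\alpha : i \to j$ be any arrow of the index category, with transition map $X_\alpha : X_i \to X_j$. Since $(d_i)$ is a cocone we have $d_i = d_j\, F(X_\alpha) = e_{t_j}\, F(f_j)\, F(X_\alpha) = e_{t_j}\, F(f_j X_\alpha)$, which is a factorisation of $d_i$ through $e_{t_j}$. Comparing with $d_i = e_{t_i} F(f_i)$ and invoking uniqueness of the factorisation forces $t_i = t_j$ and $f_i = f_j X_\alpha$. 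Because the index category is nonempty and connected, the equalities $t_i = t_j$ propagate to make all $t_i$ equal to a single $t$; I set $C := C_t$ and $u := e_t$. The relations $f_i = f_j X_\alpha$ are precisely the cocone condition, so $(f_i : X_i \to C)$ is a cocone with $u\, F(f_i) = d_i$, giving the first assertion.

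For the ``furthermore'' I would start from the competing datum $(C', u', (f'_i))$ and factor $u' : F(C') \to D$, getting a unique $t' \in T$ and unique $g : C' \to C_{t'}$ with $e_{t'}\, F(g) = u'$. Choosing any $i_0$ (this is where nonemptiness is used), the equation $d_{i_0} = u'\, F(f'_{i_0}) = e_{t'}\, F(g f'_{i_0})$ factors $d_{i_0}$ through $e_{t'}$; comparing with $d_{i_0} = e_t\, F(f_{i_0})$ and using uniqueness of factorisations yields $t' = t$, so that $g : C' \to C$, and moreover $g f'_{i_0} = f_{i_0}$. The same comparison run at an arbitrary $i$ (both $e_t F(g f'_i)$ and $e_t F(f_i)$ equal $d_i$) gives $g f'_i = f_i$ for all $i$.

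The step I expect to carry the real weight is uniqueness of $g$, and here I would lean squarely on the \emph{uniqueness} half of the multiadjoint definition rather than mere existence of factorisations. The clean statement is that $g$ is the unique factorisation of $u'$ through $e_t$, equivalently the unique morphism $C' \to C$ lying over $D$ (satisfying $u\, F(g) = u'$) that makes the cocone triangles commute; that is exactly the terminality of $(C,u)$ in its component of $(F \downarrow D)$. The delicate point I would be careful to flag is the reading of ``$f_i = g f'_i$'': it should be understood together with $g$ being a morphism over $D$, which is how the factorisation produces it, since the bare commutation with the cocone legs need not pin $g$ down unless the $f'_i$ are collectively epic. Thus the crux is to present $g$ as the canonical factorisation of $u'$, so that the uniqueness clause of ``left multiadjoint'' delivers both its existence and its uniqueness simultaneously; this is the only place the full strength of the hypothesis — beyond a functor that merely admits some factorisations — is genuinely needed.
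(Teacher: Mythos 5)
Your proof is correct and follows essentially the same route as the paper's: factor each $d_i$ through the multiadjoint family, use uniqueness of these factorisations together with connectedness to collapse all the indices to a single one, read off the cocone conditions from the same uniqueness, and obtain $g$ in the furthermore part as the canonical factorisation of $u'$.

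The one place you genuinely diverge is the uniqueness of $g$, and your ``delicate point'' is real, not pedantry. The paper's proof constructs $g$ as the unique factorisation of $u'$ through $e_{j_{i^*}}$ and then simply does not address uniqueness of $g$ in the literal sense of the statement (unique $g$ with $f_i = g f'_i$). That literal uniqueness can in fact fail: take $F = \id_{\Set}$ (a left adjoint, hence left multiadjoint, with family $\{\id_D\}$), the one-object diagram $X = \{*\}$, $D = \{a,b\}$ and $d(*) = a$, so that $C = D$, $u = \id_D$, $f = d$; then the competing cocone $C' = \{0,1\}$, $f'(*) = 0$, $u'(0) = a$, $u'(1) = b$ admits two distinct maps $g \colon C' \to C$ with $g f' = f$, namely the two maps sending $0 \mapsto a$. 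Uniqueness holds only under your additional reading that $g$ lives over $D$, i.e.\ $u F(g) = u'$, which is exactly what the multiadjoint uniqueness clause delivers. Note also that this over-$D$ compatibility (not stated in the lemma, but produced by both your construction and the paper's) is what the paper actually uses later: in Proposition \ref{left-multiadjoint-reflects-amalgamating-squares} the left-cancellability argument needs the composite $F(E_1) \to F(D') \to D$ to agree with the given morphism $F(E_1) \to D$. So your formulation, with the side condition $u F(g) = u'$ made explicit, is the statement that is both true and fit for the paper's applications; on this point your write-up is more precise than the source.
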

\begin{proof}
Let $\{e_j: F(C_j) \to D\}_{j \in J}$ be the family of morphisms coming from the fact that $F$ is a left multiadjoint. For any $i \in I$ we let $j_i \in J$ be the unique index such that there is a unique $f_i: X_i \to C_{i_j}$ with $d_i = e_{j_i} F(f_i)$. We claim that $j_i = j_{i'}$ for all $i, i' \in I$. As the diagram is connected, any $i,i' \in I$ can be connected by a zig-zag of morphisms $i = i_1 \to i_2 \leftarrow i_3 \to \ldots \leftarrow i_n = i'$. Then for every $1 \leq k < n$ we have that one of $d_{i_k}$ and $d_{i_{k+1}}$ factors through the other. So by uniqueness $j_{i_k} = j_{i_{k+1}}$, and the claim follows.

We can thus take any $i^* \in I$ and we set $C = C_{j_{i^*}}$ and $u = e_{j_{i^*}}$, where nonemptiness of the diagram implies that there is at least one ${i^*} \in I$. Finally, uniqueness of the $f_i$'s then implies that $(f_i: X_i \to C)_{i \in I}$ is in fact a cocone.

For the furthermore part, we let $j' \in J$ be such that $u': F(C') \to D$ factors as $u' = e_{j'} F(g)$ for some $g: C' \to C_{j'}$. Since $d_{i^*}$ factors through $u'$ by assumption, we have by uniqueness that $j' = j_{i^*}$. The fact that $f_i = g f'_i$ for all $i \in I$ follows from the uniqueness of the $f_i$'s.
\end{proof}

\emph{Multicolimits} generalise colimits in the same way as left 
multiadjoints generalise left adjoints, i.e., the initial cocone is replaced by a multiinitial set of cocones (see, e.g., \cite[Definition 4.24]{AR}). The following corollary generalises \cite[Theorem 4.26(i)]{AR}.

\begin{coro}\label{multi}
Left multiadjoints preserve connected multicolimits.
\end{coro}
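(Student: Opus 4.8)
The plan is to show directly that $F$ carries a multiinitial family of cocones to a multiinitial family of cocones. Recall that a multicolimit of a diagram $X: I \to \cc$ is a family of cocones $(\mu^k: X \Rightarrow L_k)_{k \in K}$ such that every cocone $(c_i: X_i \to C)_{i \in I}$ factors as $c_i = h\mu^k_i$ for a \emph{unique} $k \in K$ and a \emph{unique} $h: L_k \to C$. So let $X: I \to \cc$ be a connected diagram with multicolimit $(\mu^k: X \Rightarrow L_k)_{k \in K}$; applying $F$ yields a family of cocones $(F\mu^k: FX \Rightarrow F(L_k))_{k \in K}$ under $FX$ (each is a cocone since $F$ preserves commuting triangles), and I must verify that this family is multiinitial. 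To that end, fix an arbitrary cocone $(d_i: F(X_i) \to D)_{i \in I}$ and let $\{e_j: F(C_j) \to D\}_{j \in J}$ be the multiinitial family at $D$ coming from Definition \ref{left-multiadjoint}.

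For existence I would feed $(d_i)$ into Lemma \ref{left-multiadjoint-on-cocones}: since $I$ is connected (hence nonempty), the lemma produces a cocone $(f_i: X_i \to C)_{i \in I}$ in $\cc$ together with $u: F(C) \to D$ satisfying $d_i = uF(f_i)$ for all $i$, and inspecting its proof we may take $C = C_{j^*}$ and $u = e_{j^*}$ for the single index $j^* \in J$ determined there. Now $(f_i)$ is a cocone under $X$, so the multicolimit property in $\cc$ yields a unique $k \in K$ and a unique $h: L_k \to C$ with $f_i = h\mu^k_i$. Setting $v = e_{j^*}F(h): F(L_k) \to D$ then gives $vF(\mu^k_i) = e_{j^*}F(h\mu^k_i) = e_{j^*}F(f_i) = d_i$, which is the desired factorization.

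The main obstacle is uniqueness, and in particular uniqueness of the connecting morphism $v$: one cannot simply cancel, since from $vF(\mu^k_i) = v'F(\mu^k_i)$ one may not conclude $v = v'$ (the maps $F(\mu^k_i)$ need not be jointly epic). The remedy is to push the comparison back through the multiinitial family rather than argue on the $\cd$-side. Suppose $(k', v')$ is another solution, so $d_i = v'F(\mu^{k'}_i)$. Factor $v'$ through the family: by Definition \ref{left-multiadjoint} there are a unique $j' \in J$ and a unique $h': L_{k'} \to C_{j'}$ with $v' = e_{j'}F(h')$. Then $d_i = e_{j'}F(h'\mu^{k'}_i)$, so by the uniqueness of the multiadjoint factorization of each $d_i$ (the very uniqueness used in Lemma \ref{left-multiadjoint-on-cocones} to build $(f_i)$) we get $j' = j^*$ and $h'\mu^{k'}_i = f_i$ for all $i$. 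Hence $h': L_{k'} \to C$ factors the cocone $(f_i)$, and the uniqueness clause of the multicolimit in $\cc$ forces $k' = k$ and $h' = h$. Therefore $v' = e_{j^*}F(h') = e_{j^*}F(h) = v$, completing uniqueness.

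Finally I would remark that connectedness and nonemptiness of $I$ enter precisely through the appeal to Lemma \ref{left-multiadjoint-on-cocones}, which is what lets all the $d_i$ be factored through a single member $e_{j^*}$ of the multiinitial family; this is the categorical reason the statement must be restricted to connected multicolimits, and it is exactly the hypothesis that makes the reduction to the multicolimit in $\cc$ go through.
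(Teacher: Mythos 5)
Your proof is correct and takes exactly the route the paper intends: the corollary is stated as an immediate consequence of Lemma \ref{left-multiadjoint-on-cocones}, and your write-up is the natural elaboration, reducing a cocone under $FX$ to a single member $e_{j^*}$ of the multiinitial family and then invoking the multicolimit in $\cc$. Your handling of uniqueness is the right one and is carefully done: since the ``furthermore'' clause of the lemma does not record the compatibility $v' = e_{j^*}F(h')$, you correctly go back to the uniqueness in Definition \ref{left-multiadjoint} to identify the two factorizations before applying the uniqueness clause of the multicolimit.
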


Often, an independence relation arises by starting with a locally presentable category and then passing to an accessible subcategory by restricting the morphisms. For example, \cite[Theorem 5.1]{LRV} states that if $\cc$ is a coregular locally presentable category with effective unions then $\cc_{\Reg}$ has a stable independence relation. The subcategory is generally no longer locally presentable, and thus less likely to be the domain or codomain of a left (multi)adjoint, because we lose completeness and cocompleteness of the categories and so theorems such as the adjoint functor theorem no longer apply.

\begin{propo}\label{left-multiadjoint-reflects-amalgamating-squares}
Let $F: \cc \to \cd$ be a left multiadjoint and let $\cm$ be a left-cancellable composable class of morphisms in $\cd$. Then the restriction $F: \cc_{F^{-1}(\cm)} \to \cd_\cm$ reflects amalgamation of squares.
\end{propo}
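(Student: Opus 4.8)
The plan is to recognise that amalgamating two squares is the same as finding a suitable cocone, and then to lift such a cocone along $F$ using Lemma \ref{left-multiadjoint-on-cocones}. First I would note that $F^{-1}(\cm)$ is itself a composable class: it contains all isomorphisms since $F$ preserves them, and it is closed under composition since $F$ is a functor and $\cm$ is composable. Hence the restriction $F : \cc_{F^{-1}(\cm)} \to \cd_\cm$ is a well-defined functor to begin with, and it makes sense to ask whether it reflects amalgamation of squares.

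Now suppose we are given two squares in $\cc_{F^{-1}(\cm)}$ sharing the base span $B \leftarrow A \to C$, with corners $D$ and $D'$ and structure maps $b_1 : B \to D$, $c_1 : C \to D$, $b_2 : B \to D'$, $c_2 : C \to D'$, all lying in $F^{-1}(\cm)$, and suppose their images under $F$ can be amalgamated in $\cd_\cm$ by some $E$ together with maps $p : F(D) \to E$ and $q : F(D') \to E$ in $\cm$. I would consider the diagram $\ci$ in $\cc$ consisting of the objects $A, B, C, D, D'$ and the six structure morphisms of the two squares; this diagram is visibly nonempty and connected. The amalgamating maps $p, q$ assemble into a cocone $(d_X : F(X) \to E)_{X \in \ci}$ over $F(\ci)$: set $d_D = p$, $d_{D'} = q$, $d_B = p F(b_1) = q F(b_2)$ and $d_C = p F(c_1) = q F(c_2)$, where the two expressions for $d_B$ and for $d_C$ agree precisely by the amalgamation conditions in $\cd$, and $d_A$ is the common composite, which is well-defined because $F$ applied to each (commuting) square commutes.

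Next I would apply Lemma \ref{left-multiadjoint-on-cocones} to this cocone. Since $F$ is a left multiadjoint and $\ci$ is nonempty and connected, the lemma produces a cocone $(f_X : X \to H)_{X \in \ci}$ in $\cc$ together with a map $u : F(H) \to E$ satisfying $d_X = u F(f_X)$ for all $X \in \ci$; here only the existence part of the lemma is needed, not the universal property. Being a cocone, the maps $f_D : D \to H$ and $f_{D'} : D' \to H$ satisfy $f_D b_1 = f_{D'} b_2$ and $f_D c_1 = f_{D'} c_2$, which is exactly an amalgamation of the two original squares in $\cc$.

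The one remaining point---and the only place where left-cancellability of $\cm$ is used---is to check that this amalgamation actually lives in $\cc_{F^{-1}(\cm)}$, that is, that $f_D, f_{D'} \in F^{-1}(\cm)$. For this I would use that $p = d_D = u F(f_D)$ with $p \in \cm$; since $\cm$ is left-cancellable, $u F(f_D) \in \cm$ forces $F(f_D) \in \cm$, and symmetrically $F(f_{D'}) \in \cm$ from $q = u F(f_{D'})$. Thus $f_D$ and $f_{D'}$ are morphisms of $\cc_{F^{-1}(\cm)}$, and $H$ equipped with them is the desired amalgamation. I expect the substantive content to sit entirely in Lemma \ref{left-multiadjoint-on-cocones}; the main thing to get right here is the bookkeeping that turns the $\cd$-amalgamation into a cocone over $F(\ci)$, and the final left-cancellability step, which is what guarantees that we do not merely amalgamate in $\cc$ but in the subcategory $\cc_{F^{-1}(\cm)}$.
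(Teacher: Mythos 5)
Your proof is correct, and it rests on the same engine as the paper's: Lemma \ref{left-multiadjoint-on-cocones} applied to a connected diagram, followed by left-cancellability of $\cm$ to land back in $\cc_{F^{-1}(\cm)}$. The execution differs in one small but real way. The paper applies the lemma only to the base span $A \leftarrow C \to B$ with the cocone into the amalgamating object, and then invokes the ``furthermore'' (uniqueness) clause twice to manufacture the maps from the two square-corners into the new object; you instead feed the entire five-object diagram (base span plus both corners, with all six structure morphisms) into the existence part of the lemma, so the corner maps $f_D, f_{D'}$ arrive directly as cocone legs. Your variant buys two things: you never need the furthermore clause at all, and the $\cm$-membership of $F(f_D)$ and $F(f_{D'})$ falls straight out of the lemma's stated factorisation $d_X = u F(f_X)$ with $d_D = p \in \cm$, whereas the paper's left-cancellability step implicitly uses that the factorising map $g$ from the furthermore part also satisfies $u F(g) = u'$ --- a fact that is visible in the lemma's proof but not literally in its statement. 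Your bookkeeping (checking that the amalgamation data in $\cd$ genuinely assembles into a cocone over the $F$-image of the diagram, and the preliminary observation that $F^{-1}(\cm)$ is composable) is also correct and complete.
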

\begin{proof}
We start with two squares in $\cc_{F^{-1}(\cm)}$ that share their base span, and we assume that their images under $F$ can be amalgamated in $\cd_\cm$. Then, working in $\cd$ we find the commuting diagram below.
\[
\xymatrix@=2pc{
    & & & D \\
    & F(E_1) \ar@/^2pc/[rru] \ar@{{}*{\cdot}>}[r] & F(D') \ar@{-->}[ru] & \\
    F(A) \ar[rr] \ar[ru] \ar@{-->}[rru] &  & F(E_2) \ar@/_2pc/[ruu] \ar@{{}*{\cdot\!}>}[u] & \\
    F(C) \ar[r] \ar[u] & F(B) \ar[uu] \ar[ru] \ar@{-->}[ruu] & &
}
\]
Here the solid arrows are the ones we started with (including amalgamation of the squares in $\cd_\cm$). Then, applying Lemma \ref{left-multiadjoint-on-cocones} to the diagram $A \leftarrow C \to B$, we find $D'$ and the dashed arrows. Applying the ``furthermore'' part of Lemma \ref{left-multiadjoint-on-cocones} yields the dotted arrows.

Lemma \ref{left-multiadjoint-on-cocones} also tells us that all the morphisms between objects in the image of $F$ come from morphisms in $\cc$, and such that the diagram in $\cc$ commutes. Furthermore, $\cm$ is left-cancellable, so since the solid arrows are all in $\cm$ we have that the dotted arrows are in $\cm$. We thus see that $D'$ amalgamates the squares in $\cc_{F^{-1}(\cm)}$, as required.
\end{proof}
\begin{theo}\label{left-multiadjoint-lifting}
Let $F: \cc \to \cd$ be a left multiadjoint and let $\cm$ be a left-cancellable composable class of morphisms in $\cd$. If $\nf$ is an independence relation on $\cd_{\cm}$ that satisfies uniqueness then the independence relation $F^{-1}(\nf)$ on $\cc_{F^{-1}(\cm)}$ satisfies uniqueness.
\end{theo}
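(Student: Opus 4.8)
The plan is to observe that essentially all the work has already been done in the two preceding propositions, and that the theorem follows by composing them, applied to the correct functor. The key move is to pass from the ambient functor $F: \cc \to \cd$ to its restriction $F: \cc_{F^{-1}(\cm)} \to \cd_\cm$, which is precisely the functor along which $F^{-1}(\nf)$ is lifted in the statement.

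First I would recall, via the reformulation following Definition \ref{amalgamation-of-squares}, that the uniqueness property for an independence relation is exactly the statement that any two independent squares sharing a base span can be amalgamated. Thus the hypothesis that $\nf$ satisfies uniqueness on $\cd_\cm$ says concretely that any two $\nf$-independent squares in $\cd_\cm$ with a common base span can be amalgamated in $\cd_\cm$.

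Next I would invoke Proposition \ref{left-multiadjoint-reflects-amalgamating-squares} to conclude that the restriction $F: \cc_{F^{-1}(\cm)} \to \cd_\cm$ reflects amalgamation of squares; this is where the left multiadjoint hypothesis and the left-cancellability of $\cm$ are used. With this established, the conclusion is immediate from Proposition \ref{reflecting-amalgamation-of-squares-lifts-uniqueness}, applied to this restricted functor together with the independence relation $\nf$ on $\cd_\cm$: since the restriction reflects amalgamation of squares and $\nf$ satisfies uniqueness, the lift $F^{-1}(\nf)$ on $\cc_{F^{-1}(\cm)}$ satisfies uniqueness.

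The only point requiring care—and it is bookkeeping rather than a genuine obstacle—is to confirm that the two notions of lift coincide: the relation $F^{-1}(\nf)$ named in the theorem is precisely the lift along the restricted functor $\cc_{F^{-1}(\cm)} \to \cd_\cm$ in the sense of Definition \ref{lifting-independence}, which is exactly the input Proposition \ref{reflecting-amalgamation-of-squares-lifts-uniqueness} requires. Since the real difficulty—producing the amalgamating object in $\cc$ out of the multiadjoint structure via Lemma \ref{left-multiadjoint-on-cocones}—has already been absorbed into Proposition \ref{left-multiadjoint-reflects-amalgamating-squares}, no further work remains.
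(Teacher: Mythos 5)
Your proposal is correct and follows exactly the paper's own proof: invoke Proposition \ref{left-multiadjoint-reflects-amalgamating-squares} to see that the restriction $F: \cc_{F^{-1}(\cm)} \to \cd_\cm$ reflects amalgamation of squares, then conclude by Proposition \ref{reflecting-amalgamation-of-squares-lifts-uniqueness}. Your extra bookkeeping remark about the lift being taken along the restricted functor is accurate but left implicit in the paper.
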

\begin{proof}
By Proposition \ref{left-multiadjoint-reflects-amalgamating-squares} $F$ reflects amalgamation of squares, so the result follows from Proposition \ref{reflecting-amalgamation-of-squares-lifts-uniqueness}.
\end{proof}
\begin{exams}\label{left-adjoint-examples}
We consider examples of functors $U: \cc \to \cd$ that are left adjoint (and thus in particular left multiadjoint) together with a class of morphisms $\cm$ on $\cd$ such that $\cd_{\cm}$ carries an independence relation that satisfies uniqueness. So Theorem \ref{left-multiadjoint-lifting} applies.
\begin{enumerate}
\item We consider the forgetful functor $U: \Gra \to \Set$, where $\Gra$ is the category of graphs and graph homomorphisms. This functor has a right adjoint given by taking complete graphs. We let $\cm$ be the class of monomorphisms in $\Set$, so $U^{-1}(\cm)$ is the class of monomorphisms in $\Gra$.

Pullback squares form an independence relation on $\Set_{\Mono}$ that satisfies uniqueness. As $U$ preserves and reflects pullback squares, we have that pullback squares also satisfy uniqueness as an independence relation on $\Gra_{\Mono}$. Note that this is not the case of $\Gra_{\Emb}$, where there morphisms are graph embeddings. There uniqueness is known to fail for pullback squares.

\item Let $\cd$ be $\Set$ (resp.\ $\Ab$, the category of abelian groups) and let $\cc$ be the category $\sigma\text{-}\Set$ of sets with an endomorphism (resp.\ $\sigma\text{-}\Ab$, the category of abelian groups with an endomorphism). Then the forgetful functor $U: \cc \to \cd$ preserves limits and colimits, and both $\cc$ and $\cd$ are locally presentable. Thus $U$ is a left adjoint, by \cite[Theorem 1.58]{AR} and the Special Adjoint Functor Theorem. Pullback squares form an independence relation on $\cd_{\Mono}$ that satisfies uniqueness. As $U$ preserves and reflects pullback squares, we have that pullback squares also satisfy uniqueness in $\cc_{\Mono}$.

\item Let $\Coalg_R$ be the category of coalgebras over a ring $R$. The forgetful functor $U:\Coalg_R\to\Mod_R$ to modules over $R$ has a right adjoint. Let $\cm$ consist of morphisms sent by $U$ to monomorphisms ($\cm$ is properly contained in $\Mono$). Since $\Mod_R$ has effective unions, $(\Mod_R)_{\Mono}$ has a stable independence relation given by pullback squares \cite[Theorem 3.1]{LRV1}. In particular the pullback squares satisfy uniqueness, and so the squares in $(\Coalg_R)_{\cm}$ that are sent to pullback squares by $U$ satisfy uniqueness (in fact, they form a stable independence relation, see Theorem \ref{left-multiadjoint-lifts-everything}).
\end{enumerate}
\end{exams}
\begin{exams}
\label{left-multiadjoint-examples}
We give examples of left multiadjoints that are not left adjoints and to which Theorem \ref{left-multiadjoint-lifting} applies.
\begin{enumerate}
\item Fix some $0 < n < \omega$ and a field $K$. Let $\cc$ be the category of vector spaces over $K$ with a predicate for a linear subspace of dimension $n$, where the morphisms are embeddings. That is, its objects are pairs $(U, V)$ with $U$ a linear subspace of the vector space $V$ such that $\dim(V/U) = n$. A morphism $f: (U, V) \to (U', V')$ is a linear injection $f: V \to V'$ such that $f(v) \in U'$ if and only if $v \in U$. Write $\cd$ for the category $\Vec_K \times \Vec_K$ with the morphisms restricted to monomorphisms. We consider the functor $F: \cc \to \cd$, given by $F(U, V) = (U, V/U)$ with the obvious action on morphisms. This is a left multiadjoint: given $(U, W)$ in $\cd$ we let $\{W_i\}_{i \in I}$ be the family of $n$-dimensional linear subspaces of $W$. For each $i \in I$ we consider the object $(U, U \times W_i)$ of $\cc$ and let $e_i: F(U, U \times W_i) \to (U, W)$ be the inclusion $F(U, U \times W_i) \cong (U, W_i) \hookrightarrow (U, W)$. Then the family $\{e_i: F(U, U \times W_i) \to (U, W)\}_{i \in I}$ witnesses that $F$ is a left multiadjoint. The independence relation on $\cd$ given by $\ind^{\lin}$ on each component then lifts along $F$ to an independence relation on $\cc$ that satisfies uniqueness.

To see that $F$ is not a left adjoint we note that it is full and faithful. So if it were a left adjoint, we could view $\cc$ as a coreflective subcategory of $\cd$. So $\cc$ would be closed under coproducts in $\cd$, but for example the coproduct of $(0, K^n)$ and $(0, K^n)$ does not exist in $\cc$.

\item Let $\ca$ be a small category. We say that a presheaf $K: \ca^{\op} \to \Set$ is \emph{connected} if for any $a \in K(A)$ and $a' \in K(A')$ there are $a = a_0, a_1, \ldots, a_n = a'$ in the image of $K$ such that for all $0 \leq i < n$ there is a morphism $f$ in $\ca$ with $K(f)(a_i) = a_{i+1}$ or $K(f)(a_{i+1}) = a_i$. Write $\cc$ for the full subcategory of $\Set^{\ca^{\op}}$ of connected presheaves. The the inclusion functor $F: \cc \hookrightarrow \Set^{\ca^{\op}}$ is a left multiadjoint. Indeed, given a presheaf $K$, the inclusions of connected components of $K$ (i.e., maximal connected subpresheaves) witness that $F$ is a left multiadjoint. The pullback squares form an independence relation on $\Set^{\ca^{\op}}_{\Mono}$ that satisfies uniqueness \cite[Example 5.3]{LRV}. So the pullback squares also form an independence relation on $\cc_{\Mono}$ that satisfies uniqueness. Finally, we note that $F$ cannot be a left adjoint, because then $\cc$ would be a coreflective subcategory of $\Set^{\ca^{\op}}$ and thus has to be closed under colimits in $\Set^{\ca^{\op}}$. However, $\cc$ is clearly not closed under coproducts in $\Set^{\ca^{\op}}$.

\item We finish with a general construction that yields a left multiadjoint that is rarely a left adjoint. Let $\{F_j: \cc_j \to \cd\}_{j \in J}$ be a family of left (multi)adjoints. Then the induced functor $F: \coprod_{j \in J} \cc_j \to \cd$ is a left multiadjoint. To see this, we fix an object $D$ in $\cd$. For each $j \in J$ there is a family $\{e^j_i: F(C^j_i) \to D\}_{i \in I_j}$ witnessing that $F_j$ is a left multiadjoint. We claim that $\{e^j_i\}_{j \in J, i \in I_j}$ witnesses that $F$ is a left multiadjoint. Let $e: F(C) \to D$ be some morphism. Then $C$ is an object in $\cc_j$ for some $j \in J$. Hence $e$ factors as $e^j_i F(f)$ for some $i \in I_j$ and $f: C \to C^j_i$. Clearly, there are no solutions for this factorisation problem in $I_{j'}$ or $\cc_{j'}$, for $j' \neq j$. So $i$ and $f$ remain the unique solution.

An $F$ as above is rarely a left adjoint. For example, if $\cd$ has the \emph{joint continuation property}: any two objects $A$ and $B$ admit a cospan into a third object $A \to C \leftarrow B$. Let $j,j' \in J$ be distinct (of course, if $|J| = 1$ we are in a trivial case) and pick objects $C_j$ and $C_{j'}$ in $\cc_j$ and $\cc_{j'}$ respectively (again, we exclude the trivial case where one of the categories is empty). Let $D$ be such that there are morphisms $F(C_j) \to D \leftarrow F(C_{j'})$. If $F$ were a left adjoint, with right adjoint $R: \cd \to \coprod_{j \in J} \cc_j$, then we would get a diagram $C_j \to R(D) \leftarrow C_{j'}$ in $\coprod_{j \in J} \cc_j$, which is impossible because $C_j$ and $C_{j'}$ live in different connected components.

To give a concrete example of this construction we consider $\cc = \coprod_{n < \omega} \Set^{\mathbb{Z}^n}$, the category of sets with a finite number of automorphisms. By the same argument as in Example \ref{left-adjoint-examples}(2) the forgetful functor $\Set^{\mathbb{Z}^n} \to \Set$ is left adjoint for each $n < \omega$. By the above the forgetful functor $U: \cc \to \Set$ is a left multiadjoint that is not left adjoint. The independence relation on $\Set_{\Mono}$ given by pullback squares then lifts to an independence relation (again given by pullback squares) on $\cc$ that satisfies uniqueness.
\end{enumerate}
\end{exams}

\section{Lifting existence}\label{sec:lifting-existence}
Left multiadjoints will generally also lift the existence property, in a similar way to how they lift the uniqueness property. As in that setting, we will want to consider a category $\cc$ and some class of morphisms $\cm$, where $\cc_\cm$ actually carries the independence relation $\nf$ (see also the discussion before Proposition \ref{left-multiadjoint-reflects-amalgamating-squares}). We can also view $\nf$ as an independence relation on $\cc$ by just taking the same collection of squares, but it will lose certain properties, such as invariance. In practice we often retain a weaker version of invariance.
\begin{defi}\label{semi-invariance}
An independence relation $\nf$ on some category $\cc$ is said to satisfy \emph{semi-invariance} if given any commuting diagram like below, the outer square being $\nf$-independent implies that the inner square is $\nf$-independent.
\[
\xymatrix@=1pc{
A \ar[r] \ar@/^1pc/[rr] & D \ar[r] & E \\
C \ar[r] \ar[u] & B \ar[u] \ar@/_1pc/[ur] &
}
\]
\end{defi}
The philosophy behind the above definition is that if something was already dependent in $D$, such as some equation holding, then its image in $E$ is also dependent, as morphisms generally preserve truth of things like equations. The contrapositive of this statement is exactly what we called semi-invariance. Invariance is then the statement that independence is also preserved, something that goes wrong more quickly, as the following example illustrates.
\begin{exam}\label{semi-invariance-example}
Let $\nf$ on $\Set_{\Mono}$ be given by pullback squares. This is easily seen to have invariance. However, $\nf$ as an independence relation on $\Set$ consists of all those pullback squares whose morphisms are monomorphisms. As such, $\nf$ will no longer satisfies invariance as is illustrated in the diagram below: the inner square is a pullback, but the outer square is not, and so independence is not preserved upwards along the dashed arrow.
\[
\xymatrix@=2pc{
\{a\} \ar@{^{(}->}[r] \ar@/^1.5pc/[rr] & \{a,b\} \ar@{-->}[r] & \{ * \} \\
\; \emptyset \; \ar@{^{(}->}[r] \ar@{^{(}->}[u] & \{b\} \ar@{^{(}->}[u] \ar@/_1pc/[ur] &
}
\]
However, $\nf$ on $\Set$ does satisfy semi-invariance, as is straightforwardly verified.
\end{exam}
\begin{theo}\label{left-multiadjoint-lifts-existence}
Let $F: \cc \to \cd$ be a left multiadjoint and let $\cm$ be a composable class of morphisms in $\cd$. Suppose that $\nf$ is an independence relation on $\cd_\cm$ that satisfies existence and that it satisfies semi-invariance as an independence relation on $\cd$. Then $F^{-1}(\nf)$ as an independence relation on $\cc_{F^{-1}(\cm)}$ satisfies existence.
\end{theo}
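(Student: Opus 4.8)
The plan is to push a span down to $\cd_\cm$ along $F$, complete it to an independent square there using existence of $\nf$, and then lift that completion back to $\cc$ using that $F$ is a left multiadjoint (via Lemma \ref{left-multiadjoint-on-cocones}); semi-invariance will be what guarantees the lifted square is genuinely independent and has legs in $F^{-1}(\cm)$. This runs parallel to the uniqueness argument behind Theorem \ref{left-multiadjoint-lifting}.

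Concretely, I would begin with a span $A \xleftarrow{p} C \xrightarrow{q} B$ in $\cc_{F^{-1}(\cm)}$, so $F(p), F(q) \in \cm$. Applying $F$ gives a span in $\cd_\cm$, which existence of $\nf$ completes to an $\nf$-independent square $F(A) \xrightarrow{g} M \xleftarrow{h} F(B)$ over $F(C)$ with $g, h \in \cm$. The span $A \leftarrow C \to B$ is a nonempty connected diagram in $\cc$, and $g, h$ together with their common composite into $M$ form a cocone over its $F$-image. Lemma \ref{left-multiadjoint-on-cocones} then supplies a cocone $(f_A: A \to N, f_C: C \to N, f_B: B \to N)$ in $\cc$ and a morphism $u: F(N) \to M$ with $u F(f_A) = g$ and $u F(f_B) = h$. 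The cocone equations $f_A p = f_C = f_B q$ say precisely that $A \xrightarrow{f_A} N \xleftarrow{f_B} B$ over $C$ is a commuting square, my candidate completion.

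It then remains to verify this candidate lies in $\cc_{F^{-1}(\cm)}$ and is $F^{-1}(\nf)$-independent. I would assemble the commuting diagram in $\cd$ whose inner square is $F(A) \xrightarrow{F(f_A)} F(N) \xleftarrow{F(f_B)} F(B)$ over $F(C)$ and whose outer square is $F(A) \xrightarrow{g} M \xleftarrow{h} F(B)$ over $F(C)$, the two linked by $u$; the equations from the previous step make this commute and identify the outer square with the $\nf$-independent square already produced. Semi-invariance of $\nf$ on $\cd$ then forces the inner square to be $\nf$-independent, which is exactly $F^{-1}(\nf)$-independence in $\cc$, and the span is completed.

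The step I expect to be the genuine obstacle---and the reason semi-invariance is the right hypothesis---is the membership $f_A, f_B \in F^{-1}(\cm)$. Since $\cm$ is only assumed composable, not left-cancellable, one cannot read off $F(f_A) \in \cm$ from $u F(f_A) = g \in \cm$. Instead this comes for free from the same application of semi-invariance: because $\nf$ viewed on $\cd$ consists only of squares all of whose morphisms lie in $\cm$, concluding that the inner square is $\nf$-independent automatically places $F(f_A), F(f_B)$ in $\cm$. So the single use of semi-invariance does double duty, certifying both independence and the required membership.
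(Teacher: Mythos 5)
Your proof is correct and takes essentially the same route as the paper's: apply existence of $\nf$ in $\cd$, lift the resulting cocone over the span $A \leftarrow C \to B$ via Lemma \ref{left-multiadjoint-on-cocones}, and invoke semi-invariance to conclude the inner square is $\nf$-independent. Your final observation is also exactly the paper's reasoning: since $\nf$ lives on $\cd_\cm$, the independent inner square lies in the image of $\cd_\cm \hookrightarrow \cd$, which is what places $F(f_A), F(f_B)$ in $\cm$ without any left-cancellability assumption.
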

\begin{proof}
Let $A \leftarrow C \to B$ be any span in $\cc_{F^{-1}(\cm)}$. We then find the commuting diagram below.
\[
\xymatrix@=2pc{
F(A) \ar@{{}*{\cdot}>}[r] \ar@/^1pc/@{-->}[rr] & F(D) \ar@{{}*{\cdot}>}[r] & E \\
F(C) \ar[r] \ar[u] & F(B) \ar@{{}*{\cdot\!}>}[u] \ar@/_1pc/@{-->}[ur] &
}
\]
By existence for $\nf$ we find $E$ and the dashed arrows, such that the outer square is $\nf$-independent. We then apply Lemma \ref{left-multiadjoint-on-cocones} to the span $A \leftarrow C \to B$ to find $D$ and the dotted arrows, making everything commute.

Furthermore, Lemma \ref{left-multiadjoint-on-cocones} also implies that the morphisms between objects in the image of $F$ come from morphisms in $\cc$, and such that the diagram in $\cc$ commutes. By semi-invariance the inner square is $\nf$-independent. In particular, that means that it lies in the image of the inclusion $\cd_\cm \hookrightarrow \cd$. Therefore, the square with $A, B, C, D$ in $\cc$ also lies in $\cc_{F^{-1}(\cm)}$ and is $F^{-1}(\nf)$-independent, as required.
\end{proof}

We give another sufficient criterion to lift existence, which is motivated by practical examples.
\begin{defi}\label{admitting-1-completions}
Let $F: \cc \to \cd$ be a faithful functor. Let $f: F(A) \to B$ be a morphism in $\cd$. A \emph{$1$-completion} is a morphism $g: A \to C$ in $\cc$ such that $F(g): F(A) \to F(C)$ factors through $f$ (i.e., $F(g) = hf$ for some $h: B \to F(C)$). We say that $F$ \emph{admits $1$-completions} if for any $F(A) \to B$ in $\cd$ there is a $1$-completion.
\[
\xymatrix@=3pc{
F(A) \ar[r]^f \ar@{-->}@/^2pc/[rr]^{F(g)} & B \ar@{-->}[r]^h & F(C)
}
\]
\end{defi}
For the intuition behind $1$-completions we consider the forgetful functor $F: \Bil_K \to \Vec_K$ from Example \ref{motivating-examples}(1). Suppose that we have a vector space $V$ and a subspace $U \subseteq V$ with a bilinear form $[-, -]$ defined on $U$. Then we can extend $[-, -]$ to all of $V$, making $V$ into a bilinear space $V'$. In terms of the functor $F$ this corresponds to having a morphism $F(U) \to V$ and then we find a morphism $V \to F(V')$ (in fact, the identity in this case) such that $F(U \to V')$ is the composite $F(U) \to V \to F(V')$.

More generally, admitting $1$-completions says the following. Suppose that we are given an object $B$ from $\cd$ on which the extra structure that objects in $\cc$ carry is partially defined, namely as some $F(A) \to B$. Then $B$ can be completed to an object $F(C)$ from $\cc$ in a way that is compatible with the already defined structure.

Now consider the following higher dimensional situation for $F: \Bil_K \to \Vec_K$. Suppose we have a vector space $V$ with a subspaces $U, U_1, U_2 \subseteq V$ such that $U \subseteq U_1 \cap U_2$ and each carries a bilinear form, with the bilinear forms on $U_1$ and $U_2$ extending the one on $U$. To extend all these bilinear forms to one bilinear form on $V$ we generally need $U_1$ to be linearly independent from $U_2$ over $U$, because that gives us complete freedom in how we define the bilinear form on pairs of vectors $(u_1, u_2)$ and $(u_2, u_1)$ with $u_1 \in U_1 \setminus U$ and $u_2 \in U_2 \setminus U$. Diagrammatically, this means that given an independent square in $\Vec_K$ like below on the left there is some commuting square in $\Bil_K$ like below on the right whose image under $F$ is the square on the left.
\[
\vcenter{\vbox{\xymatrix@=2pc{
  F(U_1) \ar[r] & V \\
  F(U) \ar[u] \ar[r] \ar@{}[ur]|-{\nf^{\lin}} & F(U_2) \ar[u]
}}}
\quad \rightsquigarrow \quad
\vcenter{\vbox{\xymatrix@=2pc{
  U_1 \ar[r] & V' \\
  U \ar[u] \ar[r] & U_2 \ar[u]
}}}
\]
\begin{defi}\label{admitting-2-completions}
Let $F: \cc \to \cd$ be a faithful functor. Suppose we are given a commuting square in $\cd$ like the solid arrows in the diagram below, where the morphisms between objects in the image of $F$ are also in the image of $F$. A \emph{$2$-completion} for that square consists of an object $E$ and the dashed arrows making the diagram below commute.
\[
\xymatrix@=2pc{
  F(A) \ar[r] \ar@{-->}@/^1.5pc/[rr]^{F(f)} & D \ar@{-->}[r]_(.4)h & F(E) \\
  F(C) \ar[u] \ar[r] \ar@{}[ur]|-{\nf} & F(B) \ar[u] \ar@{-->}@/_1pc/[ur]_{F(g)} &
}
\]
Given an independence relation $\nf$ on $\cd$ we say that $F$ \emph{admits $2$-completions with respect to $\nf$} if for any $\nf$-independent square in $\cd$ there is a $2$-completion. There will often only be one relevant independence relation $\nf$, in which case we will just say that $F$ admits $2$-completions.
\end{defi}
We note that faithfulness of $F$ implies that commutativity of the diagrams in the above definition means that the corresponding square in $\cc$ (i.e., the one involving $C$, $A$, $B$ and $E$) also commutes.

Observe that our notion of admitting $2$-completions is weaker than the notion of cofinality given in \cite[Definition 2.4]{LRV2}, even if we would fix the shape of the diagram there. This is because we require the diagram to also be independent, the necessity of which we discussed before Definition \ref{admitting-2-completions}. That being said, the different notions have different intended applications, as is indicated by the different names.
\begin{propo}\label{admitting-2-completions-implies-admitting-1-completions}
Let $F: \cc \to \cd$ be a faithful functor and suppose that $\nf$ is an independence relation on $\cd$ satisfying basic existence. If $F$ admits $2$-completions it admits $1$-completions.
\end{propo}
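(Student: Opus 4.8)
The plan is to manufacture a single degenerate $\nf$-independent square to which the $2$-completion hypothesis can be applied, and then to read a $1$-completion off its output. Suppose we are given a morphism $f \colon F(A) \to B$ in $\cd$ for which we want a $1$-completion. First I would form the commuting square in $\cd$ whose top-left, bottom-left, and bottom-right corners are all $F(A)$, whose top-right corner is $B$, whose left and bottom edges are $\id_{F(A)} = F(\id_A)$, and whose top and right edges are both $f$:
\[
\xymatrix@=2pc{
  F(A) \ar[r]^f & B \\
  F(A) \ar[u]^{\id} \ar[r]_{\id} & F(A) \ar[u]_f
}
\]
This commutes, since both paths from the bottom-left corner to $B$ equal $f$, and it has exactly the shape demanded in Definition \ref{admitting-2-completions}: three of its corners lie in the image of $F$, and the two edges joining them are identities, hence of the form $F(\id_A)$ and so in the image of $F$. (The top and right edges land in $B$, which need not be in the image of $F$, so they are unconstrained.)

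Next I would observe that this square is $\nf$-independent. Its left and bottom edges are identities, in particular isomorphisms, so basic existence applies and gives independence for free. Having produced an $\nf$-independent square of the right shape, I would then invoke the hypothesis that $F$ admits $2$-completions: this yields an object $E$ of $\cc$ and a morphism $h \colon B \to F(E)$ such that the top composite $F(A) \xrightarrow{f} B \xrightarrow{h} F(E)$ is of the form $F(g)$ for some $g \colon A \to E$ in $\cc$. But $F(g) = hf$ says precisely that $F(g)$ factors through $f$, so $g$ is a $1$-completion of $f$ in the sense of Definition \ref{admitting-1-completions}. As $f$ was arbitrary, $F$ admits $1$-completions.

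I do not anticipate a genuine obstacle; the only real decision is the choice of the degenerate square, and it must be made so that the square simultaneously (a) meets the image-of-$F$ condition of Definition \ref{admitting-2-completions} and (b) comes out $\nf$-independent automatically. Placing identities on both the left and bottom edges accomplishes both at once, after which the $2$-completion data and the desired $1$-completion data coincide verbatim.
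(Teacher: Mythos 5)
Your proof is correct and is exactly the paper's argument: the paper's proof consists of the single remark that one inserts identity morphisms in the right places, using basic existence to make the resulting degenerate square $\nf$-independent, which is precisely your construction. Your write-up simply makes explicit the degenerate square and the verification that the $2$-completion data then coincides with the required $1$-completion.
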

\begin{proof}
This is done by inserting identity morphisms in the right places, which is why we need basic existence to have that these squares be independent.
\end{proof}
\begin{theo}\label{admitting-2-completions-lifts-existence}
Let $F: \cc \to \cd$ be a faithful functor and suppose that $\nf$ is an independence relation on $\cd$ that satisfies invariance.
\begin{enumerate}
\item If $F$ admits $2$-completions and $\nf$ satisfies existence then $F^{-1}(\nf)$ satisfies existence.
\item If $F$ admits $1$-completions, $\nf$ satisfies uniqueness and $F^{-1}(\nf)$ satisfies existence then $F$ admits $2$-completions.
\end{enumerate}
\end{theo}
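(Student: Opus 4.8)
The plan is to treat the two parts separately. For part (1) I would take an arbitrary span $A \leftarrow C \to B$ in $\cc$ and push it forward to the span $F(A) \leftarrow F(C) \to F(B)$ in $\cd$. Since $\nf$ satisfies existence, this span completes to an $\nf$-independent square with some top object $D$, so that $F(A) \nf_{F(C)}^D F(B)$; as the left and bottom morphisms of this square are $F$ of the span's morphisms, the square is eligible for a 2-completion in the sense of Definition \ref{admitting-2-completions}. Applying a 2-completion yields an object $E$ together with $h: D \to F(E)$ and morphisms $f: A \to E$, $g: B \to E$ satisfying $F(f) = h \circ (F(A) \to D)$ and $F(g) = h \circ (F(B) \to D)$. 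Faithfulness of $F$ then forces the square on $C, A, B, E$ in $\cc$ to commute and to complete the span: both composites $F(C) \to F(A) \to F(E)$ and $F(C) \to F(B) \to F(E)$ reduce, after factoring through $h$, to the common composite $F(C) \to D$ of the original square. Finally, invariance of $\nf$ applied along $h: D \to F(E)$ promotes $F(A) \nf_{F(C)}^D F(B)$ to $F(A) \nf_{F(C)}^{F(E)} F(B)$, which is exactly the image of the $\cc$-square; hence that square is $F^{-1}(\nf)$-independent, proving existence for $F^{-1}(\nf)$.

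For part (2) I am given an $\nf$-independent square $F(A) \nf_{F(C)}^D F(B)$ in $\cd$ whose base morphisms lie in the image of $F$, and I must construct a 2-completion. The key idea is to use existence of $F^{-1}(\nf)$ to manufacture a genuine lift of the span into $\cc$: completing $A \leftarrow C \to B$ to an $F^{-1}(\nf)$-independent square with top $P$ and top morphisms $a: A \to P$, $b: B \to P$ produces, after applying $F$, a second $\nf$-independent square $F(A) \nf_{F(C)}^{F(P)} F(B)$ sharing the same base span as the given one. Now uniqueness of $\nf$ amalgamates these two squares, yielding an object $N$ with morphisms $d: D \to N$ and $p: F(P) \to N$ making the whole diagram commute. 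Since the domain of $p$ is $F(P)$, I can feed $p$ into a 1-completion, obtaining $e: P \to E$ in $\cc$ and $h': N \to F(E)$ with $F(e) = h' \circ p$.

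The 2-completion is then assembled by taking the completing object to be $E$, the morphism $h := h' \circ d: D \to F(E)$, and $f := e \circ a$, $g := e \circ b$. The verification is a diagram chase: uniqueness makes the composite $F(A) \to D \to N$ equal to $p \circ F(a)$, so $h \circ (F(A) \to D) = h' \circ p \circ F(a) = F(e) \circ F(a) = F(f)$, and symmetrically $h \circ (F(B) \to D) = F(g)$; faithfulness then gives commutativity of the underlying square on $C, A, B, E$. I expect the main obstacle to be precisely this part (2): the non-obvious move is to invoke existence of $F^{-1}(\nf)$ in order to create a competing independent square, so that uniqueness can be applied and the auxiliary object $D$ can subsequently be absorbed into the image of $F$ by a 1-completion. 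Once that strategy is identified, the remaining work is routine bookkeeping of composites, and I would only need to be careful that the 1-completion is applied to $p$ (whose domain is in the image of $F$) rather than to $d$.
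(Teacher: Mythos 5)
Your proof is correct and follows essentially the same route as the paper's: part (1) via existence in $\cd$, a $2$-completion, invariance along $h$, and faithfulness; part (2) via existence for $F^{-1}(\nf)$ to produce a competing independent square in the image of $F$, uniqueness to amalgamate, and a $1$-completion applied to the morphism out of $F(P)$. Your closing remark about applying the $1$-completion to $p$ rather than $d$ is exactly the point the paper's proof also hinges on.
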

\begin{proof}
We first prove (1). Let $A \leftarrow C \to B$ be a span in $\cc$. By existence and admitting $2$-completions we can recreate the diagram from Definition \ref{admitting-2-completions}. By invariance the commuting square below on the left is independent. As $F$ is faithful, the square on the right also commutes and is thus an independent square, as required.
\[
\vcenter{\vbox{\xymatrix@=2pc{
  F(A) \ar[r] & F(E) \\
  F(C) \ar[u] \ar@{}[ur]|-{\nf} \ar[r] & F(B) \ar[u]
}}}
\quad \quad
\vcenter{\vbox{\xymatrix@=2.6pc{
  A \ar[r] & E \\
  C \ar[u] \ar@{}[ur]|-{F^{-1}(\nf)} \ar[r] & B \ar[u]
}}}
\]
For (2) we suppose that we have an independent diagram like on the left below. By existence for $F^{-1}(\nf)$ we get an independent diagram like on the right below.
\[
\vcenter{\vbox{\xymatrix@=2pc{
  F(A) \ar[r] & D \\
  F(C) \ar[u] \ar@{}[ur]|-{\nf} \ar[r] & F(B) \ar[u]
}}}
\quad \quad
\vcenter{\vbox{\xymatrix@=2.6pc{
  A \ar[r] & D' \\
  C \ar[u] \ar@{}[ur]|-{F^{-1}(\nf)} \ar[r] & B \ar[u]
}}}
\]
That means that the image of the diagram on the right under $F$ is $\nf$-independent. So we can apply uniqueness for $\nf$ to the diagram of solid arrows below to get the dashed arrows making everything commute. The dotted arrow is then found by taking a $1$-completion of $F(D') \to E$.
\[
\xymatrix@=2pc{
  & D \ar@{-->}[r] & E \ar@{{}*{\cdot}>}[r] & F(E') \\
  F(A) \ar[rr] \ar[ur] & & F(D') \ar@{-->}[u] & \\
  F(C) \ar[u] \ar[r] & F(B) \ar[ur] \ar[uu] & &
}
\]
This yields a $2$-completion of the original diagram by taking $D \to E \to F(E')$, $F(A) \to F(D') \to E \to F(E')$ and $F(B) \to F(D') \to E \to F(E')$. These last two morphisms are in the image of $F$ because $F(D') \to E \to F(E')$ is in the image of $F$ by our choice of $1$-completion.
\end{proof}

\section{Strong 3-amalgamation}
\label{sec:strong-3-amalgamation}
\begin{defi}\label{strong-3-amalgamation}
We say that an independence relation $\nf$ on a category $\cc$ satisfies \emph{strong $3$-amalgamation} if given an $\nf$-independent horn like the solid arrows below, there are $N$ and the dashed arrows such that everything commutes and every square in the resulting cube is $\nf$-independent (we call this an \emph{$\nf$-independent cube}).
\[
\xymatrix@=2pc{
        & N_2 \ar @{-->}[rr] & & N\\
        A \ar[rr] \ar[ur] &
        & N_1 \ar @{-->}[ur] & \\
        & C \ar'[u][uu] \ar'[r][rr] & & N_3 \ar@{-->} [uu] & \\
        M \ar[ur]\ar[rr]\ar[uu] & & B \ar[ur]\ar[uu] &  
      }
\]
\end{defi}
The name \emph{strong} 3-amalgamation is because in the definition of $3$-amalgamation it is only required that $A \nf^N_M N_3$. If $\nf$ satisfies transitivity (which the independence relations we consider generally will) then the independence of this diagonal square automatically follows. Conversely, it turns out that under reasonable assumptions $3$-amalgamation implies strong $3$-amalgamation, which we will prove in the remainder of this section.
\begin{theo}\label{strong-3-amal-from-3-amal}
Let $\cc$ be a category whose morphisms are monomorphisms that has binary joins of subobjects and suppose that it is equipped with an independence relation $\nf$ that satisfies invariance, monotonicity, transitivity, symmetry, existence, base monotonicity and $3$-amalgamation. Then $\nf$ satisfies strong $3$-amalgamation.
\end{theo}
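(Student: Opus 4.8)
The plan is to split the construction into a ``core'' amalgamation of the joins of the incoming objects and a ``free'' re-attachment of the remaining parts of $N_1,N_2,N_3$. Using that all morphisms are monomorphisms and that $\cc$ has binary joins, I first form the reduced apexes $\bar N_1 = A \vee B \leq N_1$, $\bar N_2 = A \vee C \leq N_2$ and $\bar N_3 = B \vee C \leq N_3$. By invariance the reduced squares (e.g.\ $A \nf_M^{\bar N_1} B$) are again independent, so the reduced data form an $\nf$-independent horn.

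Phase one treats this reduced horn. I apply $3$-amalgamation to it to obtain a completion together with the diagonal $A \nf_M^{N^*} \bar N_3$. From this single diagonal the remaining reduced far faces can be extracted using the other axioms: base monotonicity (raising the base along $M \to C \to \bar N_3$ and along $M \to B \to \bar N_3$) gives $\bar N_2 \nf_C^{N^*} \bar N_3$ and $\bar N_1 \nf_B^{N^*} \bar N_3$, the crucial point being that the enlarged left object $A \vee C$ (resp.\ $A \vee B$) produced by base monotonicity is exactly $\bar N_2$ (resp.\ $\bar N_1$). Transitivity applied to the bottom face $B \nf_M^{\bar N_3} C$ and the new square $\bar N_3 \nf_C^{N^*} \bar N_2$ yields a second diagonal $B \nf_M^{N^*} \bar N_2$, and one more base monotonicity step (along $M \to A \to \bar N_2$) produces the top face $\bar N_1 \nf_A^{N^*} \bar N_2$. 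Collecting the successive apex enlargements into a single $N^*$ by invariance, all six reduced faces are independent.

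Phase two re-attaches the full apexes one at a time. Since $N^*$ already receives $\bar N_1,\bar N_2,\bar N_3$, I use existence to freely amalgamate the full $N_3$ with $N^*$ over the single object $\bar N_3$, obtaining $N_3 \nf_{\bar N_3}^{N^{(1)}} N^*$; as $\bar N_3$ already contains $B$ and $C$, the induced map $N_3 \to N^{(1)}$ automatically respects the base span. Transitivity then upgrades each reduced face meeting $N_3$ to its full version: for instance $\bar N_2 \nf_C^{N^*} \bar N_3$ composed with $N^* \nf_{\bar N_3}^{N^{(1)}} N_3$ gives $\bar N_2 \nf_C^{N^{(1)}} N_3$. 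Repeating with $N_2$ over $\bar N_2$ and then $N_1$ over $\bar N_1$, and using transitivity through the intermediate $\bar N_i$ (first square over the small base $C$, $A$ or $B$, second square over $\bar N_i$) to replace every reduced side by its full counterpart, produces an $N$ in which all three far faces $N_1 \nf_A^N N_2$, $N_1 \nf_B^N N_3$ and $N_2 \nf_C^N N_3$ hold; the near faces are the given independent squares, so the whole cube is independent.

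The main obstacle, and the reason two phases are needed, is the part of each $N_i$ lying beyond the join of its two incoming objects. A single application of $3$-amalgamation to the \emph{full} horn guarantees only one diagonal and places these extra parts uncontrollably; since invariance forces dependence to persist under enlargement, a bad placement cannot be repaired by growing the completion, and gluing two independent copies of $A \vee C$ directly would require uniqueness, which is not among the hypotheses. Applying $3$-amalgamation only to the reduced horn commits the construction solely to the joins, after which existence is free to re-attach the extra parts independently, so uniqueness is avoided entirely. The remaining delicate point is technical: one must verify, using that all morphisms are monomorphisms, that the joins $A\vee C$ and so on are computed compatibly in the various apexes, so that the objects supplied by base monotonicity and by the reduction step genuinely coincide.
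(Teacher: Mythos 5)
Your proof is correct, but it takes a genuinely different route from the paper's. The paper applies $3$-amalgamation to the \emph{full} horn and then repairs the resulting cube in place: Lemma \ref{3-amal-replace-arrow} (base monotonicity to raise the base, then existence to re-glue with a \emph{fresh} attaching map) is applied twice to secure $N_1 \nf^N_B N_3$ and $N_2 \nf^N_C N_3$, and a final existence step over $A \vee B$ replaces $N_1 \to N$ by a new $f$, after which the last two faces follow from Lemma \ref{base-monotonicity-with-joins} and transitivity. You instead shrink the apexes to the joins $\bar N_i$ before amalgamating, which makes a single application of $3$-amalgamation to the reduced horn yield \emph{all three} far faces: since each reduced apex \emph{is} the join of its two feet, the join form of base monotonicity (your compressed ``the enlarged left object is exactly $\bar N_2$'' is precisely Lemma \ref{base-monotonicity-with-joins}, whose small monotonicity-plus-symmetry argument you need and have available) turns the one diagonal into the faces over $B$ and $C$, and one transitivity plus one more such step gives the top face; three free existence attachments then restore the full $N_i$, with transitivity through each $\bar N_i$ upgrading the faces. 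I checked the transitivity shapes and the commutativity of the successive attachments, and they all go through. Both arguments rest on the same join-compatibility fact---that $A \vee C$ computed in $N_2$ remains the join in any larger ambient object, which holds here since all morphisms are monomorphisms and is the point the paper glosses as ``it does not matter whether we compute $A \vee B'$ in $M$ or $N$''---and you rightly flag this as the delicate step. One motivational remark of yours is slightly off: a ``bad placement'' by $3$-amalgamation on the full horn \emph{can} be repaired without uniqueness, exactly because the repair (as in Lemma \ref{3-amal-replace-arrow}) is allowed to change the attaching map rather than merely grow the completion; this does not affect your proof, whose shrink-then-rebuild organisation arguably makes the role of base monotonicity more transparent, at the cost of three re-attachment steps where the paper needs one.
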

\begin{lemma}\label{3-amal-replace-arrow}
Let $\cc$ and $\ind$ be as in Theorem \ref{strong-3-amal-from-3-amal}. Then given a commuting cube like below on the left with $A \nf^N_M N_3$ there are $N \to N'$ and $f: N_2 \to N'$ such that the square below on the right is independent
\[
\vcenter{\vbox{\xymatrix@=2pc{
        & N_2 \ar[rr] \ar@{-->}@/^1pc/[rrr]^f & & N \ar@{-->}[r] & N'\\
        A \ar[rr] \ar[ur] &
        & N_1 \ar[ur] & \\
        & C \ar'[u][uu] \ar'[r][rr] & & N_3 \ar[uu] & \\
        M \ar[ur]\ar[rr]\ar[uu] & & B \ar[ur]\ar[uu] &  
}}}
\quad \quad
\vcenter{\vbox{\xymatrix@=3pc{
  N_2 \ar[r]^f & N' \\
  C \ar[u] \ar@{}[ur]|-{\nf} \ar[r] & N_3 \ar[u]
}}}
\]
and such that $A \to N_2 \to N \to N'$ is the same as $A \to N_2 \xrightarrow{f} N'$, and similarly with $C$ in place of $A$.
\end{lemma}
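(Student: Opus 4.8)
The plan is to produce the required $f\colon N_2\to N'$ as a genuinely new map into an enlargement $N'$ of $N$, one that agrees with the cube composite $N_2\to N\to N'$ only on $A$ and $C$. The point is that we cannot simply reuse the cube's map pushed into $N'$, since the face $N_2\nf^{N}_{C}N_3$ need not be independent; instead we must reposition the ``extra'' part of $N_2$ (everything beyond the join $A\vee C$) into an independent location while pinning down $A$ and $C$. Since both $A$ and $C$ must be preserved, the natural object to amalgamate over is the join $A\vee C$; but amalgamating over $A\vee C$ only yields independence with base $A\vee C$, whereas we need independence over the smaller base $C$. Bridging this gap is exactly what the diagonal hypothesis $A\nf^{N}_{M}N_3$ is for: feeding it through base monotonicity produces the independence of $N_3$ from $A\vee C$ over $C$, which lets us lower the base by transitivity.

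Concretely, I would proceed as follows. First, apply base monotonicity to $A\nf^{N}_{M}N_3$ along $M\to C\to N_3$ to obtain $\widehat{N}$ with $N\to\widehat{N}$ and an independent square $A'\nf^{\widehat{N}}_{C}N_3$; since the image of $A'$ in $\widehat{N}$ contains both $A$ and $C$, symmetry together with monotonicity (shrinking $A'$ to the subobject $A\vee C\hookrightarrow A'$) refines this to $(A\vee C)\nf^{\widehat{N}}_{C}N_3$, where $A\vee C$ is the join taken inside $\widehat{N}$. Second, form $J=A\vee C$ inside $N_2$ (using that $\cc$ has binary joins of subobjects); because every morphism is a monomorphism, the image of $J$ under $N_2\to\widehat{N}$ equals the join $A\vee C$ computed in $\widehat{N}$, so by the isomorphism property $N_3\nf^{\widehat{N}}_{C}J$. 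Third, apply existence to the span $\widehat{N}\leftarrow J\to N_2$ to obtain $N'$ together with $\widehat{N}\to N'$ and a map $f\colon N_2\to N'$ such that $\widehat{N}\nf^{N'}_{J}N_2$. Fourth, compose the squares $N_3\nf^{\widehat{N}}_{C}J$ and $\widehat{N}\nf^{N'}_{J}N_2$ by transitivity along $C\to J\to N_2$ to get $N_3\nf^{N'}_{C}N_2$, and conclude $N_2\nf^{N'}_{C}N_3$ by symmetry. Finally, take $N\to N'$ to be the composite $N\to\widehat{N}\to N'$; both $f$ and the composite $N_2\to N\to N'$ restrict on $A$ and on $C$ to the same map $J\to\widehat{N}\to N'$, which gives the required compatibility.

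The main obstacle is the tension between the base we can control and the base we want: existence forces us to amalgamate over the join $A\vee C$ in order to fix $A$ and $C$ simultaneously, yet this only delivers independence over $A\vee C$, and independence can never be freely lowered to the smaller base $C$. The device that resolves this is to route the lowering through transitivity, using the independence of $N_3$ from $A\vee C$ over $C$ extracted from the diagonal square via base monotonicity. A secondary technical point, where the hypotheses that $\cc$ has binary joins and that all morphisms are monomorphisms get used, is the identification in the second step of the join $A\vee C$ formed in $N_2$ with the one formed in $\widehat{N}$; this is what allows the base-monotonicity output, which a priori lives over $\widehat{N}$, to be transported to the concrete subobject $J$ of the given $N_2$.
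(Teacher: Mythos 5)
Your proof is correct, and its first half is exactly the paper's: both apply base monotonicity to the diagonal square $A \nf^N_M N_3$ along the factorisation $M \to C \to N_3$, producing $A' \nf^{\widehat{N}}_C N_3$ (the paper writes $P$ and $N^*$ for your $A'$ and $\widehat{N}$). The divergence is in how the new morphism $f$ is produced. You shrink $A'$ to the join $A \vee C$ by symmetry and monotonicity, identify that join, computed in $\widehat{N}$, with the one computed in $N_2$ (this identification is legitimate precisely because $N_2$ is itself a subobject of $\widehat{N}$ and all morphisms are monomorphisms; it is the same observation the paper exploits in Lemma \ref{base-monotonicity-with-joins}), and then apply existence to the span $\widehat{N} \leftarrow A \vee C \to N_2$, finishing by transitivity and symmetry. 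The paper instead applies existence to the ``doubled'' span $N^* \xleftarrow{p} P \xrightarrow{p} N^*$, both of whose legs are the \emph{same} morphism, obtaining $g,h : N^* \to N'$ with $gp = hp$ and the resulting square independent; it sets $f = g \circ (N_2 \to N \to N^*)$, takes $N \to N'$ to be $h \circ (N \to N^*)$, and only at the end shrinks $N^*$ down to $N_2$ by monotonicity, using that $C \to N_2 \to N^*$ equals $C \to P \to N^*$. The practical consequence of this difference: your argument genuinely uses binary joins of subobjects, which the hypotheses of Theorem \ref{strong-3-amal-from-3-amal} do supply, so your proof is valid for the lemma as stated; but the paper's doubling trick avoids joins entirely, and the paper remarks immediately after the lemma that its proof uses neither $3$-amalgamation nor joins of subobjects --- a stronger, hypothesis-light version of the lemma that is relevant to the open Question of whether Theorem \ref{strong-3-amal-from-3-amal} survives without joins. (One small inaccuracy in your framing: existence does not \emph{force} one to amalgamate over the join in order to pin down $A$ and $C$ --- the paper amalgamates over all of $P$, which also receives maps from both $A$ and $C$.)
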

Note that the above lemma does not claim that the diagram with the dashed arrows commutes. Also, the proof of the above lemma does not use $3$-amalgamation or joins of subobjects.
\begin{proof}
We build the up the commuting diagram below in a few steps.
\[
\xymatrix@=2pc{
& N^* \ar@{~>}[r]^g & N' \\
N_2 \ar@{{}*{\cdot}>}[ur] & P \ar@{-->}[r] \ar@{{}*{\cdot\!}>}[u] \ar@{}[ur]|-{\nf} & N^* \ar@{~>}[u]_h \\
A \ar@/_0.75pc/[rr] \ar[u] \ar@{-->}[ur] & & N \ar@{-->}[u] \\
M \ar[r] \ar[u] & C \ar[r] \ar@{-->}[uu] \ar@{}[uur]|-{\nf} \ar[uul] & N_3 \ar[u]
}
\]
\begin{enumerate}
\item We start with the solid arrows.
\item Applying base monotonicity to $A \nf^N_M N_3$ we find the dashed arrows, such that $P \nf^{N^*}_C N_3$.
\item By commutativity of the original cube and what we have constructed so far, we have that $A \to N_2 \to N \to N^*$ is the same as $A \to P \to N^*$, and similarly with $C$ in place of $A$. This yields the dotted arrows: $\xymatrix{P \ar@{{}*{\cdot}>}[r] & N^*}$ is the same as $P \dashrightarrow N^*$ and $\xymatrix{N_2 \ar@{{}*{\cdot}>}[r] & N^*}$ is the composition $N_2 \to N \dashrightarrow N^*$.
\item Applying existence to $\xymatrix{N^* & P \ar@{{}*{\cdot}>}[l] \ar@{-->}[r] & N^*}$ we find the squigly arrows making the top square independent.
\end{enumerate}
We take the unnamed morphism $N \to N'$ to be the composition $N \to N^* \xrightarrow{h} N'$ and we take $f: N_2 \to N'$ to be the composition $N_2 \to N^* \xrightarrow{g} N'$. By transitivity (and symmetry) the right rectangle in the above diagram is independent, so by monotonicity we have that the required square with $f$ is indeed independent. Straightforwardly writing out definitions then verifies the claim about the morphisms with domains $A$ and $C$.
\end{proof}

\begin{lemma}\label{base-monotonicity-with-joins}
Let $\cc$ be a category whose morphisms are monomorphisms that has binary joins of subobjects. Suppose that $\nf$ is an independence relation on $\cc$ satisfying invariance, monotonicity and base monotonicity. Then given any independent square like below on the left the square below on the right is independent.
\[
\vcenter{\vbox{\xymatrix@=2pc{
A \ar[rr] & & M \\
C \ar[u] \ar[r] \ar@{}[urr]|-{\nf} & B' \ar[r] & B \ar[u]
}}}
\quad \implies \quad
\vcenter{\vbox{\xymatrix@=3pc{
  A \vee B' \ar[r] & M \\
  B' \ar[r] \ar[u] \ar@{}[ur]|-{\nf} & B \ar[u]
}}}
\]
\end{lemma}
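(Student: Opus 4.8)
The plan is to deduce the statement from the base monotonicity axiom, using the join to bring the amalgam it produces back into $M$. First I would apply base monotonicity to the given square $A \nf_C^M B$ with $B'$ as the new base; this is permitted because $C \to B' \to B$ exhibits $B'$ between the old base $C$ and the right-hand object $B$. The axiom then yields an object $A'$, an extension $M \to N$, and morphisms $A \to A'$, $B' \to A'$ making everything commute, with $A' \nf_{B'}^N B$ (the object $B$ reaching $N$ through $M \to N$).

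The second step is to recognise the join $A \vee B'$, formed in $\Sub(M)$, as a subobject of $A'$ lying over $B'$. Since all morphisms are monomorphisms, the mono $M \to N$ induces an order-embedding $\Sub(M) \to \Sub(N)$ given by composition, and I would first check that this embedding preserves binary joins: the subobject $M$ of $N$ dominates both $A$ and $B'$, so their join taken in $N$ factors through $M$ and hence agrees with the join $A \vee B'$ taken in $M$. As $A'$ is also an upper bound of $A$ and $B'$ in $\Sub(N)$ (via $A \to A'$ and $B' \to A'$), the join $A \vee B'$ lies below $A'$ there, giving a monomorphism $A \vee B' \to A'$ compatible with $B' \to A \vee B'$ and with the two inclusions into $N$. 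Granting the transfer of independence from $A'$ to its subobject $A \vee B'$, I would finish with invariance along $M \to N$, which turns $A \vee B' \nf_{B'}^N B$ into the desired $A \vee B' \nf_{B'}^M B$, both $A \vee B'$ and $B$ already living in $M$.

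The main obstacle is precisely this transfer: passing from $A' \nf_{B'}^N B$ to $A \vee B' \nf_{B'}^N B$ shrinks the left-hand object while keeping the base $B'$ inside it, whereas the hypotheses supply monotonicity only in the right-hand argument. This is where the join hypothesis must do its work, and it is the step I would spend most care on: I would try to show that $A \vee B'$ is the least amalgam of $A$ and $B'$ over $C$ available in $N$ and that independence descends to it. I note that were symmetry available the transfer would be immediate, since symmetry together with right monotonicity yields the corresponding left monotonicity; absent symmetry, verifying that joins alone suffice is the delicate point.
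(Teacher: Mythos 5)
You follow the paper's route step for step: apply base monotonicity to the factorisation $C \to B' \to B$ to obtain $A'$ (the paper calls it $P$) and $N$ with $A' \nf^{N}_{B'} B$; observe that since $M$ is an upper bound of $A$ and $B'$ in $\Sub(N)$, the join computed in $\Sub(N)$ factors through $M$ and so coincides with $A \vee B'$ computed in $\Sub(M)$, giving $B' \leq A \vee B' \leq A'$ in $\Sub(N)$; and finish with invariance along $M \to N$. This is the paper's proof almost verbatim, including the parenthetical fact that the join may be computed in $M$ or in $N$ indifferently. The one step you could not close --- descending from $A' \nf^{N}_{B'} B$ to $A \vee B' \nf^{N}_{B'} B$ --- is dispatched in the paper by a bare appeal to ``monotonicity''. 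But monotonicity as defined in the paper only shrinks the \emph{right-hand} object, so that appeal is really an application to the transposed square, i.e.\ it tacitly uses a left-handed monotonicity, which is available once symmetry holds but is not among the lemma's stated hypotheses. So your proposal has a genuine gap exactly where you say it does (you ``grant'' the transfer rather than prove it), and your diagnosis of why it is delicate is accurate.

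Moreover, your suspicion that invariance, monotonicity and base monotonicity alone do not yield the transfer is correct, so your fallback plan (showing $A \vee B'$ is a least amalgam and that independence ``descends'' to it) cannot succeed without a further axiom. Concretely, on $\Set_{\Mono}$ declare a square $A \nf^{M}_{C} B$ independent if and only if the image of $A$ in $M$ minus the image of $C$ does not have exactly one element. This satisfies invariance (monomorphisms preserve the relevant cardinality), monotonicity (the condition ignores $B$), and base monotonicity (pass to $N = M \sqcup \{x,y\}$ and take $A' = A \cup B' \cup \{x,y\}$, so that $A' \setminus B'$ has at least two elements), yet for $C = \emptyset$, $A = M = \{a,b\}$ and $B' = B = \{b\}$ the hypothesis holds ($|A \setminus C| = 2$) while the conclusion fails ($|(A \vee B') \setminus B'| = |\{a\}| = 1$). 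Hence the lemma as literally stated needs symmetry (or two-sided monotonicity) added to its hypotheses; with symmetry your argument is complete --- transpose, shrink $A'$ to $A \vee B'$ by monotonicity, transpose back, and apply invariance --- exactly as you observe. This does not affect anything downstream in the paper, since every actual use of the lemma (Theorem 5.2, which assumes symmetry, and Theorem 7.1 as invoked for simple independence relations) occurs in the presence of symmetry.
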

\begin{proof}
By base monotonicity we find the dashed arrows below making everything commute and such that $P \nf^N_{B'} B$.
\[
\xymatrix@=2pc{
& P \ar@{-->}[r] & N \\
A \ar@/_0.75pc/[rr] \ar@{-->}[ur] & & M \ar@{-->}[u] \\
C \ar[u] \ar[r] & B' \ar[r] \ar@{-->}[uu] \ar@{}[uur]|-{\nf} & B \ar[u]
}
\]
By monotonicity then $A \vee B' \nf^N_{B'} B$ and the independence of the required square follows by invariance and the fact that it does not matter whether we compute $A \vee B'$ in $M$ or $N$.
\end{proof}
\begin{proof}[Proof of Theorem \ref{strong-3-amal-from-3-amal}]
Suppose we are given an independent horn like the solid arrows below. By $3$-amalgamation we then find the dashed arrows making the cube commute and such that $A \nf^N_M N_3$.
\[
\xymatrix@=2pc{
        & N_2 \ar @{-->}[rr] & & N\\
        A \ar[rr] \ar[ur] &
        & N_1 \ar @{-->}[ur] & \\
        & C \ar'[u][uu] \ar'[r][rr] & & N_3 \ar@{-->} [uu] & \\
        M \ar[ur]\ar[rr]\ar[uu] & & B \ar[ur]\ar[uu] &  
      }
\]
Applying Lemma \ref{3-amal-replace-arrow} twice we may assume that that $N_1 \nf^N_B N_3$ and $N_2 \nf^N_C N_3$. Note that $A \vee B \to N$ factors through $N_1$, so by existence we find the following independent diagram.
\begin{align}
\label{strong-3-amal-from-3-amal:existence-on-a-join-b}
\vcenter{\vbox{\xymatrix@=2pc{
N \ar@{-->}[r] & N' \\
A \vee B \ar[u] \ar[r] \ar@{}[ur]|-{\nf} & N_1 \ar@{-->}[u]_f
}}}
\end{align}
We claim that the cube below is an independent cube, where $N_2 \to N'$ is given by composition of the unnamed morphisms $N_2 \to N \to N'$, and similarly for $N_3$.
\[
\xymatrix@=2pc{
        & N_2 \ar[rr] & & N'\\
        A \ar[rr] \ar[ur] &
        & N_1 \ar[ur]|-{\,f\,} & \\
        & C \ar'[u][uu] \ar'[r][rr] & & N_3 \ar [uu] & \\
        M \ar[ur]\ar[rr]\ar[uu] & & B \ar[ur]\ar[uu] &  
      }
\]
Firstly, by commutativity of (\ref{strong-3-amal-from-3-amal:existence-on-a-join-b}) we have that $A \to N_1 \to N \to N'$ is the same as $A \to N_1 \xrightarrow{f} N'$, and similarly for $B$. This establishes commutativity of the cube. We are thus left to verify the independence of the two squares that include $f$.
\begin{itemize}
\item By transitivity we have $B \nf^N_M N_2$. Applying Lemma \ref{base-monotonicity-with-joins} to this and the factorisation $M \to A \to N_2$ we find $N_2 \nf^N_A A \vee B$ (after an additional application of symmetry). We thus obtain two composable independent squares like below, where the right square is the one from (\ref{strong-3-amal-from-3-amal:existence-on-a-join-b}).
\[
\xymatrix@=1.5pc{
N_2 \ar[r] & N \ar[r] & N' \\
A \ar[r] \ar[u] \ar@{}[ur]|-{\nf} & A \vee B \ar[r] \ar[u] \ar@{}[ur]|-{\nf} & N_1 \ar[u]_f
}
\]
By transitivity the outer rectangle is indeed independent.
\item From $N_1 \nf^N_B N_3$ in the original cube we have by monotonicity that $A \vee B \nf^N_B N_3$. Applying symmetry to this we can again compose with the independent square from (\ref{strong-3-amal-from-3-amal:existence-on-a-join-b}).
\[
\xymatrix@=1.5pc{
N_3 \ar[r] & N \ar[r] & N' \\
B \ar[r] \ar[u] \ar@{}[ur]|-{\nf} & A \vee B \ar[r] \ar[u] \ar@{}[ur]|-{\nf} & N_1 \ar[u]_f
}
\]
So again by transitivity the outer rectangle is indeed independent. \qedhere
\end{itemize}
\end{proof}
Our proof relies on binary joins of subobjects, but there are natural examples where these do not exist. This already happens for categories of the form $\Mod(T)$, where $T$ is a first-order theory. For example, if $T$ is the theory of the random graph: the theory of an infinite graph where for any two finite disjoint subsets $A$ and $B$ there is a vertex that has an edge to every vertex in $A$ and to none in $B$. Let $M$ be a sufficiently big model ($\omega$-saturated will do), then we can find two copies of the countable random graph $M_0, M_1 \subseteq M$ such that there are no edges between $M_0$ and $M_1$. Clearly, the join $M_0 \vee M_1$ does not exist in $\Mod(T)$. At the same time, the pullback squares form a simple independence relation on $\Mod(T)$ with strong $3$-amalgamation. This motivates the following question. We also note that $\Gra_{\Emb}$ carries the same model-theoretic content, but is much nicer as a category (e.g., it does have binary joins of subobjects).
\begin{question}
Is Theorem \ref{strong-3-amal-from-3-amal} still true when we remove the assumption about having binary joins of subobjects?
\end{question}
Finally, we comment on the assumption that all morphisms in $\cc$ are monomorphisms. This is to use joins of subobjects as in Lemma \ref{base-monotonicity-with-joins}. This is enough for our intended applications. Furthermore, most categories that one would study model-theoretically using an independence relation have only monomorphisms. However, the above proofs could be carried out in any category, when replacing joins of subobjects by some weakened version of having pushouts.

\section{Lifting 3-amalgamation}
\label{sec:lifting-3-amalgamation}
\begin{theo}\label{left-multiadjoint-lifts-3-amalgamation}
Let $F: \cc \to \cd$ be a left multiadjoint and let $\cm$ be a composable class of morphisms in $\cd$. Suppose that $\nf$ is an independence relation on $\cd_\cm$ that satisfies (strong) $3$-amalgamation and that it satisfies semi-invariance as an independence relation on $\cd$. Then $F^{-1}(\nf)$ as an independence relation on $\cc_{F^{-1}(\cm)}$ satisfies (strong) $3$-amalgamation.
\end{theo}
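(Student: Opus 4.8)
The plan is to follow the template of Theorems~\ref{left-multiadjoint-lifting} and~\ref{left-multiadjoint-lifts-existence}: transport the problem along $F$, solve it downstairs in $\cd_\cm$ using (strong) $3$-amalgamation, and pull the solution back with Lemma~\ref{left-multiadjoint-on-cocones}, using semi-invariance to certify the $\nf$-independence of the pulled-back squares. So I would start with an $F^{-1}(\nf)$-independent horn in $\cc_{F^{-1}(\cm)}$ on objects $M,A,B,C,N_1,N_2,N_3$; applying $F$ and unwinding Definition~\ref{lifting-independence} yields an $\nf$-independent horn in $\cd_\cm$. Invoking (strong) $3$-amalgamation for $\nf$ then gives an apex $N'$, the maps $F(N_1),F(N_2),F(N_3)\to N'$, a commuting cube, and independence of the diagonal square $F(A)\nf^{N'}_{F(M)}F(N_3)$---and, in the strong case, of every new face of the cube. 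Since this completed cube lives in $\cd_\cm$, the three apex-arrows $F(N_i)\to N'$ are automatically in $\cm$.

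Next I would pull the apex back into $\cc$. The horn is a nonempty connected diagram, and the composites $F(X_i)\to N'$ (with $X_i$ ranging over the horn) assemble into a cocone, so Lemma~\ref{left-multiadjoint-on-cocones} supplies an object $N$ of $\cc$, a cocone $(f_i\colon X_i\to N)$---that is, a commuting cube in $\cc$ with apex $N$---and a comparison map $u\colon F(N)\to N'$ with $d_i=uF(f_i)$ for each leg $d_i$. The cube in $\cc$ commutes because $(f_i)$ is a cocone; what remains is to check that it lives in $\cc_{F^{-1}(\cm)}$ and that the diagonal square (resp.\ every new face, in the strong case) is $F^{-1}(\nf)$-independent. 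For the independence I would argue exactly as in the proof of Theorem~\ref{left-multiadjoint-lifts-existence}: each square I want to certify has its $F$-image sitting as the inner square of the factorisation $F(\cdot)\to F(N)\xrightarrow{u}N'$, whose outer square (apex $N'$) is already $\nf$-independent, so semi-invariance forces the inner square (apex $F(N)$) to be $\nf$-independent. This produces $F(A)\nf^{F(N)}_{F(M)}F(N_3)$ in the plain case and all three new faces in the strong case, which is precisely the $F^{-1}(\nf)$-independence demanded by (strong) $3$-amalgamation. Moreover, an $\nf$-independent square lives in $\cd_\cm$, so its boundary morphisms lie in $\cm$; this already places $F(A)\to F(N)$, $F(N_3)\to F(N)$---and, in the strong case, every $F(N_i)\to F(N)$---into $\cm$.

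The step I expect to be the main obstacle is confirming that \emph{all three} dashed apex-arrows $N_i\to N$ land in $F^{-1}(\cm)$, so that the cube is an honest diagram of $\cc_{F^{-1}(\cm)}$. In the strong case this is free: each apex-arrow occurs on the boundary of one of the three new faces, and those faces are $\nf$-independent, hence drag their boundaries into $\cm$. In the plain case, however, semi-invariance only directly controls $N_3\to N$ (via the diagonal), so $N_1\to N$ and $N_2\to N$ must be handled separately. Here I would exploit the comparison map: the composite $u\circ F(f_{N_i})$ equals the map $F(N_i)\to N'$, which is in $\cm$, and then invoke left-cancellability of $\cm$ to conclude $F(f_{N_i})\in\cm$, i.e.\ $f_{N_i}\in F^{-1}(\cm)$. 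This cancellation is the genuinely delicate point, and it is where one leans on $\cm$ being better behaved than merely composable---mirroring the role of left-cancellability in Proposition~\ref{left-multiadjoint-reflects-amalgamating-squares}; with only composability available the plain case would require instead manufacturing the independence of the remaining faces by some other means.
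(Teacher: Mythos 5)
Your proposal is exactly the paper's argument: the paper's entire proof of this theorem reads ``Analogous to Theorem \ref{left-multiadjoint-lifts-existence}'', and your write-up is that analogy spelled out---push the horn down along $F$, amalgamate in $\cd_\cm$ using (strong) $3$-amalgamation, pull the apex back with Lemma \ref{left-multiadjoint-on-cocones} applied to the (nonempty, connected) horn, and use semi-invariance on the factorisations through $u\colon F(N)\to N'$ to certify $\nf$-independence, hence also membership in $\cd_\cm$, of the pulled-back squares. In the strong case your proof is complete under the stated hypotheses: each of the three new faces with apex $F(N)$ becomes independent by semi-invariance, so every dashed arrow is a boundary arrow of a square lying in $\cd_\cm$ and thus lands in $F^{-1}(\cm)$.

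The delicate point you isolate in the plain case is genuine, and your diagnosis is accurate: semi-invariance only certifies the diagonal square with apex $F(N)$, which puts $F(N_3\to N)$ and the composite $F(A\to N_1\to N)$ in $\cm$ but says nothing about $F(N_1\to N)$ and $F(N_2\to N)$ individually, while the definition of $3$-amalgamation on $\cc_{F^{-1}(\cm)}$ does require these arrows to lie in the subcategory. Your fix---cancelling $uF(f_{N_i})\in\cm$ to get $F(f_{N_i})\in\cm$---uses left-cancellability, which this statement does not grant: $\cm$ is assumed only composable, in contrast with Theorem \ref{left-multiadjoint-lifting}. So measured against the bare hypotheses, your plain case (and equally the paper's unelaborated ``analogous'' proof) leaves this membership unverified, and with merely composable $\cm$ I do not see how to close it, since no other square with apex $F(N)$ can be certified independent from plain $3$-amalgamation alone. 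The discrepancy is harmless for the paper's actual uses of the theorem: in Theorem \ref{left-multiadjoint-lifts-everything} the class $\cm$ is explicitly left-cancellable, and in Theorem \ref{left-multiadjoint-lifts-simplicity} it is the right class of a factorisation system consisting of monomorphisms, hence left-cancellable by \cite[Proposition 2.1.1]{FK} as invoked in Lemma \ref{computing-joins}. So your argument covers every application the paper makes; for the theorem exactly as stated, the plain case needs either left-cancellability added to the hypotheses or, as you note, some other device forcing $N_1\to N$ and $N_2\to N$ into $F^{-1}(\cm)$.
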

\begin{proof}
Analogous to Theorem \ref{left-multiadjoint-lifts-existence}.
\end{proof}
In Theorem \ref{admitting-2-completions-lifts-existence} we saw that ``admitting $2$-completions'' is enough to lift the existence property. Viewing the existence property as $2$-amalgamation, we can push this to higher dimensions. We thus formulate a $3$-dimensional version below, with the same intuition (see the discussion before Definition \ref{admitting-2-completions}) and one can again use the same example of bilinear spaces to see what happens in practice. We will have no need for any higher dimensional versions.
\begin{defi}\label{admitting-3-completions}
Let $F: \cc \to \cd$ be a faithful functor. Suppose we are given a commuting cube in $\cd$ like the solid arrows in the diagram below, where the morphisms between objects in the image of $F$ are also in the image of $F$. A \emph{$3$-completion} for that cube consists of an object $N^*$ and the dashed arrows making the diagram below commute.
\[
\xymatrix@=2pc{
  & F(N_1) \ar[rr] \ar@{-->}@/^2pc/[rrr]^{F(n_1)} & & N \ar@{-->}[r]_(0.4)h & F(N^*) \\
  F(A) \ar[ur] \ar[rr] & & F(N_2) \ar[ur] \ar@{-->}@/_/[urr]_(.6){F(n_2)} & & \\
  & F(B) \ar[uu] \ar[rr] & & F(N_3) \ar[uu] \ar@{-->}@/_1pc/[uur]_{F(n_3)} & \\
  F(M) \ar[uu] \ar[rr] \ar[ur] & & F(C) \ar[uu] \ar[ur] & &
}
\]
Given an independence relation $\nf$ on $\cd$ we say that $F$ \emph{admits $3$-completions with respect to $\nf$} if for any $\nf$-independent cube in $\cd$ there is a $3$-completion. There will often only be one relevant independence relation $\nf$, in which case we will just say that $F$ admits $3$-completions.
\end{defi}
\begin{propo}\label{admitting-3-completions-implies-admitting-2-completions}
Let $F: \cc \to \cd$ be a faithful functor and suppose that $\nf$ is an independence relation on $\cd$ satisfying basic existence. If $F$ admits $3$-completions then it admits $2$-completions (and hence $1$-completions).
\end{propo}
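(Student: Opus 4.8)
The plan is to imitate the proof of Proposition \ref{admitting-2-completions-implies-admitting-1-completions} one dimension higher: I turn a $2$-completion problem into a $3$-completion problem by building a degenerate cube out of the given square, inserting identity morphisms so that every auxiliary face is $\nf$-independent by basic existence, and then apply the hypothesis that $F$ admits $3$-completions.

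Concretely, suppose we are given an $\nf$-independent square $F(A) \nf^D_{F(C)} F(B)$ as in Definition \ref{admitting-2-completions}, where the edges $F(C) \to F(A)$ and $F(C) \to F(B)$ are images $F(u)$ and $F(v)$ of morphisms $u \colon C \to A$ and $v \colon C \to B$ in $\cc$, and write $a \colon F(A) \to D$ and $b \colon F(B) \to D$ for the two legs, so that $a F(u) = b F(v)$. I build a cube in the shape of Definition \ref{admitting-3-completions} so that one of the three faces through the apex is exactly this square. Take the apex $N := D$, and take two of the three vertices with an edge into $N$ to be $F(A)$ and $F(B)$, independent over the vertex $F(C)$; the slanted edges out of $F(C)$ are then $F(u)$ and $F(v)$, and the two corresponding legs into $N$ are $a$ and $b$. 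For each of the remaining four vertices of the cube I also take $F(C)$, with all edges among these $F(C)$-vertices equal to identities. The single remaining leg into $N$ is forced to be the diagonal $a F(u) = b F(v) \colon F(C) \to D$.

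Next I check that this is a genuine $\nf$-independent cube of the form required for a $3$-completion. Every vertex except the apex lies in the image of $F$, and every morphism between such vertices is an identity or one of $F(u), F(v)$, hence lies in the image of $F$; the only arrows outside the image of $F$ land in $N = D$, as required. The chosen face is the given independent square, and each of the other five faces has one of its bottom or left morphisms equal to an identity, so it is $\nf$-independent by basic existence. The only commutativity condition that is not immediate is that the two faces meeting the diagonal leg commute, and this is exactly the equation $a F(u) = b F(v)$, i.e.\ the commutativity of the given square. Feeding this cube into the hypothesis that $F$ admits $3$-completions yields an object $N^*$ and $h \colon D \to F(N^*)$ for which the composites $F(A) \to D \to F(N^*)$ and $F(B) \to D \to F(N^*)$ both lie in the image of $F$, being the $3$-completion's components at the vertices $F(A)$ and $F(B)$. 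Setting $E := N^*$, this is precisely a $2$-completion of the original square; the parenthetical claim then follows from Proposition \ref{admitting-2-completions-implies-admitting-1-completions}.

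I expect the main obstacle to be choosing the correct degeneration rather than any computation. The naive attempt is to realise $F(A)$ and $F(B)$ as two adjacent neighbours of the initial corner of the cube; but then the face opposite the given square is a genuine square over the span $F(A) \leftarrow F(C) \to F(B)$ whose $\nf$-independence is not available to us, and which is in fact essentially the completion we are trying to produce. Collapsing all of the base directions onto $F(C)$ (so that even the third apex-neighbour becomes $F(C)$) is what allows basic existence to discharge every auxiliary face and breaks this circularity.
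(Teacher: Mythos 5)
Your proof is correct and is exactly the argument the paper intends: the paper's proof is ``analogous to Proposition \ref{admitting-2-completions-implies-admitting-1-completions}'', i.e.\ insert identity morphisms to degenerate the given square into an $\nf$-independent cube (using basic existence for the degenerate faces) and then apply the $3$-completion hypothesis. Your careful placement of the given square as a face through the apex, with the remaining four vertices collapsed to $F(C)$, is precisely the ``right places'' the paper alludes to, and your closing remark about why the naive placement fails is a correct diagnosis of the only real subtlety.
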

\begin{proof}
Analogous to Proposition \ref{admitting-2-completions-implies-admitting-1-completions}.
\end{proof}
\begin{theo}\label{admitting-3-completions-lifts-strong-3-amalgamation}
Let $F: \cc \to \cd$ be a faithful functor. Suppose that $\nf$ is an independence relation on $\cd$ that satisfies invariance and strong $3$-amalgamation. If $F$ admits $3$-completions then $F^{-1}(\nf)$ satisfies strong $3$-amalgamation.
\end{theo}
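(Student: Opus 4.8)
The plan is to mirror the proof of Theorem~\ref{admitting-2-completions-lifts-existence}(1), pushed up one dimension from squares to cubes. First I would take an $F^{-1}(\nf)$-independent horn in $\cc$, that is, a horn whose three constituent squares are all $F^{-1}(\nf)$-independent. Applying $F$ and unwinding Definition~\ref{lifting-independence}, each image square is $\nf$-independent, so we obtain an $\nf$-independent horn in $\cd$ all of whose objects lie in the image of $F$ and whose connecting morphisms are images of morphisms in $\cc$.

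Next I would invoke strong $3$-amalgamation for $\nf$ on $\cd$, applied to this image horn. This yields an apex object $N$ in $\cd$ together with morphisms $F(N_1) \to N$, $F(N_2) \to N$, $F(N_3) \to N$ completing the horn to an $\nf$-independent cube, i.e., one in which all six faces are $\nf$-independent. Crucially, this cube is exactly of the shape appearing in Definition~\ref{admitting-3-completions}: its bottom horn consists of objects and morphisms in the image of $F$, while only the apex $N$ need not be. Applying the hypothesis that $F$ admits $3$-completions then produces an object $N^*$ in $\cc$, morphisms $n_1\colon N_1 \to N^*$, $n_2\colon N_2 \to N^*$, $n_3\colon N_3 \to N^*$ in $\cc$, and a morphism $h\colon N \to F(N^*)$ in $\cd$, with each $F(n_i)$ equal to the composite of the cube morphism $F(N_i) \to N$ with $h$. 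Since $F$ is faithful, the corresponding cube in $\cc$ with apex $N^*$ commutes.

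It then remains to check that all six faces of this $\cc$-cube are $F^{-1}(\nf)$-independent. The three horn faces are independent by hypothesis. For each of the three new faces (those containing $N^*$), its image under $F$ is the corresponding $\nf$-independent face of the $\cd$-cube with apex $N$, post-composed along $h\colon N \to F(N^*)$, using the factorisations $F(n_i) = h \circ (F(N_i) \to N)$. By invariance of $\nf$---which says independence is preserved upon enlarging the ambient object---each such face remains $\nf$-independent with $F(N^*)$ in place of $N$. By faithfulness of $F$ the underlying square in $\cc$ commutes, and its image is $\nf$-independent, so it is $F^{-1}(\nf)$-independent. Hence all six faces lift, establishing strong $3$-amalgamation for $F^{-1}(\nf)$.

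The argument is largely bookkeeping once the two ingredients are lined up, so I do not anticipate a serious obstacle; the one point demanding care is confirming that the cube produced by strong $3$-amalgamation genuinely meets the hypothesis of Definition~\ref{admitting-3-completions} (objects and morphisms among image objects lying in the image of $F$, apex unrestricted), and then matching the factorisations $F(n_i) = h \circ (F(N_i) \to N)$ correctly to each of the three new faces before invoking invariance.
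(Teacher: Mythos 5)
Your proposal is correct and takes essentially the same approach as the paper: the paper's entire proof of Theorem \ref{admitting-3-completions-lifts-strong-3-amalgamation} reads ``Analogous to Theorem \ref{admitting-2-completions-lifts-existence}(1)'', and your write-up is exactly that one-dimension-higher argument spelled out. Your handling of the two delicate points---checking that the cube produced by strong $3$-amalgamation meets the hypothesis of Definition \ref{admitting-3-completions}, and using (the forward direction of) invariance along $h\colon N \to F(N^*)$ together with faithfulness to get the three apex faces $F^{-1}(\nf)$-independent---is precisely what the paper leaves implicit.
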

\begin{proof}
Analogous to Theorem \ref{admitting-2-completions-lifts-existence}(1).
\end{proof}
Unlike Theorem \ref{admitting-2-completions-lifts-existence}(2), we have not formulated a converse for Theorem \ref{admitting-3-completions-lifts-strong-3-amalgamation}. To replicate the argument for Theorem \ref{admitting-2-completions-lifts-existence}(2) we would require some form of $3$-uniqueness, which is much more subtle.
\begin{defi}\label{independent-galois-type-amalgamation}
Let $F: \cc \to \cd$ be a faithful functor and $\nf$ an independence relation on $\cd$. We say that $F$ has \emph{$\nf$-independent horn amalgamation} if the following holds. Suppose we are given an $\nf$-independent horn in the image of $F$ and an $\nf$-independent square like below.
\[
\vcenter{\vbox{\xymatrix@=1pc{
  & F(N_1) & & \\
  F(A) \ar[ur]^{F(a_1)} \ar[rr]^>>>>>{F(a_2)} & & F(N_2) & \\
  & F(B) \ar[uu]^<<<<{F(b_1)} \ar[rr]_<<<<{F(b_3)} & & F(N_3) \\
  F(M) \ar[uu] \ar[rr] \ar[ur] & & F(C) \ar[uu]_>>>{F(c_2)} \ar[ur]_{F(c_3)} &
}}}
\text{ and }
\vcenter{\vbox{\xymatrix@=2pc{
  F(A) \ar[r] & N \\
  F(M) \ar[u] \ar[r] & F(N_3) \ar[u]
}}}
\]
Then there exists a $2$-completion $N \to F(N^*)$ with $a^*: A \to N^*$ and $n_3: N_3 \to N^*$ for the square on the right, such that the squares below can be amalgamated in $\cc$. That is, the dashed arrows below exist and make everything commute.
\[
\vcenter{\vbox{\xymatrix@=2pc{
  & N_1 \ar@{-->}[r] & \bullet \\
  A \ar[rr]_<<<{a^*} \ar[ur]^{a_1} & & N^* \ar@{-->}[u] \\
  M \ar[u] \ar[r] & B \ar[ur]_{n_3 b_3} \ar[uu]^<<<{b_1} &
}}}
\quad \text{ and } \quad
\vcenter{\vbox{\xymatrix@=2pc{
  & N_2 \ar@{-->}[r] & \bullet \\
  A \ar[rr]_<<<{a^*} \ar[ur]^{a_2} & & N^* \ar@{-->}[u] \\
  M \ar[u] \ar[r] & C \ar[ur]_{n_3 c_3} \ar[uu]^<<<{c_2} &
}}}
\]
\end{defi}

\begin{propo}\label{independent-amalgamation-equivalent}
Let $F: \cc \to \cd$ be a faithful functor with $\cc$ having the amalgamation property. Let $\nf$ an independence relation on $\cd$ satisfying invariance. Then the following condition is equivalent to $F$ having $\nf$-independent horn amalgamation. Suppose we are given a commuting diagram consisting of the solid arrows below, where the morphisms between objects in the image of $F$ are also in the image of $F$, and every square of solid arrows is $\nf$-independent. Then we can find the dashed arrows, making everything commute.
\[
\xymatrix@=1pc{
  & F(N_1) \ar@{-->}@/^1pc/[rrr]^{F(n_1)} & & N \ar@{-->}[r]_(0.4)h & F(N^*) \\
  F(A) \ar[ur] \ar[rr] \ar[urrr] & & F(N_2) \ar@{-->}@/_1pc/[urr]_(0.65){F(n_2)} & & \\
  & F(B) \ar[uu] \ar[rr] & & F(N_3) \ar[uu] \ar@{-->}@/_1pc/[uur]_{F(n_3)} & \\
  F(M) \ar[uu] \ar[rr] \ar[ur] & & F(C) \ar[uu] \ar[ur] & &
}
\]
\end{propo}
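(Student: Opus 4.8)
The plan is to prove the two implications separately, using faithfulness of $F$ to transport equalities of morphisms between $\cc$ and $\cd$, and the amalgamation property of $\cc$ for the harder direction. A useful preliminary observation is that the squares built from the solid arrows of the displayed diagram are exactly the three faces of the $\nf$-independent horn together with the square $F(A) \nf^N_{F(M)} F(N_3)$ (read off from $F(A) \to N$, $F(N_3) \to N$ and $F(M) \to F(A)$, $F(M) \to F(N_3)$). Hence the hypothesis of the displayed condition matches verbatim the hypothesis of $\nf$-independent horn amalgamation in Definition \ref{independent-galois-type-amalgamation}, so in both directions the two conclusions are compared under the same input data.

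First I would show that the displayed condition implies $\nf$-independent horn amalgamation. Applying the condition to the given horn and square produces $F(N^*)$ together with $n_1, n_2, n_3$ in $\cc$ and $h \colon N \to F(N^*)$ in $\cd$ making the whole diagram commute. Reading off the commutativities $h \circ (F(A) \to N) = F(n_1 a_1)$ and $h \circ (F(N_3) \to N) = F(n_3)$ shows that $(N^*, h)$, with $a^* := n_1 a_1$ and $n_3$, is a $2$-completion of the square in the sense of Definition \ref{admitting-2-completions}. For the two amalgamations demanded by Definition \ref{independent-galois-type-amalgamation} I would simply take $\bullet := N^*$ with amalgamating maps $n_1$ (resp.\ $n_2$) out of $N_1$ (resp.\ $N_2$) and $\id_{N^*}$ out of $N^*$. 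The required equalities $n_1 a_1 = a^*$, $n_1 b_1 = n_3 b_3$, $n_2 a_2 = a^*$ and $n_2 c_2 = n_3 c_3$ all hold because, after applying $F$, both sides become parallel paths in the commuting diagram and therefore agree; faithfulness then lifts these equalities back to $\cc$.

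For the converse, assume $\nf$-independent horn amalgamation. Applying Definition \ref{independent-galois-type-amalgamation} to the horn and the square $F(A) \nf^N_{F(M)} F(N_3)$ yields a $2$-completion $h_0 \colon N \to F(E)$ with $a^* \colon A \to E$ and $n_3' \colon N_3 \to E$, together with amalgamations of the two squares: an object $P_1$ with $p_1 \colon N_1 \to P_1$, $q_1 \colon E \to P_1$, and an object $P_2$ with $p_2 \colon N_2 \to P_2$, $q_2 \colon E \to P_2$. The key point is that $q_1$ and $q_2$ form a span $P_1 \xleftarrow{q_1} E \xrightarrow{q_2} P_2$ in $\cc$, which I would complete, using the amalgamation property of $\cc$, to an object $N^*$ with maps $r_1 \colon P_1 \to N^*$, $r_2 \colon P_2 \to N^*$ satisfying $r_1 q_1 = r_2 q_2$. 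Setting $n_1 := r_1 p_1$, $n_2 := r_2 p_2$, $n_3 := r_1 q_1 n_3'$ and $h := F(r_1 q_1) \circ h_0$, a routine chase using the amalgamation equations $p_1 a_1 = q_1 a^*$, $p_1 b_1 = q_1 n_3' b_3$ (and their counterparts $p_2 a_2 = q_2 a^*$, $p_2 c_2 = q_2 n_3' c_3$) verifies that all triangles and squares of the displayed condition commute, furnishing the dashed arrows.

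I expect the main obstacle to be the converse direction, and specifically the step of merging the two separately obtained amalgamations $P_1$ and $P_2$ into a single $F(N^*)$: this is exactly where the amalgamation property of $\cc$ is essential, and it succeeds precisely because both amalgamations share the common leg $E$ produced by the $2$-completion. The remaining work is bookkeeping—tracking the many morphisms and confirming, via faithfulness of $F$, that every path in the cube-shaped diagram is accounted for—while the standing hypotheses of faithfulness and invariance of $\nf$ serve mainly to make the two formulations refer to the same $\nf$-independent squares.
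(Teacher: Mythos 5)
Your proof is correct and follows essentially the same route as the paper: for the forward direction you set $a^* = n_1 a_1$ and observe that $N^*$ itself (with $n_1$, resp.\ $n_2$, and the identity) amalgamates the two squares, while for the converse you apply $\nf$-independent horn amalgamation to obtain two amalgamations sharing the $2$-completion leg $E$ and merge them using the amalgamation property of $\cc$, exactly as in the paper's proof (where $E$ is called $N'$). The only difference is presentational---you name all the morphisms explicitly where the paper argues by diagram---and your closing observation that invariance is not actually invoked in the verification is accurate.
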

\begin{proof}
In the proof below we implicitly use that $F$ is faithful, which implies that a diagram in $\cc$ commutes if and only if its image under $F$ commutes in $\cd$.

To see that this condition implies $\nf$-independent horn amalgamation, we take $a^*$ to be $A \to N_1 \xrightarrow{n_1} N^*$. Then $h$ together with $a^*$ and $n_3$ forms the required $2$-completion.

Conversely, we can apply $\nf$-independent horn amalgamation to obtain a $2$-completion $N'$ for the square involving $N$. Then, working in $\cc$, the dashed arrows in the diagram below exist by assumption. Amalgamating over $N'$ then yields the dotted arrows, which gives the required $N^*$.
\[
\xymatrix@=1pc{
 & & & \bullet \ar@{{}*{\cdot}>}[r] & N^* \\
 & N_1 \ar@/^1pc/@{-->}[urr] & & N' \ar@{-->}[u] \ar@{-->}[r] & \bullet \ar@{{}*{\cdot\!}>}[u] \\
 A \ar[rr] \ar[ur] \ar[urrr] & & N_2 \ar@/_1pc/@{-->}[urr] & & \\
 & B \ar[uu] \ar[rr] & & N_3 \ar[uu] & \\
 M \ar[rr] \ar[uu] \ar[ur] & & C \ar[uu] \ar[ur] & &
}
\]
\end{proof}
The formulation of $\nf$-independent horn amalgamation might look more complicated than admitting 3-completions. However, it is sometimes more practical, as illustrated in the example below.
\begin{exam}\label{graphs-independent-horn-amalgamation}
Consider the category $\Gra_{\Emb}$ of graphs and graph embeddings and the forgetful functor $F: \Gra_{\Emb} \to \Set_{\Mono}$. One easily checks that two squares of graphs, like in the diagram below, can be amalgamated if and only if for any $a \in A$ and $b \in B$ they have an edge in $N$ precisely when they have an edge in $N'$ (compare this also to the discussion about types, Remark \ref{reflecting-amalgamation-of-squares-vs-types}).
\[
\xymatrix@=1pc{
 & N & \\
 A \ar[rr] \ar[ur] & & N' \\
 M \ar[u] \ar[r] & B \ar[uu] \ar[ur] &
}
\]
Let $\nf$ on $\Set_{\Mono}$ be given by those squares that are pullbacks. We will show that $F$ has $\nf$-independent horn amalgamation. Suppose we are given a horn and square like in Definition \ref{independent-galois-type-amalgamation}. We make $N$ into a graph $N^*$ as follows. For any $x, y \in F(A)$ (resp.\ any $x,y \in F(N_3)$) we have an edge between $x$ and $y$ if and only if they have an edge in $A$ (resp.\ in $N_3$). Furthermore, for any $a \in F(A)$ and any $b \in F(B) \subseteq F(N_3)$ (resp.\ any $c \in F(C) \subseteq F(N_3)$) we have an edge if and only if $a$ and $b$ have an edge in $N_1$ (resp.\ $a$ and $c$ have an edge in $N_2$). This last construction is well-defined because $B \cap C = M$ in $N_3$. There are no other edges in $N^*$. This construction makes $N^*$ into a $2$-completion and the two relevant squares can be amalgamated by our previous condition for square amalgamation.
\end{exam}
\begin{theo}\label{independent-amalgamation-and-uniqueness}
Let $F: \cc \to \cd$ be a faithful functor and suppose that $\nf$ is an invariant independence relation on $\cd$. If $\nf$ satisfies $3$-amalgamation and $F$ has $\nf$-independent horn amalgamation then $F^{-1}(\nf)$ satisfies $3$-amalgamation.

In the other direction we have that if $\nf$ satisfies uniqueness, $F$ admits $1$-completions and $F^{-1}(\nf)$ satisfies $3$-amalgamation then $F$ satisfies $\nf$-independent horn amalgamation.
\end{theo}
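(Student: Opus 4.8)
The statement splits into two implications, which I would prove separately.

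\textbf{Forward direction.} Assume $\nf$ has $3$-amalgamation and $F$ has $\nf$-independent horn amalgamation; I want $3$-amalgamation for $F^{-1}(\nf)$. Starting from an $F^{-1}(\nf)$-independent horn in $\cc$, the plan is to apply $F$ to obtain an $\nf$-independent horn lying in the image of $F$, and then apply $3$-amalgamation for $\nf$ in $\cd$. This produces an apex $N$, morphisms $F(N_i) \to N$, and the diagonal independent square $F(A) \nf_{F(M)}^N F(N_3)$. This is exactly the configuration in the reformulation of Proposition~\ref{independent-amalgamation-equivalent}: an $\nf$-independent horn in the image of $F$ with every solid square independent, together with the independent diagonal. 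Invoking $\nf$-independent horn amalgamation in that form yields a single object $N^* \in \cc$, morphisms $F(n_i) : F(N_i) \to F(N^*)$, and a map $h : N \to F(N^*)$ making the whole cube commute; since $F$ is faithful the $n_i$ are honest $\cc$-morphisms and the cube already commutes in $\cc$. Finally, applying invariance for $\nf$ to $F(A) \nf_{F(M)}^N F(N_3)$ along $h$ gives $F(A) \nf_{F(M)}^{F(N^*)} F(N_3)$, that is $A \nf_M^{N^*} N_3$, completing $3$-amalgamation. (Since Proposition~\ref{independent-amalgamation-equivalent} is used to obtain the single apex, this presumes $\cc$ has the amalgamation property, the standard setting here; without it one only gets two separate amalgams and must combine them.)

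\textbf{Backward direction.} Assume $\nf$ has uniqueness, $F$ admits $1$-completions, and $F^{-1}(\nf)$ has $3$-amalgamation; I want $F$ to have $\nf$-independent horn amalgamation. Given an $\nf$-independent horn in the image of $F$ and an independent square $F(A) \nf_{F(M)}^N F(N_3)$, I would first read the horn back, via faithfulness, as an $F^{-1}(\nf)$-independent horn in $\cc$ and apply $3$-amalgamation there. This gives an apex $N^{**} \in \cc$, morphisms $n_i^0 : N_i \to N^{**}$ completing the cube, and $A \nf_M^{N^{**}} N_3$, i.e.\ $F(A) \nf_{F(M)}^{F(N^{**})} F(N_3)$. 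Now $N$ and $F(N^{**})$ are two $\nf$-independent completions of the \emph{same} base span $F(A) \leftarrow F(M) \to F(N_3)$ (both bases being the $F$-image of the horn's span), so uniqueness for $\nf$ amalgamates them into an object $N'$ with $N \to N' \leftarrow F(N^{**})$. I would then take a $1$-completion of $F(N^{**}) \to N'$, producing $g : N^{**} \to N^*$ in $\cc$ and $h : N' \to F(N^*)$ with $F(g) = h \circ (F(N^{**}) \to N')$. The desired data is $a^* = g \circ (A \to N^{**})$, $n_3 = g \circ (N_3 \to N^{**})$, and the $2$-completion $N \to N' \xrightarrow{h} F(N^*)$.

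\textbf{Verification and main obstacle.} What remains is bookkeeping. That $N \to N' \xrightarrow{h} F(N^*)$ is a genuine $2$-completion follows from the uniqueness identities $F(A) \to N \to N' = F(A) \to F(N^{**}) \to N'$ (and likewise for $F(N_3)$) combined with $F(g) = h \circ (F(N^{**}) \to N')$, which together force $F(a^*)$ and $F(n_3)$ to factor through $N$ as required. For the two amalgamations I expect the common witness $N^*$ to work: the first square amalgamates via $N_1 \xrightarrow{g\, n_1^0} N^*$ and $\id_{N^*}$, the second via $N_2 \xrightarrow{g\, n_2^0} N^*$ and $\id_{N^*}$, and commutativity of the resulting triangles reduces to commutativity of the $\cc$-cube (for instance $(N_3 \to N^{**}) \circ b_3 = n_1^0 \circ b_1$ as maps $B \to N^{**}$). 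The main obstacle is precisely this backward direction: one must track the four auxiliary objects $N$, $N^{**}$, $N'$, $N^*$ and verify that a single $1$-completion simultaneously carries $a^*$, $n_3$, and both amalgamating legs into the same $N^*$. The forward direction is essentially routine once one recognizes that $3$-amalgamation for $\nf$ produces exactly the input demanded by Proposition~\ref{independent-amalgamation-equivalent}.
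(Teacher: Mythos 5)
Your proposal is correct and follows essentially the same route as the paper's proof: forward, apply $3$-amalgamation for $\nf$ in $\cd$ and then the reformulation of Proposition~\ref{independent-amalgamation-equivalent} followed by invariance; backward, apply $3$-amalgamation for $F^{-1}(\nf)$ in $\cc$, amalgamate the two independent completions of $F(A) \leftarrow F(M) \to F(N_3)$ by uniqueness, and close off with a $1$-completion, defining $a^*$ and $n_3$ as composites through the $\cc$-apex exactly as the paper does. Your parenthetical caveat about the amalgamation property in $\cc$ is apt, since the paper's proof likewise invokes the condition from Proposition~\ref{independent-amalgamation-equivalent}, which carries that hypothesis.
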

\begin{proof}
Given an independent horn in $\cc$, its image is an independent horn in $\cd$. Applying $3$-amalgamation of $\nf$ then yields a commuting cube like below on the left, such that the square like below on the right is independent.
\[
\vcenter{\vbox{\xymatrix@=1pc{
  & F(N_1) \ar[rr] & & N \\
  F(A) \ar[ur] \ar[rr] & & F(N_2) \ar[ur] & \\
  & F(B) \ar[uu] \ar[rr] & & F(N_3) \ar[uu] \\
  F(M) \ar[uu] \ar[rr] \ar[ur] & & F(C) \ar[uu] \ar[ur] &
}}}
\quad \quad \quad
\vcenter{\vbox{\xymatrix@=2pc{
  F(A) \ar[r] & N \\
  F(M) \ar[u] \ar[r] & F(N_3) \ar[u]
}}}
\]
Applying $\nf$-independent horn amalgamation---or more precisely: the condition from Proposition \ref{independent-amalgamation-equivalent}---we obtain a commuting cube in $\cc$ like below
\[
\xymatrix@=1pc{
  & N_1 \ar[rr] & & N^* \\
  A \ar[ur] \ar[rr] & & N_2 \ar[ur] & \\
  & B \ar[uu] \ar[rr] & & N_3 \ar[uu] \\
  M \ar[uu] \ar[rr] \ar[ur] & & C \ar[uu] \ar[ur] &
}
\]
By invariance   $F(A) \nf^{F(N^*)}_{F(M)} F(N_3)$. So by construction of the above cube, its diagonal square involving $M$, $A$, $N_3$ and $N^*$ is $F^{-1}(\nf)$-independent, proving $3$-amalgamation.

Conversely, suppose that $\nf$ satisfies uniqueness, $F$ admits $1$-completions and that $F^{-1}(\nf)$ satisfies $3$-amalgamation. Let the setup be as in Definition \ref{independent-galois-type-amalgamation}. In particular, we have an independent horn in $\cc$ like the solid arrows below on the left, which we can complete with the dashed arrows using $3$-amalgamation.
\[
\vcenter{\vbox{\xymatrix@=1pc{
  & N_1 \ar@{-->}[rr] & & N' \\
  A \ar[ur] \ar[rr] & & N_2 \ar@{-->}[ur] & \\
  & B \ar[uu] \ar[rr] & & N_3 \ar@{-->}[uu] \\
  M \ar[uu] \ar[rr] \ar[ur] & & C \ar[uu] \ar[ur] &
}}}
\quad
\quad
\quad
\vcenter{\vbox{\xymatrix@=1pc{
  & F(N') \ar@{-->}[r] & N'' \\
  F(A) \ar[ur] \ar[rr] & & N \ar@{-->}[u] \\
  F(M) \ar[u] \ar[r] & F(N_3) \ar[uu] \ar[ur] &
}}}
\]
In particular, we have two independent squares like the solid arrows above on the right. By uniqueness for $\nf$ we thus find the dashed arrows making the diagram above on the right commute. Now, using the assumption that $F$ admits $1$-completions we find $N'' \to F(N^*)$ and a morphism $f: N' \to N^*$ in $\cc$ such that $F(f)$ is the composite $F(N') \to N'' \to F(N^*)$. Take $a^*$ to be $A \to N' \to N^*$ and $n_3$ to be $N_3 \to N' \to N^*$, then these form the required $\nf$-indpendent horn amalgamation.
\end{proof}

\section{Lifting base monotonicity}
\label{sec:lifting-base-monotonicity}
\begin{theo}\label{lift-base-monotonicity}
Let $\cc$ and $\cd$ be categories whose morphisms are monomorphisms that both have binary joins of subobjects, which are preserved by $F: \cc \to \cd$. If $\nf$ is an independence relation on $\cd$ satisfying base montonicity, monotonicity and invariance then $F^{-1}(\nf)$ satisfies base monotonicity.
\end{theo}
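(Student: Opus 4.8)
The plan is to reduce everything to Lemma \ref{base-monotonicity-with-joins}, which already bundles invariance, monotonicity and base monotonicity into a single statement whose crucial feature is that it produces the base-monotonicity witness without enlarging the ambient object (i.e.\ with $N = M$). First I would unwind what base monotonicity for $F^{-1}(\nf)$ demands. We are handed the solid part of the base-monotonicity diagram in $\cc$: morphisms $C \to A \to M$ and $C \to B \to D \to M$ forming a commuting square, together with $A \nf_C^M D$, the latter meaning by definition that $F(A) \nf_{F(C)}^{F(M)} F(D)$ holds in $\cd$. The goal is to produce $A'$, $N$ and the dashed arrows in $\cc$ with $A' \nf_B^N D$, that is, with $F(A') \nf_{F(B)}^{F(N)} F(D)$.

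Next I would make the one guess that works: take $N = M$ and $A' = A \vee B$, the join in $\cc$ of the subobjects $A \hookrightarrow M$ and $B \hookrightarrow M$ (the second of these being the composite $B \to D \to M$), which exists because $\cc$ has binary joins of subobjects and all its morphisms are monomorphisms. The required dashed arrows are then the two coprojections $A \to A \vee B$ and $B \to A \vee B$ into the join, the subobject inclusion $A \vee B \hookrightarrow M = N$, and the identity $M \to N$. Checking commutativity is routine: by construction of the join, both $A \to A \vee B \hookrightarrow M$ and $B \to A \vee B \hookrightarrow M$ recover the original inclusions $A \to M$ and $B \to D \to M$, and in particular $B \to A' \to N$ agrees with $B \to D \to N$, so the target square really sits over the correct data.

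Then I would apply $F$ and use join-preservation. Since $F$ preserves binary joins of subobjects, $F(A') = F(A \vee B) = F(A) \vee F(B)$ computed inside $F(M)$. Now I apply Lemma \ref{base-monotonicity-with-joins} \emph{inside} $\cd$---whose hypotheses (morphisms are monomorphisms, binary joins of subobjects exist, and $\nf$ has invariance, monotonicity and base monotonicity) are precisely the hypotheses the theorem places on $\cd$---to the independent square $F(A) \nf_{F(C)}^{F(M)} F(D)$ together with the factorisation $F(C) \to F(B) \to F(D)$. The lemma yields $F(A) \vee F(B) \nf_{F(B)}^{F(M)} F(D)$, i.e.\ $F(A') \nf_{F(B)}^{F(N)} F(D)$, which is exactly $A' \nf_B^N D$ for $F^{-1}(\nf)$.

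I do not expect a serious obstacle: the whole content is carried by Lemma \ref{base-monotonicity-with-joins}, which upgrades plain base monotonicity into the join-based form with $N = M$, and by the preservation hypothesis on $F$, which is exactly what lets the join taken in $\cc$ be identified with the corresponding join in $\cd$ after applying $F$. The only point requiring care is the bookkeeping of subobjects: one must form $A \vee B$ with respect to the correct embeddings $A \hookrightarrow M$ and $B \to D \hookrightarrow M$, so that applying $F$ lands on the join $F(A) \vee F(B)$ to which the lemma is applied and so that the produced square has the intended base $B$.
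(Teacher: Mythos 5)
Your proposal is correct and follows essentially the same route as the paper: both take $N = M$ and $A' = A \vee B$ (the join of $A$ with the intermediate object inside $M$), use the preservation hypothesis to identify $F(A \vee B)$ with $F(A) \vee F(B)$, and invoke Lemma \ref{base-monotonicity-with-joins} in $\cd$ to get $F(A) \vee F(B) \nf^{F(M)}_{F(B)} F(D)$, which is exactly the lifted independence statement $A' \nf^M_B D$. The only difference is cosmetic ordering (you build the join in $\cc$ first and then apply the lemma; the paper applies the lemma first and then identifies the join), so there is nothing to correct.
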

\begin{proof}
Suppose we are given an $F^{-1}(\nf)$-independent square like below on the left. Then $F(A) \nf^{F(M)}_{F(C)} F(B)$ and so $F(A) \vee F(B') \nf^{F(M)}_{F(B')} F(B)$ by Lemma \ref{base-monotonicity-with-joins}. We have $F(A) \vee F(B') = F(A \vee B')$, and so $F(A \vee B') \nf^{F(M)}_{F(B')} F(B)$. We thus conclude that $A \vee B' \nf^M_{B'} B$, yielding the diagram below on the right which proves base monotonicity.
\[
\vcenter{\vbox{\xymatrix@=2pc{
A \ar[rr] & & M \\
C \ar[u] \ar[r] \ar@{}[urr]|-{F^{-1}(\nf)} & B' \ar[r] & B \ar[u]
}}}
\implies
\vcenter{\vbox{\xymatrix@=2pc{
  F(A) \vee F(B') \ar[r] & F(M) \\
  F(B') \ar[r] \ar[u] \ar@{}[ur]|-{\nf} & F(B) \ar[u]
}}}
\implies
\vcenter{\vbox{\xymatrix@=2pc{
A \ar[r] & A \vee B' \ar[r] & M \\
C \ar[u] \ar[r] & B' \ar[r] \ar[u] \ar@{}[ur]|-{F^{-1}(\nf)} & B \ar[u]
}}}
\]
\end{proof}
\begin{exams}\label{motivating-examples-base-monotonicity}
We consider our motivating examples of bilinear spaces and exponential fields (Examples \ref{motivating-examples}).
\begin{enumerate}
\item The join of two bilinear subspaces $A, B \subseteq V$ is simply given by their span. So the forgetful functor $F: \Bil_K \to \Vec_K$ preserves finite joins of subobjects and hence lifts base monotonicity.
\item The canonical independence relation $\ind^{EA}$ on $\EAF$, which is the lift of $\ind^{\td}$ along $F: \EAF \to \ACF$ is known to not satisfy base monotonicity, so we know that $F$ cannot preserve binary joins of subobjects. The failure of base monotonicity is spelled out in detail in \cite[Example 4.23]{AHKK}. We will adjust that example to show how preserving binary joins fails.

Let $C$ be any EA-field and let $K$ be an algebraically closed field containing $C$ as well as elements $a, b, d$ that are algebraically independent over $C$. Set $A = C(a)^{\alg}$ and $B = C(b)^{\alg}$, where $(-)^{\alg}$ is the algebraic closure, and choose any exponential maps on them extending the one from $C$. Pick any $u \in C(a,b)^{\alg}$ that is not in $A + B$ (e.g., $u = ab$). We can then extend those exponential maps from $A$ and $B$ to all of $K$ such that $\exp(u) = d$. We now have that the join of $A$ and $B$ as algebraically closed subfields of $K$ is $C(a,b)^{\alg}$, whereas their join as EA-subfields is $\langle A \cup B \rangle^{EA}_K$, which contains $d$ and is thus bigger than $C(a,b)^{\alg}$. We conclude that $F$ does not preserve binary joins.
\end{enumerate}
\end{exams}
We finish this section by discussing a way to compute joins of subobjects in categories of the form $\cc_\cm$, and consequently how these are preserved by left multiadjoints. For this we will use multipushouts. These were implicitly defined earlier when we discussed multicolimits (see before Corollary \ref{multi}), but as we will actually use them here we recall their precise definition.
\begin{defi}
\label{multipushout}
Let $A \xleftarrow{f} C \xrightarrow{g} B$ be a span of morphisms. A \emph{multipushout} of this span consists of a set $\{A \xrightarrow{h_i} P_i \xleftarrow{k_i} B\}_{i \in I}$ of cocones, such that for any cocone $A \xrightarrow{a} D \xleftarrow{b} B$ there is a unique $i \in I$ and a unique $u: P_i \to D$ such that the following diagram commutes
\[
\xymatrix@=2pc{
A \ar[r]^{h_i} \ar@/^1.5pc/[rr]^a & P_i \ar[r]^{u} & D \\
C \ar[u]^{f} \ar[r]_{g} & B \ar[u]_{k_i} \ar@/_1pc/[ur]_{b} &
}
\]
We refer to each of the cocones in the set $\{A \xrightarrow{h_i} P_i \xleftarrow{k_i} B\}_{i \in I}$ as an \emph{instance} of the multipushout.
\end{defi}
We adjust the well-known construction of joins of subobjects using pushouts to one using multipushouts, and relative to a factorisation system.
\begin{lemma}\label{computing-joins}
Let $\cc$ be a category with a factorisation system $(\ce, \cm)$, where $\cm$ consists of monomorphisms. Suppose we are given a commuting square
\[
\xymatrix@=2pc{
A \ar@{^{(}->}[r]^a & D \\
C \ar[u] \ar[r] & B \ar@{_{(}->}[u]_b
}
\]
with $a, b \in \cm$ such that the multipushout of $A \leftarrow C \to B$ exists. Then we can compute the join $A \vee B$ of the subobjects $A, B \leq D$ in $\cc_\cm$ as follows. First, we let $P$ be the instance of the multipushout that admits a morphism into $D$. Then $A \vee B$ is the $(\ce, \cm)$-factorisation of this morphism $P \to D$:
\[
\xymatrix@=2pc{
A \ar[r] \ar@{^{(}->}@/^1.5pc/[rrr]^a & P \ar@{->>}[r] & A \vee B \ar@{^{(}->}[r] & D \\
C \ar[u] \ar[r] & B \ar[u] \ar@{^{(}->}@/_1pc/[urr]_b & &
}
\]
\end{lemma}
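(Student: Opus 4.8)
The plan is to verify directly that the object produced by the construction satisfies the universal property of the join $A \vee B$ in the poset of $\cm$-subobjects of $D$: namely, that it is an upper bound of $A$ and $B$, and that it sits below every common upper bound. First I would fix notation. Since the given square commutes, $A \xrightarrow{a} D \xleftarrow{b} B$ is a genuine cocone on the span $A \leftarrow C \to B$, so applying the universal property of the multipushout yields a \emph{unique} instance $P$ together with a unique $u\colon P \to D$ compatible with $a$ and $b$; this is the instance ``admitting a morphism into $D$''. Factoring $u = m \circ e$ with $e \in \ce$ and $m \in \cm$, I set $A \vee B := \cod(e) = \dom(m)$, so that $m\colon A \vee B \to D$ is the candidate subobject.

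\textbf{Upper bound.} Writing $h\colon A \to P$ and $k\colon B \to P$ for the coprojections of the instance, the composites $e h$ and $e k$ are the candidate comparison maps into $A \vee B$. Postcomposing with $m$ gives $m e h = u h = a$ and $m e k = u k = b$. Since $a, m \in \cm$ and the right class of a factorisation system is coherent, the identity $m(e h) = a \in \cm$ together with $m \in \cm$ forces $e h \in \cm$, and likewise $e k \in \cm$. Hence $A \leq A \vee B$ and $B \leq A \vee B$ as $\cm$-subobjects of $D$, compatibly with $m$.

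\textbf{Least upper bound (the crux).} Let $n\colon E \to D$ be any $\cm$-subobject dominating both, via $\cm$-maps $s\colon A \to E$ and $t\colon B \to E$ with $n s = a$ and $n t = b$. I would first check that $(s,t)$ is a cocone on $A \leftarrow C \to B$: precomposing both legs with the maps out of $C$ and postcomposing with the monomorphism $n$ yields the common map $C \to D$, so the two legs already agree by monicity of $n$. By the multipushout property this cocone factors through a unique instance via some $v$; postcomposing that factorisation with $n$ exhibits a factorisation of the original cocone $(a,b)$ into $D$, so by the uniqueness clause the instance must be $P$ and $n v = u$. Now the square $n v = u = m e$ has $e \in \ce$ on the left and $n \in \cm$ on the right, so orthogonality produces a diagonal $d\colon A \vee B \to E$ with $n d = m$ and $d e = v$. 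Finally $n d = m \in \cm$ with $n \in \cm$ gives $d \in \cm$ by coherence, so $A \vee B \leq E$, as required.

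\textbf{Main obstacle.} The one genuinely non-routine point is matching instances of the multipushout: because a multipushout has many instances, one must argue that the cocone into $E$ is carried by the \emph{same} instance $P$ that carries the cocone into $D$. This is precisely where the monicity of $n$ (to recognise the composite cocone into $D$) and the uniqueness of the instance in the multipushout are indispensable; for an ordinary pushout this step is vacuous, and it is the only place where the passage from pushouts to multipushouts demands care.
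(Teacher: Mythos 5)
Your proof is correct and follows essentially the same route as the paper's: upper bounds via the cancellation property of $\cm$ applied to the factorisation $u = me$, the cocone into $E$ obtained from monicity of $n$, matching of multipushout instances via the uniqueness clause (your ``crux'' is exactly the step the paper handles by showing $P' = P$), and the comparison map $A \vee B \to E$ produced by orthogonality of $\ce$ against $\cm$ followed by cancellation. The only cosmetic difference is that where the paper invokes left-cancellability of $\cm$ (citing Freyd--Kelly), you use the weaker coherence property ($gf, g \in \cm$ implies $f \in \cm$), which indeed suffices here since in both applications the outer morphism lies in $\cm$.
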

\begin{proof}
Since $\cm$ is a composable and left-cancellable class of monomorphisms
(see \cite[Proposition 2.1.1]{FK}), the morphism $A \to P \to A \vee B$ is in $\cm$, and so $A \leq A \vee B$ as subobjects in $\cc_\cm$. Similarly $B \leq A \vee B$. Now let $E \leq D$ be a subobject in $\cc_\cm$ such that $A, B \leq E$. Then we have the following diagram in $\cc$:
\[
\xymatrix@=2pc{
A \ar@{^{(}->}[r] \ar@{^{(}->}@/^1.5pc/[rr]^a & E \ar@{^{(}->}[r] & D \\
C \ar[u] \ar[r] & B \ar@{_{(}->}[u] \ar[u] \ar@{^{(}->}@/_1pc/[ur]_b &
}
\]
Any path from $C$ to $D$ gives the same morphism, and so the square involving $C, A, B, E$ commutes because $E \to D$ is in $\cm$ and is thus a monomorphism. Let $P'$ be the instance of the multipushout of $A \leftarrow C \to B$ admitting a morphism into $E$. Composing this morphism $P' \to E$ with $E \to D$ gives a morphism into the cocone $A \xrightarrow{a} D \xleftarrow{b} B$, and so $P' = P$.

So the morphism $P \to D$ factors through $E$, and as $E \to D$ is in $\cm$ with $(\ce, \cm)$ being a factorisation system, we have that $A \vee B \to D$ factors through $E$. By left-cancellability of $\cm$, this factorisation $A \vee B \to E$ must be in $\cm$, establishing that $A \vee B \leq E$ as subobjects of $D$ in $\cc_\cm$.
\end{proof}

\begin{coro}\label{left-multiadjoint-lifts-base-monotonicity}
Let $F: \cc \to \cd$ be a left multiadjoint and let $(\ce, \cm)$ be a factorisation system on $\cd$ with $\cm$ consisting of monomorphisms. Suppose that:
\begin{enumerate}
\item $\cc$ and $\cd$ have multipushouts;
\item $(F^{-1}(\ce), F^{-1}(\cm))$ is a factorisation system on $\cc$;
\item $F^{-1}(\cm)$ consists of monomorphisms;
\item $\cc$ and $\cd$ satisfy the right Ore condition with respect to cospans in $F^{-1}(\cm)$ (resp.\ $\cm$), so for any cospan $A \to D \leftarrow B$ of morphisms in $F^{-1}(\cm)$ (resp.\ $\cm$) there is a span $A \leftarrow C \to B$ (not necessarily in $F^{-1}(\cm)$ or $\cm$) making the relevant square commute.
\end{enumerate}
Then $\cc_{F^{-1}(\cm)}$ and $\cd_\cm$ have binary joins of subobjects and the restriction $F: \cc_{F^{-1}(\cm)} \to \cd_\cm$ preserves these joins.

In particular, if $\nf$ is an independence relation on $\cd_\cm$ satisfying base monotonicity, monotonicity and invariance then $F^{-1}(\ind)$ satisfies base monotonicity on $\cc_{F^{-1}(\cm)}$.
\end{coro}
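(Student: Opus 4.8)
The plan is to reduce everything to Lemma~\ref{computing-joins}, which computes binary joins via multipushouts and factorisations, and then to Theorem~\ref{lift-base-monotonicity}, which lifts base monotonicity once joins are shown to be preserved.

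First I would establish that $\cc_{F^{-1}(\cm)}$ has binary joins of subobjects; the argument for $\cd_\cm$ is verbatim the same, using the given factorisation system $(\ce,\cm)$ in place of $(F^{-1}(\ce),F^{-1}(\cm))$. Two subobjects $A, B \leq D$ in $\cc_{F^{-1}(\cm)}$ amount to a cospan $A \to D \leftarrow B$ with both legs in $F^{-1}(\cm)$. By the right Ore condition (assumption~(4)) there is a span $A \leftarrow C \to B$ making the resulting square commute, and by assumption~(1) the multipushout of this span exists. Assumptions~(2) and (3) supply exactly the factorisation system with monomorphic right class that is needed to invoke Lemma~\ref{computing-joins}, which then produces $A \vee B$ as the factorisation $P \to A \vee B \to D$, where $P$ is the instance of the multipushout admitting a morphism into $D$.

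To show $F$ preserves these joins, I would apply $F$ to the construction above. The span $A \leftarrow C \to B$ is a nonempty connected diagram, so by Corollary~\ref{multi} the functor $F$ sends its multipushout to a multipushout of $F(A) \leftarrow F(C) \to F(B)$; in particular each instance $P_i$ is sent to an instance $F(P_i)$. The chosen morphism $P \to D$ maps to $F(P) \to F(D)$, and by the uniqueness clause of the multipushout universal property $F(P)$ is therefore precisely the instance admitting a morphism into $F(D)$, i.e.\ the instance relevant for computing $F(A) \vee F(B)$ in $\cd_\cm$ via Lemma~\ref{computing-joins}. Applying $F$ to the factorisation $P \to A \vee B \to D$ yields $F(P) \to F(A \vee B) \to F(D)$ with the first leg in $\ce$ and the second in $\cm$, because $F$ sends $F^{-1}(\ce)$ into $\ce$ and $F^{-1}(\cm)$ into $\cm$ by definition. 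By uniqueness of factorisations this is exactly the $(\ce,\cm)$-factorisation computing the join, so $F(A \vee B) = F(A) \vee F(B)$.

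With joins shown to exist and to be preserved, the restriction $F : \cc_{F^{-1}(\cm)} \to \cd_\cm$ satisfies every hypothesis of Theorem~\ref{lift-base-monotonicity}: both categories have only monomorphisms (by assumption~(3), and since $\cm$ consists of monomorphisms) and binary joins of subobjects preserved by $F$, while $\nf$ satisfies base monotonicity, monotonicity and invariance. That theorem then yields base monotonicity for $F^{-1}(\nf)$, proving the ``in particular'' clause. The step I expect to be the main obstacle is the preservation argument: one must ensure that $F$ selects the \emph{correct} instance of the image multipushout rather than merely some instance, and it is precisely the uniqueness part of the multipushout universal property, combined with preservation via Corollary~\ref{multi}, that pins down $F(P)$ as the unique instance mapping to $F(D)$.
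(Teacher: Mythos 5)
Your proposal is correct and follows essentially the same route as the paper's proof: existence of joins via Lemma~\ref{computing-joins} with assumption~(4) supplying the span, preservation because $F$ preserves each ingredient of that construction ($(F^{-1}(\ce),F^{-1}(\cm))$-factorisations by definition and connected multicolimits by Corollary~\ref{multi}), and then Theorem~\ref{lift-base-monotonicity} for the ``in particular'' clause. The only difference is one of detail: the paper compresses the preservation step into a single sentence, whereas you correctly spell out the point it glosses over, namely that the uniqueness clause of the multipushout universal property forces $F(P)$ to be the instance admitting a morphism into $F(D)$.
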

\begin{proof}
The fact that $\cc_{F^{-1}(\cm)}$ and $\cd_\cm$ have binary joins follows from Lemma \ref{computing-joins}. Here we use assumption (4) to find the $C$ in that lemma for any cospan $A \to D \leftarrow B$ of morphisms in $F^{-1}(\cm)$ or $\cm$.

The fact that $F: \cc_{F^{-1}(\cm)} \to \cd_\cm$ preserves these binary joins then follows because $F$ preserves all the parts in the computation of a binary join. That is, $F$ sends a $(F^{-1}(\ce), F^{-1}(\cm))$-factorisation to a $(\ce, \cm)$-factorisation by definition, and left multiadjoints preserve connected multicolimits (see Corollary \ref{multi}).

The final claim follows immediately from Theorem \ref{lift-base-monotonicity}.
\end{proof}
The statement of Corollary \ref{left-multiadjoint-lifts-base-monotonicity} can be simplified by assuming $\cc$ to be locally multipresentable, which it often is in the intended applications. Then conditions (1) and (4) are automatic, as such categories are multicocomplete and have pullbacks. See also the example below.
\begin{exam}\label{lifting-base-monotonicity-difference-graphs}
Similarly to Example \ref{left-adjoint-examples}(2), we can consider the forgetful functor $U: \sigma\text{-}\Gra \to \Gra$, from the category of graphs with endomorphisms to the category of graphs, which are both locally presentable. As in \ref{left-adjoint-examples}(2), this functor is faithful and left adjoint. Consider the factorisation system $(\ce, \cm)$ = (surjections, graph em\-be\-ddings) in $\Gra$, and note that its pre-image under $U$ gives the same factorisation system in $\sigma\text{-}\Gra$. By Corollary \ref{left-multiadjoint-lifts-base-monotonicity} we thus have that the restriction $U: \sigma\text{-}\Gra_{\cm} \to \Gra_{\cm}$ preserves binary joins of subobjects.

We also note that the restriction $U: \sigma\text{-}\Gra_{\cm} \to \Gra_{\cm}$ does not admit $2$-completions, where the relevant independence relation on $\Gra_{\cm}$ will be the pullback squares (see also Example \ref{graphs-independent-horn-amalgamation}). Let $A = \{a_1, a_2\}$ be the $\sigma$-graph where the endomorphism swaps $a_1$ and $a_2$, and let $B = \{b_1, b_2\}$ be defined analogously. Let $D = \{a_1, a_2, b_1, b_2\}$, with an edge between $a_1$ and $b_1$. Then $A \ind^D_\emptyset B$ in $\Gra_{\cm}$. If $U$ were to admit $2$-completions then we would get a commuting diagram of graph embeddings like below
\[
\xymatrix@=2pc{
  U(A) \ar[r] \ar@{-->}@/^1.5pc/[rr] & D \ar@{-->}[r] & U(E) \\
  U(\emptyset) \ar[u] \ar[r] \ar@{}[ur]|-{\nf} & U(B) \ar[u] \ar@{-->}@/_1pc/[ur] &
}
\]
However, the endomorphism on $E$ would then have to send $a_1$ and $b_1$ to $a_2$ and $b_2$ respectively, which is impossible as there is an edge between $a_1$ and $b_1$ but not between $a_2$ and $b_2$.
\end{exam}

\section{Lifting everything}
\label{sec:lifting-everything}
\begin{defi}\label{reduct-functor}
A functor $F: \cc \to \cd$ between AECats is called a \emph{reduct functor} if it is faithful and preserves directed colimits.
\end{defi}
The intuition behind a reduct functor should be that of taking reducts, so it is some sort of forgetful functor. The forgetful functors in our motivating Examples \ref{motivating-examples} are of this form. More generally, when $\cc$ is a category of models of some theory $T$ in a signature $\cl$, then for any signature $\cl' \subseteq \cl$ and for any theory $T'$ in $\cl'$ with $T \models T'$, taking $\cl'$-reducts yields a reduct functor $F: \Mod(T) \to \Mod(T')$.
\begin{theo}\label{lift-everything}
Suppose that $F: \cc \to \cd$ is a reduct functor that admits $2$-completions. Let $\nf$ be an independence relation on $\cd$.
\begin{enumerate}
\item If $\nf$ is stable and $F$ reflects amalgamation of squares then $F^{-1}(\nf)$ is stable.
\item If $\nf$ is simple, $\cc$ and $\cd$ have binary joins of subobjects and $F$ preserves those joins and either $F$ has $\nf$-independent horn amalgamation or $F$ admits $3$-completions then $F^{-1}(\nf)$ is simple.
\item If $\nf$ is NSOP$_1$-like and $F$ has $\nf$-independent horn amalgamation then $F^{-1}(\nf)$ is NSOP$_1$-like.
\item If $\nf$ is NSOP$_1$-like with strong $3$-amalgamation and $F$ admits $3$-completions then $F^{-1}(\nf)$ is NSOP$_1$-like with strong $3$-amalgamation.
\end{enumerate}
\end{theo}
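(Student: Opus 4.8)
The statement is a synthesis theorem: for each of the four target notions I must verify that every property in its defining list (Definition \ref{stable-simple-nsop1-independence}, together with strong $3$-amalgamation where relevant) lifts to $F^{-1}(\nf)$, citing the appropriate result from the preceding sections. The plan is to exploit that a reduct functor between AECats is faithful, preserves directed colimits, and has domain and codomain that are accessible with directed colimits and only monomorphisms; this is exactly the standing setup needed for Proposition \ref{lifting-basic-properties}, Theorem \ref{lifting-union-accessibility} and Theorem \ref{admitting-2-completions-lifts-existence}. Thus in every case invariance, monotonicity, transitivity and symmetry lift by Proposition \ref{lifting-basic-properties}; accessibility and union lift by Theorem \ref{lifting-union-accessibility}; and existence lifts by Theorem \ref{admitting-2-completions-lifts-existence}(1), using the standing assumption that $F$ admits $2$-completions and that $\nf$ is invariant. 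What remains, case by case, is to lift (strong) $3$-amalgamation, base monotonicity and uniqueness.

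I would dispatch the direct cases first. For (3) the only remaining property is $3$-amalgamation, which lifts by the first half of Theorem \ref{independent-amalgamation-and-uniqueness} from the assumed $\nf$-independent horn amalgamation. For (4), strong $3$-amalgamation lifts by Theorem \ref{admitting-3-completions-lifts-strong-3-amalgamation}, and existence is still available since the standing $2$-completions hypothesis holds. For (2) I would lift base monotonicity by Theorem \ref{lift-base-monotonicity} (binary joins are preserved by hypothesis), and handle $3$-amalgamation by splitting on the disjunction: if $F$ has $\nf$-independent horn amalgamation, apply Theorem \ref{independent-amalgamation-and-uniqueness}; if instead $F$ admits $3$-completions, first note that a simple $\nf$ on $\cd$ (whose morphisms are monomorphisms and which has binary joins) satisfies strong $3$-amalgamation by Theorem \ref{strong-3-amal-from-3-amal}, then lift it by Theorem \ref{admitting-3-completions-lifts-strong-3-amalgamation}. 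Either way $F^{-1}(\nf)$ acquires $3$-amalgamation.

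Case (1) is where the real work lies, since it assumes neither binary joins (as needed for base monotonicity via Theorem \ref{lift-base-monotonicity}) nor $\nf$-independent horn amalgamation (as needed for $3$-amalgamation via Theorem \ref{independent-amalgamation-and-uniqueness}); it only adds that $F$ reflects amalgamation of squares. First I would lift uniqueness by Proposition \ref{reflecting-amalgamation-of-squares-lifts-uniqueness}, and then extract the two missing properties from uniqueness. For $3$-amalgamation I would show that uniqueness of $\nf$, together with reflection of amalgamation of squares and admitting $2$-completions, already forces $F$ to have $\nf$-independent horn amalgamation: given the data of Definition \ref{independent-galois-type-amalgamation}, take a $2$-completion $N \to F(N^*)$ and observe that each of the two squares to be amalgamated has $\nf$-independent images on both sides, one side being a face of the horn and the other obtained from the given independent square by invariance and monotonicity; uniqueness of $\nf$ then amalgamates these in $\cd$, and reflection of amalgamation of squares pulls the amalgamation back to $\cc$, after which Theorem \ref{independent-amalgamation-and-uniqueness} applies. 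For base monotonicity I would avoid joins entirely and instead give the categorical form of the classical fact that stationarity forces base monotonicity: given $A \nf^M_C D$ and $C \to B \to D$, use existence (already lifted) to build an amalgam $A \to A' \leftarrow B$ of $A$ and $B$ over $C$ and then an independent square $A' \nf^N_B D$, check via invariance, monotonicity, symmetry and transitivity that $A \nf^N_C D$ holds in $N$, and finally amalgamate the two independent squares $A \nf^M_C D$ and $A \nf^N_C D$ over their common base span by uniqueness; the resulting common extension $N'$ carries the desired $A' \nf^{N'}_B D$.

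The main obstacle is precisely this last observation: recognising that in the stable case uniqueness is strong enough to manufacture both base monotonicity and $3$-amalgamation, so the extra hypotheses of cases (2)--(4) may be traded for reflection of amalgamation of squares. The $3$-amalgamation argument reduces cleanly to Theorem \ref{independent-amalgamation-and-uniqueness}, but the base monotonicity derivation is the genuinely new piece of reasoning, and I expect the delicate point there to be verifying that all the triangles in the final uniqueness amalgamation commute, so that $A'$, $B$, $D$ and $M$ sit compatibly inside $N'$.
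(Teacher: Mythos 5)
Your proposal is correct, and for the common properties and cases (2)--(4) it is literally the paper's proof: same decomposition, same citations (Proposition \ref{lifting-basic-properties}, Theorem \ref{lifting-union-accessibility}, Theorem \ref{admitting-2-completions-lifts-existence}, then Theorem \ref{lift-base-monotonicity}, Theorem \ref{independent-amalgamation-and-uniqueness}, Theorem \ref{strong-3-amal-from-3-amal} plus Theorem \ref{admitting-3-completions-lifts-strong-3-amalgamation}, with the same case split on the disjunction in (2)). The genuine divergence is case (1). The paper simply asserts that ``for stable independence we only need to lift uniqueness'' and applies Proposition \ref{reflecting-amalgamation-of-squares-lifts-uniqueness}; it never addresses base monotonicity or $3$-amalgamation in that case, implicitly relying on the classical fact that an independence relation satisfying invariance, monotonicity, transitivity, existence and uniqueness automatically satisfies both (stationarity yields $3$-existence and base monotonicity). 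You, by contrast, noticed that the hypotheses of (1) feed neither Theorem \ref{lift-base-monotonicity} nor Theorem \ref{independent-amalgamation-and-uniqueness} directly, and you supplied the missing derivations: your base monotonicity argument is exactly the classical one, carried out entirely inside $\cc$ for the already-lifted relation (existence twice, transitivity, a uniqueness amalgamation, then invariance), and it is sound, including the commutativity checks you flag as delicate. Your $3$-amalgamation argument---proving that uniqueness of $\nf$, invariance, monotonicity, reflection of amalgamation of squares and $2$-completions force $\nf$-independent horn amalgamation, then citing Theorem \ref{independent-amalgamation-and-uniqueness}---also works (note that stable $\nf$ does satisfy $3$-amalgamation by Definition \ref{stable-simple-nsop1-independence}, as that theorem requires). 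It is, however, a detour: once existence and uniqueness are lifted, you can run the same kind of in-$\cc$ argument as for base monotonicity (use existence to realise $A \nf^N_M N_3$ over the span $A \leftarrow M \to N_3$, then monotonicity and two applications of uniqueness to glue in $N_1$ and $N_2$), with no reference to $F$ at all. So what your route buys is a self-contained justification of the step the paper leaves implicit; what the paper's route buys is brevity, at the cost of silently invoking a standard but unstated fact.
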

\begin{proof}
We put results together about which properties lift under which assumptions. The properties invariance, monotonicity, transitivity and symmetry all lift along any functor, by Proposition \ref{lifting-basic-properties}. Being a reduct functor allows us to apply Theorem \ref{lifting-union-accessibility} and conclude that the properties union and accessibility lift. The existence property lifts by Theorem \ref{admitting-2-completions-lifts-existence}, because $F$ admits $2$-completions.

Then for stable independence we only need to lift uniqueness, so we apply Proposition \ref{reflecting-amalgamation-of-squares-lifts-uniqueness}.

For simple independence we need to lift both $3$-amalgamation and base monotonicity. The latter follows from Theorem \ref{lift-base-monotonicity}. The former follows from Theorem \ref{independent-amalgamation-and-uniqueness} if $F$ has $\nf$-independent horn amalgamation. If $F$ admits $3$-completions then we note that by Theorem \ref{strong-3-amal-from-3-amal} $\nf$ satisfies strong $3$-amalgamation, and so we can apply Theorem \ref{admitting-3-completions-lifts-strong-3-amalgamation}.

Finally, for NSOP$_1$-like independence we only need to lift $3$-amalgamation. As before we either apply Theorem \ref{independent-amalgamation-and-uniqueness}, for case (3), or Theorem \ref{admitting-3-completions-lifts-strong-3-amalgamation}, for case (4).
\end{proof}
In the above proof for case (4) we needed to assume strong $3$-amalgamation, as opposed to case (2). This is because Theorem \ref{strong-3-amal-from-3-amal} no longer applies as it requires base monotonicity, which is exactly the differentiating property between 
simple and NSOP$_1$-like independence.
\begin{exam}
Theorem \ref{lift-everything} applies to both forgetful functors $\Bil_K \to \Vec_K$ and $\EAF \to \ACF$ from Examples \ref{motivating-examples}. For the former, case (2) applies and for the latter both (3) and (4) apply (strong $3$-amalgamation is implicit in \cite[Theorem 6.5]{HK}).
\end{exam}
\begin{theo}\label{left-multiadjoint-lifts-everything}
Let $F: \cc \to \cd$ be a faithful left multiadjoint and let $\cm$ be a left-cancellable composable accessible and continuous class of monomorphisms in $\cd$. Suppose that $\nf$ is an independence relation on $\cd_\cm$, that satisfies semi-invariance as an independence relation on $\cd$.
\begin{enumerate}
\item If $\nf$ is stable then $F^{-1}(\nf)$ is stable.
\item If $\nf$ is simple, $\cc_{F^{-1}(\cm)}$ and $\cd_\cm$ have binary joins of subobjects and $F$ preserves those then $F^{-1}(\nf)$ is simple.
\item If $\nf$ is NSOP$_1$-like then $F^{-1}(\nf)$ is NSOP$_1$-like, and furthermore if $\nf$ satisfies strong $3$-amalgamation then so does $F^{-1}(\nf)$.
\end{enumerate}
\end{theo}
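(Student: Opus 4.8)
The plan is to prove all three parts at once by assembling the single-property lifting results of the previous sections, applied throughout to the restricted functor $F\colon \cc_{F^{-1}(\cm)} \to \cd_\cm$. Recall from Definition \ref{stable-simple-nsop1-independence} that being NSOP$_1$-like means accessible plus invariance, monotonicity, transitivity, symmetry, existence, $3$-amalgamation and union; simple adds base monotonicity; and stable adds uniqueness. I would organise the properties into three groups according to how each is lifted.

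First, the diagrammatic properties come for free: by Proposition \ref{lifting-basic-properties} invariance, monotonicity, transitivity and symmetry pass from $\nf$ on $\cd_\cm$ to $F^{-1}(\nf)$ on $\cc_{F^{-1}(\cm)}$. Existence lifts by Theorem \ref{left-multiadjoint-lifts-existence} and $3$-amalgamation (in its strong form, when present) by Theorem \ref{left-multiadjoint-lifts-3-amalgamation}; these two steps are exactly where the hypothesis that $\nf$ satisfies semi-invariance as a relation on $\cd$ is consumed, and the strong case of the latter yields the strong $3$-amalgamation addendum in (3). For (1) I would further invoke Theorem \ref{left-multiadjoint-lifting} to lift uniqueness, using that $\cm$ is left-cancellable and composable. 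For (2) base monotonicity is lifted by Theorem \ref{lift-base-monotonicity}: here faithfulness of $F$ forces $F^{-1}(\cm)\subseteq\Mono(\cc)$ (faithful functors reflect monomorphisms), so $\cc_{F^{-1}(\cm)}$ and $\cd_\cm$ carry only monomorphisms, and the assumed binary joins preserved by $F$ supply the rest.

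The one property needing genuine work is the union/accessibility pair, governed by Theorem \ref{lifting-union-accessibility}, whose hypotheses must be verified for $F\colon \cc_{F^{-1}(\cm)} \to \cd_\cm$. The target is immediate: $\cm$ accessible gives $\cd_\cm$ accessible, and $\cm$ continuous gives $\cd_\cm$ closed under directed colimits in $\cd$, hence equipped with them. The crucial observation for the source is that
\[
\cc_{F^{-1}(\cm)} \;=\; \cc \times_{\cd} \cd_\cm
\]
is the pullback of $\cc \xrightarrow{F} \cd \xleftarrow{\iota} \cd_\cm$: its objects are those of $\cc$ and its morphisms are exactly the $f$ with $F(f)\in\cm$. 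Since $\cm$ is composable it contains all isomorphisms, so $\iota$ is an isofibration and this strict pullback is a pseudopullback. Moreover $\iota$ preserves directed colimits by continuity, and $F$ preserves directed colimits because it is a left multiadjoint (directed colimits are connected, hence preserved, by Corollary \ref{multi}; cf.\ Lemma \ref{left-multiadjoint-on-cocones}); both functors are thus accessible. Invoking the closure of accessible categories with directed colimits under pseudopullbacks along accessible functors---the same fact used in the proof of Theorem \ref{lifting-union-accessibility}---we conclude that $\cc_{F^{-1}(\cm)}$ is accessible and has directed colimits, computed as in $\cc$. The restriction of $F$ then preserves directed colimits by construction, so Theorem \ref{lifting-union-accessibility} lifts union and then accessibility.

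The main obstacle, and the only input beyond citing earlier sections, is precisely this last step: recognising $\cc_{F^{-1}(\cm)}$ as a pseudopullback so that the accessibility machinery applies, and checking that $F$ and $\iota$ are accessible. Everything else is bookkeeping—matching each clause of Definition \ref{stable-simple-nsop1-independence} to the theorem that lifts it and confirming the standing hypotheses (semi-invariance for existence and $3$-amalgamation, preserved joins and faithfulness for base monotonicity, left-cancellability for uniqueness). I would finish by collecting, for each of (1)--(3), exactly the clauses above and concluding that $F^{-1}(\nf)$ is stable, simple, or NSOP$_1$-like respectively.
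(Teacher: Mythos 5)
Your proof is correct and follows essentially the same route as the paper's: it assembles Proposition \ref{lifting-basic-properties}, Theorems \ref{left-multiadjoint-lifts-existence}, \ref{left-multiadjoint-lifts-3-amalgamation}, \ref{left-multiadjoint-lifting} and \ref{lift-base-monotonicity}, and applies Theorem \ref{lifting-union-accessibility} to the restricted functor $F: \cc_{F^{-1}(\cm)} \to \cd_\cm$, using Corollary \ref{multi} together with continuity of $\cm$ for preservation of directed colimits and faithfulness of $F$ for $F^{-1}(\cm) \subseteq \Mono$. The only difference is that you make explicit, via the pseudopullback $\cc \times_\cd \cd_\cm$ along the isofibration $\cd_\cm \hookrightarrow \cd$, why $\cc_{F^{-1}(\cm)}$ is itself accessible with directed colimits---a hypothesis of Theorem \ref{lifting-union-accessibility} that the paper's proof leaves implicit---which is a welcome extra verification rather than a deviation.
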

\begin{proof}
We put results together about which properties lift under which assumptions. The properties invariance, monotonicity, transitivity and symmetry all lift along any functor, by Proposition \ref{lifting-basic-properties}. Left multiadjoints preserve connected colimits (Corollary \ref{multi}), and so in particular $F: \cc \to \cd$ preserves directed colimits. As $\cm$ is continuous, this implies that $F^{-1}(\cm)$ is continuous and so $F: \cc_{F^{-1}(\cm)} \to \cd_\cm$ preserves directed colimits. We can thus apply Theorem \ref{lifting-union-accessibility} to this restricted functor to see that union and accessibility are lifted. Existence is lifted by Theorem \ref{left-multiadjoint-lifts-existence}.

For stability, uniqueness is lifted by Theorem \ref{left-multiadjoint-lifting}. Then for both simplicity and NSOP$_1$-like independence, (strong) $3$-amalgamation is lifted by Theorem \ref{left-multiadjoint-lifts-3-amalgamation}.

That leaves us to show that base monotonicity lifts in case (2). For that, we wish to apply Theorem \ref{lift-base-monotonicity}, which requires $F^{-1}(\cm)$ to consist of monomorphisms. This follows quickly from the fact that $\cm$ consists of monomorphisms and that $F$ is faithful.
\end{proof}

\begin{theo}\label{left-multiadjoint-lifts-simplicity}
Let $F: \cc \to \cd$ be a faithful left multiadjoint between locally multipresentable categories, and let $(\ce, \cm)$ be a factorisation system on $\cd$ with $\cm$ an accessible and continuous class of monomorphisms. Suppose that $(F^{-1}(\ce), F^{-1}(\cm))$ is a factorisation system on $\cc$. If $\nf$ is a simple independence relation on $\cd_\cm$, satisfying semi-invariance as an independence relation on $\cd$, then $F^{-1}(\nf)$ is a simple independence relation on $\cc_{F^{-1}(\cm)}$.
\end{theo}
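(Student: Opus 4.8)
The plan is to reduce everything to Theorem~\ref{left-multiadjoint-lifts-everything}(2), whose conclusion is exactly that $F^{-1}(\nf)$ is simple. The only difference between the present statement and that theorem is that here the hypotheses about binary joins are not assumed directly, but are instead to be extracted from the factorisation-system setup via Corollary~\ref{left-multiadjoint-lifts-base-monotonicity}.

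First I would check that $\cm$ satisfies the hypotheses demanded of it in Theorem~\ref{left-multiadjoint-lifts-everything}. By assumption $\cm$ is an accessible and continuous class of monomorphisms, so it remains to see that $\cm$ is composable and left-cancellable. This is automatic: $\cm$ is the right class of a factorisation system, and such classes are always composable and left-cancellable by \cite[Proposition~2.1.1]{FK}. Thus Theorem~\ref{left-multiadjoint-lifts-everything} applies, and since $\nf$ is simple, it remains only to produce the extra hypotheses of its case~(2), namely that $\cc_{F^{-1}(\cm)}$ and $\cd_\cm$ have binary joins of subobjects and that $F$ preserves them.

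These are precisely the conclusions of Corollary~\ref{left-multiadjoint-lifts-base-monotonicity}, so I would verify its four numbered hypotheses. Condition~(2), that $(F^{-1}(\ce), F^{-1}(\cm))$ is a factorisation system on $\cc$, is part of our assumptions. Condition~(3), that $F^{-1}(\cm)$ consists of monomorphisms, follows immediately from $\cm$ consisting of monomorphisms together with the faithfulness of $F$. The remaining conditions~(1) and~(4) are where local multipresentability enters: as noted in the remark following Corollary~\ref{left-multiadjoint-lifts-base-monotonicity}, locally multipresentable categories are multicocomplete and have pullbacks, so both $\cc$ and $\cd$ admit multipushouts (giving~(1)) and satisfy the right Ore condition with respect to any cospan (giving~(4)). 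With all four conditions in hand, Corollary~\ref{left-multiadjoint-lifts-base-monotonicity} yields the binary joins and their preservation by $F$.

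Putting these together, Theorem~\ref{left-multiadjoint-lifts-everything}(2) gives that $F^{-1}(\nf)$ is simple, as desired. There is no genuine obstacle here beyond careful hypothesis-checking: the substantive work was already carried out in the proofs of Theorem~\ref{left-multiadjoint-lifts-everything} and Corollary~\ref{left-multiadjoint-lifts-base-monotonicity}. The one point that requires a moment's thought is confirming that local multipresentability supplies exactly the multipushouts and pullbacks needed for conditions~(1) and~(4), which the aforementioned remark addresses.
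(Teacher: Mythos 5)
Your proposal is correct and matches the paper's proof in essence: the paper likewise reduces to Theorem~\ref{left-multiadjoint-lifts-everything}, substituting Corollary~\ref{left-multiadjoint-lifts-base-monotonicity} for the base-monotonicity step, with conditions (1) and (4) automatic from local multipresentability, (2) assumed, and (3) from faithfulness. Your only (harmless) deviation is applying Theorem~\ref{left-multiadjoint-lifts-everything}(2) as a black box via the joins-existence part of the corollary rather than re-running its proof, and you make explicit the verification---left implicit in the paper---that $\cm$, being the right class of a factorisation system, is composable and left-cancellable.
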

\begin{proof}
We follow the proof of Theorem \ref{left-multiadjoint-lifts-everything}, except we apply Corollary \ref{left-multiadjoint-lifts-base-monotonicity} instead to lift base monotonicity. This applies because assumptions (1) and (4) there are automatic for locally multipresentable categories and we assumed (2) to hold. Assumption (3), namely that $F^{-1}(\cm)$ consists of monomorphisms, was also shown to hold in the proof of Theorem \ref{left-multiadjoint-lifts-everything}.
\end{proof}
\begin{exam}
\label{connected-graphs}
We give an example, similar to Example \ref{left-multiadjoint-examples}(2), that uses all the components of Theorem \ref{left-multiadjoint-lifts-simplicity}. Let $\cc$ be the category of connected graphs with graph homomorphisms. Let $\cd = \Gra$ and let $(\ce, \cm) = (\text{surjections}, \text{embeddings})$. Then $F$ is a left multiadjoint. For a graph $G$, the family of connected components and their inclusions into $G$ witness that $F$ is left multiadjoint. Furthermore, $\cc$ is easily seen to be accessible and have all connected limits, so $\cc$ is locally multipresentable. Furthermore, $(F^{-1}(\ce), F^{-1}(\cm)) = (\text{surjections}, \text{embeddings})$. Let $\nf$ be the independence relation on $\Gra_{\Emb}$ given by pullback squares. Then $\nf$ is a semi-invariant simple independence relation (semi-invariance is quickly verified and simplicity can for example be found in \cite[Theorem 1.2 and Example 4.11]{KR}). So Theorem \ref{left-multiadjoint-lifts-simplicity} applies and we conclude that the pullback squares form a simple independence relation on $\cc_{\Emb}$. Finally, we note that $F$ is not a left adjoint for the same reasons as in Example \ref{left-multiadjoint-examples}(2).
\end{exam}
\begin{exam}\label{link-to-previous-work}
We now discuss a link to related work in \cite{KR}, where a category-theoretic construction of independence relations is given. We recall a simplified version of the main results in \cite{KR}, where we focus on regular monomorphisms rather than an arbitrary class $\cm$ of monomorphisms, but the current work applies to that generality as well.

Suppose that $\cc$ is a locally finitely presentable category and that the class $\Reg$ of regular monomorphisms is closed under composition. Then \cite[Theorem 1.1]{KR} states that if $\Reg$ is cubic (see \cite[Definition 3.8]{KR}) then the pullback squares form an NSOP$_1$-like independence relation on $\cc_{\Reg}$. Further conditions are given for when this independence relation is simple (see \cite[Theorem 1.2]{KR}) or stable (see \cite[Corollary 4.9]{KR} and \cite[Theorem 3.1]{LRV1}).

Let $M$ be any monoid and consider the category $\cc^M$ of objects in $\cc$ with an $M$-action, which is locally presentable. There is the obvious forgetful functor $U: \cc^M \to \cc$. This functor is faithful and preserves both limits and colimits, so it has both adjoints (by \cite[Theorems 1.58 and 1.66]{AR} and the Special Adjoint Functor Theorem). It also reflects limits (in fact, it is monadic), so $U^{-1}(\Reg)$ is the class of regular monomorphisms in $\cc^M$. Furthermore, regular monomorphisms are always continuous and accessible (see the proof of \cite[Lemma 4.3]{LRV}). They are are also part of a factorisation system $(\ce, \Reg)$, because they are composable (in essence this is \cite{Kelly}, and \cite[Fact 4.2]{KR} gives directions on how to put it in the present terms). Here $\ce$ is the class of epimorphisms, which $U$ preserves and reflects. In conclusion, Theorems \ref{left-multiadjoint-lifts-everything} and \ref{left-multiadjoint-lifts-simplicity} apply to $U$. The special case
$\Set^M$ is considered in \cite{MR}.

So if $\Reg$ is cubic then the pullback squares form an NSOP$_1$-like independence relation on $\cc_{\Reg}$, which is easily checked to satisfy semi-invariance as an independence relation on $\cc$. Then $U^{-1}(\ind)$, i.e.\ the pullback squares in $\cc^M_{\Reg}$, form an NSOP$_1$-like independence relation on $\cc^M_{\Reg}$. Furthermore, if $\ind$ is simple or stable then $U^{-1}(\ind)$ is simple or stable respectively. Note that Example \ref{left-adjoint-examples}(2) is a special case of this.
\end{exam}

\noindent \textbf{Acknowledgements.} We thank the anonymous referees for their feedback, which has greatly helped to improve the presentation of this paper.\\


\begin{wrapfigure}{r}{0.4\textwidth}
\includegraphics[height=1.1cm]{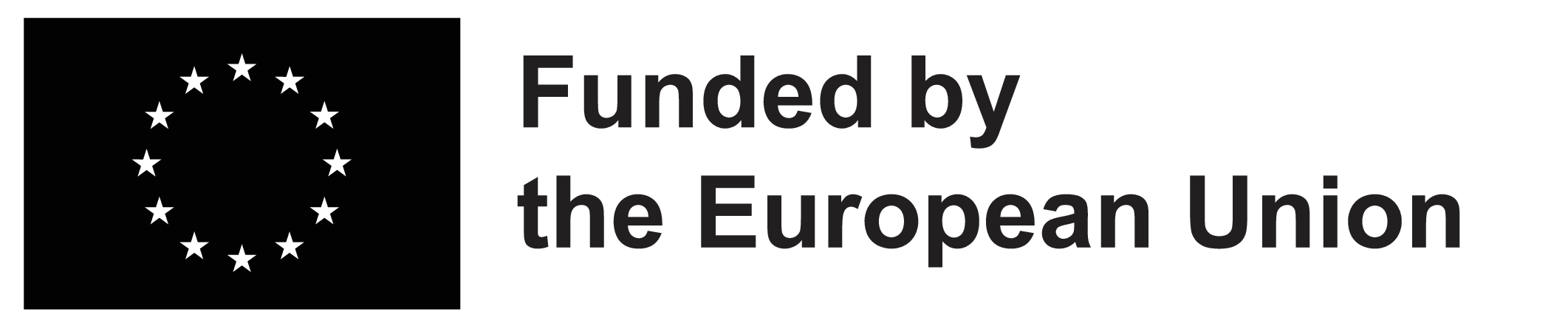}
\end{wrapfigure}
\noindent \textbf{Funding.} The first author is supported by the EPSRC grant EP/X018997/1 and Marie Sk\l{}odowska-Curie grant number 101130801 and the second author is supported by the Grant Agency of the Czech Republic under the grant 22-02964S.\\




\begin{thebibliography}{ABCDE}

\bibitem{AR} J. Ad\'amek and J. Rosick\'y, {\em Locally Presentable and Accessible Categories}, Cambridge Univ. Press (1994).

\bibitem{AHKK} V. Aslanyan, R. Henderson, M. Kamsma and J. Kirby, {\em Independence relations for exponential fields}, Annals of Pure and Applied Logic 174 (2023).

\bibitem{BR} T. Beke and J. Rosick\'{y}, {\em Abstract elementary classes and accessible categories}, Annals of Pure and Applied Logic 163 (2008).

\bibitem{BY} I. Ben-Yaacov, {\em Positive model theory and compact abstract theories}, Journal of Mathematical Logic (2003), 85-118.

\bibitem{BYBHU} I. Ben-Yaacov, A. Berenstein, C.W. Henson, A. Usvyatsov, {\em Model theory for metric structures} in Model Theory with Applications to Algebra and Analysis, Cambridge University Press (2008).

\bibitem{BGKV} W. Boney, R. Grossberg, A. Kolesnikov and S. Vasey, {\em Canonical forking in AECs}, Annals of Pure and Applied Logic 167 (2016), 590-613.

\bibitem{DK} J. Dobrowolski and M. Kamsma, {\em Kim-independence in positive logic}, Model Theory 1 (2022), 55-113.

\bibitem{FK} P. J. Freyd and G. M. Kelly, {\em Categories of continuous functors}, I. J. Pure Appl. Algebra (1972), 169-191.

\bibitem{HH} V. Harnik and L. Harrington, {\em Fundamentals of forking}, Annals of Pure and Applied Logic 26 (1984), 245-286.

\bibitem{HK} L. Haykazyan and J. Kirby, {\em Existentially closed exponential fields},  Israel Journal of Mathematics 241 (2021), 89-117.

\bibitem{HytKes} T. Hyttinen and M. Kes\"{a}l\"{a}, {\em Categoricity transfer in simple finitary abstract elementary classes}, The Journal of Symbolic Logic 76 (2011), 759-806.

\bibitem{JS} A. Joyal and R. Street, {\em Pullbacks equivalent to pseudopullbacks}, Cahiers de Topologie et G\'{e}om\'{e}trie Diff\'{e}rentielle Cat\'{e}goriques 34 (1993), 153-156.

\bibitem{K} M. Kamsma, {\em The Kim-Pillay theorem for Abstract Elementary Categories},  Journal of Symbolic Logic 85 (2020), 1717-1741.

\bibitem{K1} M. Kamsma, {\em NSOP$_1$-like independence in AECats}, Journal of Symbolic Logic (2022).

\bibitem{K2} M. Kamsma, {\em Bilinear spaces over a fixed field are simple unstable}, Annals of Pure and Applied Logic 174 (2023).

\bibitem{KR} M. Kamsma and J. Rosick\'y, {\em Unstable independence from the categorical point of view}, Annals of Pure and Applied Logic 176 (2025).

\bibitem{KapRam} I. Kaplan and N. Ramsey, {\em On Kim-independence}, Journal of the European Mathematical Society 22 (2020), 1423-1474.

\bibitem{Kelly} G.M. Kelly, {\em Monomorphisms, Epimorphisms, and Pull-Backs}, Journal of the Australian Mathematical Society 9 (1969), 124-142.

\bibitem{KP} B. Kim and A. Pillay, {\em Simple Theories}, Annals of Pure and Applied Logic 88 (1997), 149-164.

\bibitem{LRV} M. Lieberman, J. Rosick\'y and S. Vasey, {\em Forking independence from the categorical point of view}, Advances in Mathematics 346 (2019), 719-772.

\bibitem{LRV1} M. Lieberman, J. Rosick\'y and S. Vasey, {\em Cellular categories and stable independence}, The Journal of Symbolic Logic 88 (2023), 811-834.

\bibitem{LRV2} M. Lieberman, J. Rosick\'y and S. Vasey, {\em Induced and higher-dimensional stable independence}, Annals of Pure and Applied Logic 173 (2022), 103124.

\bibitem{M} M. Makkai, {\em A survey of basic stability theory, with particular emphasis on orthogonality and regular types}, Israel Journal of Mathematics 49 (1984), 181-238.

\bibitem{MP} M. Makkai and R. Par\' e, {\em Accessible Categories: The Foundations of Categorical Model Theory}, AMS 1989.

\bibitem{MR} M. Mazari-Armida and J. Rosick\'y, {\em On the abstract elementary class of acts with embeddings}, in preparation.

\bibitem{Sh} S. Shelah, {\em Classification Theory and the Number of Non-isomorphic Models}, North-Holland 1978.

\bibitem{ShAEC} S. Shelah, {\em Classification Theory for Abstract Elementary Classes}, College Publications (2009).

\bibitem{V} S. Vasey, {\em Building independence relations in abstract elementary classes}, Annals of Pure and Applied Logic 167 (2016), 1029-1092.
\end{thebibliography}
\end{document}